\tikzset{node distance=2cm, auto}
\def\edge{\ar@{-}}
\def\dedge{\ar@{.}}
\newtheorem{theorem}{Theorem}[section]
\newtheorem{proposition}[theorem]{Proposition}
\newtheorem{lemma}[theorem]{Lemma}
\newtheorem{corollary}[theorem]{Corollary}
\theoremstyle{definition}
\newtheorem{remark}[theorem]{Remark}
\newtheorem{notation}[theorem]{Notation}
\def\k{{\mathbb K}}
\def\spec{{\rm Spec}}
\def\hspec{\ch\!-\!\spec} 
\def\ch{{\mathcal H}}
\def\cR{{\mathcal R}}
\newcommand{\N}{\mathbb{N}}
\newcommand{\zbar}{Z}
\newcommand{\height}{{\rm ht}}
\newcommand{\gkdim}{{\rm GKdim}}
\newcommand{\der}{{\rm Der}}
\def\uq{U_q^+(G_2)}
\title{Derivations of a family of quantum second Weyl algebras} 
\author{S Launois and~ I Oppong}
\begin{document} 
\maketitle
\begin{abstract} In view of a well-known theorem of Dixmier, its is natural to consider primitive quotients of $U_q^+(\mathfrak{g})$ as quantum analogues of Weyl algebras. In this work, we study primitive quotients of $\uq$ and compute their Lie algebra of derivations. 
\end{abstract}



\section{Introduction} 

Weyl algebras have been extensively studied in the last 60 years due to their link to Lie theory, differential operators, quantum mechanics, etc. One of the main questions remaining is the famous Dixmier Conjecture that asserts that every endomorphism of a complex Weyl algebra is an automorphism. 

Let $\k$ be a field and  $q$ be a non-zero element of $\k$ that is not a root of unity. The aim of this article is to produce quantum analogues of the second Weyl algebra and to compare their properties to those of the second Weyl algebra. There exist in the literature various families of ``quantum Weyl algebras'', e.g. the so-called quantum Weyl algebras and generalised Weyl algebras (GWA for short). Most of the time, they are obtained by generators and relations through a deformation of the classical defining relation of the first Weyl algebra: $xy-yx=1$. 

To produce potential quantisations, we take a different approach in this article. Our inspiration comes from a Theorem of Dixmier (see, for instance, \cite[Th\'eor\`eme 4.7.9]{dix}) that asserts that primitive quotients of enveloping algebras of complex nilpotent Lie algebras are isomorphic to Weyl algebras.  

We have at hand a quantum analogue of at least some enveloping algebras of complex nilpotent Lie algebras, namely the positive part $U_q^+(\mathfrak{g})$ of a quantised enveloping algebra $U_q^+(\mathfrak{g})$ of a complex simple Lie algebra $\mathfrak{g}$. As a consequence, it is natural to consider primitive quotients of $U_q^+(\mathfrak{g})$ as quantum analogues of Weyl algebras. In the $A_2$ and $B_2$ cases, primitive ideals of $U_q^+(\mathfrak{g})$ have been classified and it turns out that in the $B_2$ case, some of the resulting primitive quotients provide `nice' quantum analogues of the first Weyl algebra. For instance, they are simple---this is not the case of quantum Weyl algebras---and do not possess non-trivial units---this is not the case of a quantum GWA over a Laurent polynomial ring. (See \cite{sl} for details.) 

The present article is concerned with the $G_2$ case. More precisely, we identify a family of primitive ideals of $\uq$ and then proceed in proving that the corresponding primitive quotients have (at least for some choice of the parameters) properties similar to those of the second Weyl algebra. More precisely, the center of $\uq$ is a polynomial algebra in two variables $\k[\Omega_1,\Omega_2]$ and we prove that the quotient algebra 
$$A_{\alpha,\beta}:= \uq / \langle \Omega_1 - \alpha, \Omega_2 - \beta\rangle$$
is simple for all $(\alpha , \beta)\neq (0,0)$. We then proceed and study these quotient algebras. In particular, we show that $A_{\alpha,\beta}$ has the same (Gelfand-Kirillov) dimension as the second Weyl algebra $A_2(\k)$. We also establish that for certain choice of the parameters $\alpha$ and $\beta$, the algebra $A_{\alpha,\beta}$ is a deformation of a quadratic extension of $A_2(\k)$ at $q=1$.  

In the final section, we compute the derivations of $A_{\alpha,\beta}$. Our results show that when $\alpha$ and $\beta$ are both non-zero, all derivations of $A_{\alpha,\beta}$ are inner, a property that is well known to hold in $A_2(\k)$. 

In view of the celebrated Dixmier Conjecture, it would be interesting to describe automorphisms and endomorphisms of $A_{\alpha,\beta}$ when $\alpha$ and $\beta$ are both non-zero. We intend to come back to these questions in the future. 

This article is organized as follows. In Section 2, we recall the presentation of $\uq$ as a so-called quantum nilpotent algebra (QNA for short). This allows the use of two different tools to study the prime and primitive spectra of $\uq$: the $\ch$-stratification theory of Goodearl and Letzter, and the Deleting Derivation Theory of Cauchon. We recall both theories in the context of $\uq$ in Section 2. In Section 3, we use these two theories to establish that $\langle \Omega_1 - \alpha, \Omega_2 - \beta\rangle$ is a maximal ideal of $\uq$ when $(\alpha, \beta) \neq (0,0)$. 

In Section 4, we focus on comparing $A_{\alpha,\beta}$ with the second Weyl algebra $A_2(\k)$. In particular, we show that both have Gelfand-Kirillov dimension equal to 4. Through a direct computation, we also establish that $A_{1,\frac{1}{9(q^6-1)}}$ is a quadratic extension of $A_2(\k)$ at $q=1$. In this section we also compute a linear basis for $A_{\alpha,\beta}$. 

In the final section, we compute the derivations of $A_{\alpha,\beta}$. Our strategy here is to make use of the following tower of algebras arising from the Deleting Derivation Algorithm:
$$A_{\alpha,\beta}\subset \cR_6=\cR_7\Sigma_6^{-1}\subset \cR_5=\cR_6\Sigma_5^{-1}\subset \cR_4=\cR_5\Sigma_4^{-1} \subset \cR_3.$$
The later algebra $\cR_3$ is a simple quantum torus whose derivations have been described by Osborn and Passman in \cite{op}. We pull back their description to obtain a description of the derivations of $A_{\alpha,\beta}$ through a step-by-step process consisting in ``reverting'' the Deleting Derivation Algorithm. Our results show that when $\alpha$ or $\beta$ is equal to zero, then the first Hochschild cohomology group of $A_{\alpha,\beta}$ is a 1-dimension vector space, whereas when both $\alpha$ and $\beta$ are non-zero, all derivations are inner.


\section{The quantum nilpotent algebra $\uq$ and its primitive ideals}


\subsection{The quantum nilpotent algebra $\uq$}

Let $\k$ be a field and  $q$ be a non-zero element of $\k$ that is not a root of unity. 
The algebra of $\uq$ is the so-called positive part of the quantum enveloping algebra $U_q(\mathfrak{g})$ of a Lie algebra $\mathfrak{g}$ of type $G_2$. It is well known, see for instance \cite{bg}, that this algebra is generated over $\k$ by two indeterminates $E_{\alpha}$ and $E_{\beta}$ subject to the following quantum Serre relations: 
\begin{align*}
(S1) \ \ E_{\alpha}^4E_{\beta}&- \left[\begin{smallmatrix} 4 \\ \\ 1 \end{smallmatrix}\right]_q E_{\alpha}^3E_{\beta}E_{\alpha}+\left[\begin{smallmatrix} 4 \\ \\ 2 \end{smallmatrix}\right]_q E_{\alpha}^2E_{\beta}E_{\alpha}^2-\left[\begin{smallmatrix} 4 \\ \\ 1 \end{smallmatrix}\right]_q  E_{\alpha}E_{\beta}E_{\alpha}^3+E_{\beta}E_{\alpha}^4=0,\\
(S2) \ \ E_{\beta}^2E_{\alpha}&-\left[\begin{smallmatrix} 2 \\ \\ 1 \end{smallmatrix}\right]_{q^3} E_{\beta}E_{\alpha}E_{\beta}+E_{\alpha}E_{\beta}^2=0,\\
\end{align*}
where $\left[\begin{smallmatrix} n \\ \\ i \end{smallmatrix}\right]_{z}$ denotes the quantum binomial coefficients (see \cite[I.6.1]{bg}).

One can construct a PBW-basis of $\uq$ using the so-called Lusztig automorphisms of $U_q(\mathfrak{g})$, see for instance \cite[I.6.8]{bg}. In the present case, such a basis was computed by De Graaf in \cite{degraaf1}. We will use the convention of that paper, but with $E_1:=E_{\alpha}, \ E_2:=E_{3\alpha+\beta}, \ E_3:=E_{2\alpha+\beta}, \ E_4:=E_{3\alpha+2\beta}, \ E_5:=E_{\alpha+\beta}$ and $E_6:=E_{\beta}$. 

With these notations, the defining relations of $U_q^+(G_2)$ are as follows:
\begin{align*}
E_2E_1&=q^{-3}E_1E_2& E_3E_1&=q^{-1}E_1E_3-(q+q^{-1}+q^{-3})E_2\\
E_3E_2&=q^{-3}E_2E_3&E_4E_1&=E_1E_4+(1-q^2)E_3^2\\
E_4E_2&=q^{-3}E_2E_4-\frac{q^4-2q^2+1}{q^4+q^2+1}E_3^3&E_4E_3&=q^{-3}E_3E_4
\\
E_5E_1&=qE_1E_5-(1+q^2)E_3&E_5E_2&=E_2E_5+(1-q^2)E_3^2\\
E_5E_3&=q^{-1}E_3E_5-(q+q^{-1}+q^{-3})E_4&E_5E_4&=q^{-3}E_4E_5\\
E_6E_1&=q^3E_1E_6-q^3E_5&E_6E_2&=q^3E_2E_6+(q^4+q^2-1)E_4+(q^2-q^4)E_3E_5\\
 E_6E_3&=E_3E_6+(1-q^2)E_5^2  &E_6E_5&=q^{-3}E_5E_6\\
E_6E_4&=q^{-3}E_4E_6-\frac{q^4-2q^2+1}{q^4+q^2+1}E_5^3, 
\end{align*}
and the monomials $E_1^{k_1} \dots E_6^{k_6}$ ($k_1, \dots ,k_6 \in \N $) form a basis of $\uq$ over $\k$. 

Even better, one may write $\uq$ as a Quantum Nilpotent Algebra (QNA for short) or Cauchon-Goodearl-Letzter extension in the sense of \cite[Definition 3.1]{llr-cgl}, by adjoining the generators $E_i$ in lexicographic order. This means in particular that $\uq$ can be presented as an iterated Ore extension:
\[
\uq=\k[E_{1}][E_{2};\sigma_{2},\delta_{2}]\cdots [E_{6};\sigma_{6},\delta_{6}],
\]
where the $\sigma_{i}$ are automorphisms and the $\delta_{i}$ are left $\sigma_{i}$-derivations of the appropriate subalgebras. We would not need the precise definition of a QNA for what follows, but it is worth reminding the reader of the algebraic torus action involved in writing $\uq$ as a QNA.

The algebraic torus $\ch=(\k^\times)^{2}$ acts by automorphisms on 
$\uq$ as follows:
\[
h\cdot E_i=h_i E_{i} \mbox{ for all } i \in \{1,6\} \mbox{ and } h=(h_1,h_6) \in \ch .
\]
Note that the action of the automorphism $h$ on the generators $E_2, \dots , E_5$ follows from the above defining relations. 

By \cite[Theorem II.2.7]{bg}, the action of $\ch$ on $\uq$ is rational in the sense of \cite[Definition II.2.6]{bg}. 

A consequence of the QNA condition is that important tools such as Cauchon's deleting derivations procedure and the Goodearl-Letzter stratification theory 
(this is the origin of the CGL terminology, see \cite{llr-cgl}) are available to study prime and primitive ideals. These ideas will be introduced in following sections. At the moment, we merely note that it is immediate that $\uq$ is a noetherian domain and that all prime ideals are completely prime (in the case of $\uq$, it was proved in \cite[Section 5]{picp}). We denote by $F_q$ its skew-field of fractions, i.e. $F_q:= \mathrm{Frac}(\uq)$.


\subsection{Prime ideals in $\uq$ and $\ch$-stratification}

A two-sided ideal $I$ of $\uq$ is said to be {\em $\ch$-invariant} if $h\cdot I=I$ for all
$h \in \ch$.  An {\em $\ch$-prime ideal} of $\uq$ is a proper $\ch$-invariant ideal
$J$ of $\uq$ such that if  $J$ contains the product of two
$\ch$-invariant ideals of $\uq$ then $J$ contains at least one of them. We denote
by $\ch$-$\spec(\uq)$ the set of all $\ch$-prime ideals of $\uq$. Observe
that if $P$ is a prime ideal of $\uq$ then
\begin{equation}
(P:\ch)\ := \ \bigcap_{h\in \ch} h\cdot P
\end{equation}
 is an $\ch$-prime ideal of $\uq$. Indeed, let $J$ be an $\ch$-prime ideal of $\uq$. We denote
by $\spec_J (\uq)$ the {\em $\ch$-stratum} associated  to $J$; that is,  
\begin{equation}
\spec_J (\uq)=\{ P \in \spec(\uq) \mbox{ $\mid$ } (P:\ch)=J \}.
\end{equation}
Then the $\ch$-strata of $\spec(\uq)$ form a partition of $\spec(\uq)$
\cite[Chapter II.2]{bg}; that
is,
\begin{equation}
\label{eq:Hstratification}
 \spec(\uq)= \bigsqcup_{J \in \ch\mbox{-}\spec(\uq)}\spec_J(\uq).
 \end{equation}
This partition is the so-called {\em $\ch$-stratification} of $\spec(\uq)$.

It follows from the work of Goodearl and Letzter \cite{gz2} that every $\ch$-prime ideal of $\uq$ is completely prime, so $\ch$-$\spec(\uq)$ coincides with the set of $\ch$-invariant completely prime ideals of $\uq$. Moreover there are precisely $|W|$ $\ch$-prime ideals in $\uq$, where $W$ denotes the Weyl group of type $G_2$ (see \cite[Remark 6.2.2]{meriaux}). As a consequence,  the $\ch$-stratification of $\spec ( \uq ) $ is finite and so the full strength of the $\ch$-stratification theory of Goodearl and Letzter is available to study $\spec (\uq)$. 

For each $\ch$-prime ideal $J$ of $\uq$, the space $\spec_J(\uq)$ is homeomorphic to the prime spectrum $\spec (\k[z_1^{\pm 1},\ldots ,z_d^{\pm 1}])$ of a commutative Laurent polynomial ring whose dimension depends on $J$, \cite[Theorems II.2.13 and II.6.4]{bg}. These dimensions were computed in \cite{bcl,yak}. Finally, let us mention that the primitive ideals of $A$ are precisely the prime ideals that are maximal in their $\ch$-strata \cite[Theorem II.8.4]{bg}.

In this article, we will mainly focus on one specific $\ch$-stratum. Since $\uq$ is a domain, $0$ (technically, $\langle0\rangle$) is clearly an $\ch$-invariant completely prime ideal of $\uq$, and so an $\ch$-prime, and we will focus on computing its stratum, the so-called $0$-stratum. The motivation here is twofold: first, in the $B_2$ case, we obtain ``new'' quantum deformation of the first Weyl algebra as $U_q^+(B_2) / P$, where $P$ is a primitive ideal from the $0$-stratum of $\spec  (U_q^+(B_2))$ \cite{sl}. Next, in the present case, we would like to construct algebras of GK dimension 4 as explained in the introduction. Since Tauvel's height formula holds in $\uq$ \cite{gll}, we need to quotient $\uq$ by a primitive ideals of height 2. Given that the $\ch$-spectrum of $\uq$ is homeomorphic to the Weyl group of type $G_2$, such primitive ideals can only be found in the $0$-stratum and the strata associated to one of the two height 1 $\ch$-primes. In this article, we mainly present results for the $0$-stratum, but we will also indicate results obtained for the primitive quotients coming from the height 1 $\ch$-prime strata.


\subsection{Deleting derivations algorithms in $\uq$}

As $\uq$ is a QNA, we can apply Cauchon's Deleting Derivation Algorithm to study its prime spectrum. 

Recall first that $\uq$ is an itereated Ore extension of the form:
$$U_q^+(G_2)=\k[E_1][E_2;\sigma_2][E_3;\sigma_3,\delta_3][E_4;\sigma_4,\delta_4][E_5;\sigma_5,\delta_5][E_6;\sigma_6,\delta_6];$$ where, $\sigma_2$ denotes the automorphism of $\k[E_1]$ defined by:
$$\sigma_2(E_1)=q^{-3}E_1,$$
$\sigma_3$ denotes the automorphism of $\k[E_1][E_2;\sigma_2]$ defined by:
$$\sigma_3(E_1)=q^{-1}E_1 \ \ \ \ \ \sigma_3(E_2)=q^{-3}E_2,$$
$\delta_3$ denotes the $\sigma_3$-derivation of $\k[E_1][E_2;\sigma_2]$ defined by:
$$\delta_3(E_1)=-(q+q^{-1}+q^{-3})E_2 \ \ \ \ \ \delta_3(E_2)=0,$$
$\sigma_4$ denotes the automorphism of $\k[E_1]\cdots[E_3;\sigma_3,\delta_3]$ defined by:
$$\sigma_4(E_1)=E_1 \ \ \ \ \ \sigma_4(E_2)=q^{-3}E_2 \ \ \ \ \sigma_4(E_3)=q^{-3}E_3,$$
$\delta_4$ denotes the $\sigma_4$-derivation of $\k[E_1]\cdots[E_3;\sigma_3,\delta_3]$ defined by:
$$\delta_4(E_1)=(1-q^2)E_3^2 \ \ \ \ \ \delta_4(E_2)=\frac{-q^4+2q^2-1}{q^4+q^2+1}E_3^3 \ \ \ \ \ \delta_4(E_3)=0,$$
$\sigma_5$ denotes the automorphism of $\k[E_1]\cdots[E_4;\sigma_4,\delta_4]$ defined by:
$$\sigma_5(E_1)=qE_1 \ \ \ \ \ \sigma_5(E_2)=E_2 \ \ \ \ \sigma_5(E_3)=q^{-1}E_3 \ \ \ \ \ \sigma_5(E_4)=q^{-3}E_4,$$
$\delta_5$ denotes the $\sigma_5$-derivation of $\k[E_1]\cdots[E_4;\sigma_4,\delta_4]$ defined by:
$$\delta_5(E_1)=-(1+q^2)E_3 \ \ \ \ \ \delta_5(E_2)=(1-q^2)E_3^2 \ \ \ \ \ \delta_5(E_3)=-(q+q^{-1}+q^{-3})E_4 \ \ \ \ \delta_5(E_4)=0,$$
$\sigma_6$ denotes the automorphism of $\k[E_1]\cdots[E_5;\sigma_5,\delta_5]$ defined by:
$$\sigma_6(E_1)=q^3E_1 \ \ \ \ \ \sigma_6(E_2)=q^3E_2 \ \ \ \ \sigma_6(E_3)=E_3 \ \ \ \ \ \sigma_6(E_4)=q^{-3}E_4 \ \ \ \ \sigma_6(E_5)=q^{-3}E_5,$$
and $\delta_6$ denotes the $\sigma_6$-derivation of $\k[E_1]\cdots[E_5;\sigma_5,\delta_5]$ defined by:
$$\delta_6(E_1)=-q^3E_5 \ \ \ \ \ \delta_6(E_2)=(q^2-q^4)E_3E_5+(q^4+q^2-1)E_4 \ \ \ \ \ \delta_6(E_3)=(1-q^2)E_5^2 $$
$$\delta_6(E_4)=\dfrac{-q^4+2q^2-1}{q^4+q^2+1}E_5^3 \ \ \ \ \ \delta_6(E_5)=0.$$

The deleting derivations algorithm (DDA for short) constructs by a decreasing induction a family 
$\{E_{1,j},\dots,E_{6,j}\}$ of elements of the division ring of fractions $F_q={\rm Fract}(\uq)$ of $\uq$ for each $2\leq j \leq 7$. The precise definition of these elements in the general context of QNAs can be found in \cite{ca}. 

In the present case, direct computation leads to: 

\begin{align*}
E_{1,6}&=E_1+rE_5E_6^{-1}\\
E_{2,6}&=E_2+tE_3E_5E_6^{-1}+uE_4E_6^{-1}+nE_5^3E_6^{-2}\\
E_{3,6}&=E_3+sE_5^2E_6^{-1}\\
E_{4,6}&=E_4+bE_5^3E_6^{-1}\\
E_{1,5}&=E_{1,6}+hE_{3,6}E_{5,6}^{-1}+gE_{4,6}E_{5,6}^{-2}\\ 
E_{2,5}&=E_{2,6}+fE_{3,6}^2E_{5,6}^{-1}+pE_{3,6}E_{4,6}E_{5,6}^{-2}+eE_{4,6}^2E_{5,6}^{-3
}\\
E_{3,5}&=E_{3,6}+aE_{4,6}E_{5,6}^{-1}\\
E_{1,4}&=E_{1,5}+sE_{3,5}^2E_{4,5}^{-1}\\
E_{2,4}&=E_{2,5}+bE_{3,5}^3E_{4,5}^{-1}\\
E_{1,3}&=E_{1,4}+aE_{2,4}E_{3,4}^{-1}\\
T_1&:=E_{1,2}=E_{1,3}\\
T_2&:= E_{2,2}=E_{2,3}=E_{2,4}\\
T_3&:=E_{3,2}=E_{3,3}=E_{3,4}=E_{3,5}\\
T_4&:=E_{4,2}=E_{4,3}=E_{4,4}=E_{4,5}=E_{4,6}\\
T_5&:=E_{5,2}=E_{5,3}=E_{5,4}=E_{5,5}=E_{5,6}=E_5\\
T_6&:=E_{6,2}=E_{6,3}=E_{6,4}=E_{6,5}=E_{6,6}=E_6,
\end{align*}
where the parameters $a,b,e,f,g,h,n,p,r,s,t,u$ are all defined in Appendix \ref{appenc}.

In the following, we set $A:=\uq$ and we denote by $A^{(j)}$ the subalgebra of $F_q$ generated by $E_{1,j}$, ..., $E_{6,j}$. The following results were proved by Cauchon \cite[Th\'eor\`eme 3.2.1 and Lemme 4.2.1]{ca}. 
For $2\leq j \leq 7$, we have:
\begin{enumerate}
\item when $j=7$, $(E_{1,7},\dots,E_{6,7}) =(E_1, \dots , E_6)$, so that $A^{(7)}=A=\uq$;
\item $A^{(j)}$ is isomorphic to an iterated Ore extension of the form $$\k[y_1]\dots[y_{j-1};\sigma_{j-1},\delta_{j-1}][y_j;\tau_j]\cdots[y_6;\tau_6]$$ 
by an isomorphism that sends $E_{i,j}$ to $y_i$ ($1 \leq i \leq 6$), where 
$\tau_j,\dots,\tau_6$ denote the $\k $-linear automorphisms  such that 
$\tau_{\ell}(y_i)=\lambda_{\ell,i} y_i$ ($1 \leq i \leq \ell$).
\item Assume that $j \neq 7$ and set $\Sigma_j:=\{E_{j,j+1}^n ~|~  n\in \mathbb{N} \}=
\{E_{j,j}^n  ~|~  n\in \mathbb{N} \}$.
\\This is a multiplicative system of regular elements of $A^{(j)}$ and $A^{(j+1)}$, that satisfies the Ore condition
in $A^{(j)}$ and $A^{(j+1)}$. Moreover we have 
$$A^{(j)}\Sigma_j^{-1}=A^{(j+1)}\Sigma_j^{-1}.$$
\end{enumerate}
It follows from these results that $A^{(j)}$ is a noetherian domain, for all $2\leq j \leq 7$.

As in \cite{ca}, we use the following notation.

\begin{notation}
We set $\overline{A}:=A^{(2)}$ and $T_i:=E_{i,2}$ for all $1\leq i \leq 6$.
\end{notation}

It follows from \cite[Proposition 3.2.1]{ca} that  $\overline{A}$ is a quantum affine space in the indeterminates $T_1,\dots,T_6$ and so can be presented as an iterated Ore extension in the $T_i$s with no skew-derivations: it is for this reason that Cauchon used the expression ``effacement des d\'erivations". More precisely, let $M=\left( \mu_{i,j} \right) \in M_6(\k ^*)$ be the
multiplicatively antisymmetric matrix defined as follows: 
$$M = \begin{bmatrix}
0&3&1&0&-1&-3\\
-3&0&3&3&0&-3\\
-1&-3&0&3&1&0\\
0&-3&-3&0&3&3\\
1&0&-1&-3&0&3\\
3&3&0&-3&-3&0
\end{bmatrix}.$$
Then we have \begin{equation}\overline{A}=\k_{q^M} [T_1,\dots,T_6]=\mathcal{O}_{q^M}(\k ^6),  \end{equation}
where $\k_{q^M} [T_1,\dots,T_6]=\mathcal{O}_{q^M}(\k ^6)$ denotes the $\k$-algebra generated by $T_1, \dots , T_6$ with relations $T_iT_j = q^{\mu_{i,j}}T_jT_i$ for all $i,j$. 
 
 
\subsection{Canonical embedding}
\label{c1s1}

Since $A=\uq$ is a QNA, one can use Cauchon's DDA in order to relate the prime spectrum of $A$ to the prime spectrum of the associated quantum affine space $\overline{A}$. More precisely, the DDA allows the construction of embeddings 
\begin{equation}
\psi_j:{\rm Spec}(A^{(j+1)}) \longrightarrow {\rm Spec}(A^{(j)}) \qquad (2\leq j \leq 6).
\end{equation} 
Recall from \cite[Section 4.3]{ca} that these embeddings are defined as follows.

Let $P \in  {\rm Spec}(A^{(j+1)})$. Then 
$$ 
\psi_j (P) = \left\{\begin{array}{ll} 
P\Sigma_j^{-1} \cap A^{(j)} & \mbox{ if } E_{j,j+1}=T_j \notin P \\
g_j^{-1} \left( P/\langle E_{j,j+1}\rangle \right) & \mbox{ if } E_{j,j+1} \in P \\
\end{array}\right.
$$
where $g_j$ denotes the surjective homomorphism 
$$
g_j:A^{(j)}\rightarrow A^{(j+1)}/\langle E_{j,j+1} \rangle
$$ 
defined by 
$$
g_j(E_{i,j}):=E_{i,j+1} + \langle E_{j,j+1} \rangle
$$
 (for more details, see \cite[Lemme 4.3.2]{ca}).  It was proved by Cauchon \cite[Proposition 4.3.1]{ca} that $\psi_j$ induces an increasing homeomorphism from the topological space $$\{P \in  {\rm Spec}(A^{(j+1)}) \mid E_{j,j+1} \notin P \}$$ onto $$\{Q \in  {\rm Spec}(A^{(j)}) \mid E_{j,j} \notin Q \}$$ whose inverse is also an increasing homeomorphism. Also, $\psi_j$ induces an increasing homeomorphism from $$\{P \in  {\rm Spec}(A^{(j+1)}) \mid E_{j,j+1} \in P \}$$ onto its image by $\psi_j$ whose inverse similarly is an increasing homeomorphism. Note however that, in general, $\psi_j$ is not an homeomorphism from ${\rm Spec}(A^{(j+1)})$ onto its image.

Composing these embeddings, we get an embedding \begin{equation} 
\psi:=\psi_2 \circ \dots \circ \psi_6 : 
{\rm Spec}(A) \longrightarrow {\rm Spec}(\overline{A}), \end{equation} which is called the  \emph{canonical embedding} from ${\rm Spec}(A)$ into 
${\rm Spec}(\overline{A})$. 

The canonical embedding $\psi$ is $\ch$-equivariant so that $\varphi (\hspec(A)) \subseteq \hspec(\overline{A})$. 
Interestingly, the set $\hspec(\overline{A})$ has been described by Cauchon as follows. For any subset $C$ of $\{1,\dots,6\}$, let $K_C$ denote the $\ch$-prime ideal of 
 $\overline{A}$  generated by the $T_i$ with $i\in C$, that is
 \[
K_C= \langle T_i\mid i\in C\rangle .
\]
It follows from \cite[Proposition 5.5.1]{ca} that  
$$\hspec(\overline{A}) = \{K_C ~|~ C \subseteq \{1,\dots,6\} \},$$
so that 
$$\psi (\hspec(A)) \subseteq \{K_C ~|~ C \subseteq \{1,\dots,6\} \}.$$


\section{Primitive ideals of $\uq$ in the $0$-stratum}

The aim of this section is to give explicit generating sets for the primitive ideals of $\uq$ that belong to the $0$-stratum. They are intimately related to the centre of $\uq$ and so we start this section by making explicit the centre of $\uq$ and related algebras. 

\subsection{Centre of $\uq$} 
\label{c2s1}

Recall that $\overline{A}:=A^{(2)}=\k_{\Lambda}[T_1,\cdots, T_6]$ is a quantum affine space.  Set $\Omega_1:=T_1T_3T_5$ and $\Omega_2:=T_2T_4T_6.$ One can easily verify that $\Omega_1$ and $\Omega_2$ are central elements of $\overline{A}$ by checking they commute with all $T_i$s. 

We now want to successively pull $\Omega_1$ and $\Omega_2$ from the quantum affine space $\overline{A}$ into the algebra $A$ using the data of DDA of $A$ discussed above. Direct computation shows that 
\begin{align*}
\Omega_1&:=T_1T_3T_5\\&=E_{1,4}E_{3,4}E_{5,4}+aE_{2,4}E_{5,4}\\
&=E_{1,5}E_{3,5}E_{5,5}
+aE_{2,5}E_{5,5}\\
&=E_{1,6}E_{3,6}E_{5,6}
+aE_{1,6}E_{4,6}
+aE_{2,6}
E_{5,6}+a'E_{3,6}^2\\
&=E_1E_3E_5+aE_1E_4+aE_2E_5+a'E_3^2, 
\end{align*}
and 
\begin{align*}
\Omega_2&:=T_2T_4T_6\\
&=E_{2,5}E_{4,5}E_{6,5}+bE_{3,5}^3E_{6,5}\\
&=E_{2,6}E_{4,6}E_{6,6}+bE_{3,6}^3E_{6,6}\\
&=E_2E_4E_6+bE_2E_5^3+bE_3^3E_6
+b'E_3^2E_5^2+c'E_3E_4E_5+d'E_4^2,
\end{align*}
where the parameters $a,b,a',b',c',d'$ can be found in Appendix \ref{appenc}. Note, $\Omega_1$ and $\Omega_2$ are central elements of $A^{(j)}$ for each $2\leq j\leq 7$ since Fract$(A^{(j)})=$ Fract$(\overline{A}).$  

We now want to show that the centre of $A$ and other related algebras is a polynomial ring generated by $\Omega_1$ and $\Omega_2$ over $\k.$ The following discussions will lead us to the proof. 

Set $S_j:=\{\lambda T_j^{i_j}T_{j+1}^{i_{j+1}}\cdots T_6^{i_6}\mid i_j,\cdots,i_6\in \mathbb{N} \ \text{and} \ \lambda\in \k^* \}$ for each $2\leq j\leq 6.$ One can observe that $S_j$ is a multiplicative system of non-zero divisors of $A^{(j)}=\k\langle E_{i,j}\mid \ \text{for all} \ i=1,\cdots, 6\rangle.$ Furthermore, the elements $T_j, \cdots, T_6$ are all normal in $A^{(j)}.$ Hence, $S_j$ is an Ore set in $A^{(j)}.$ We can therefore localize $A^{(j)}$ at $S_j$ as follows:
$$R_j:=A^{(j)}S_j^{-1}.$$
Recall that $\Sigma_j:=\{T_j^n\mid n\in \mathbb{N}\}$ is an Ore set in both $A^{(j)}$ and $A^{(j+1)}$ for each $2\leq j\leq 6,$ and that
$$A^{(j)}\Sigma_j^{-1}=A^{(j+1)}\Sigma_j^{-1}.$$
For all $2\leq j\leq 6,$ we have that:
\begin{align}
\label{emb0}
R_j= A^{(j)}S_j^{-1}=(A^{(j)}\Sigma_j^{-1})S_{
1+j}^{-1}=(A^{(j+1)}\Sigma_j^{-1})S_{
1+j}^{-1}=(A^{(j+1)}S_{j+1}^{-1})\Sigma_{
j}^{-1}=R_{j+1}\Sigma_j^{-1}.
\end{align}
Note, $R_7:=A.$

Again, one can also observe that $T_1$ is normal in $R_2.$ As a result, we can form the localization $R_1:=R_2[T_1^{-1}]$. The algebra $R_1$ is the quantum torus associated to the quantum affine space $\overline{A}.$ As a result, $R_1=\mathbb{\k}_{q^{M}}[T_1^{\pm 1},\cdots, T_6^{\pm 1}],$ where  $T_iT_j=q^{\mu_{ij}}T_jT_i$ for all $1\leq i,j\leq 6$ and $\mu_{ij}\in M.$  
Similar to \cite[\S 31]{ss}, we construct the following tower of algebras:
\begin{align}
\label{emb}
A=R_7&\subset R_6=R_7\Sigma_6^{-1}\subset R_5=R_6\Sigma_5^{-1}\subset R_4=R_5\Sigma_4^{-1}\nonumber\\
&\subset R_3=R_4\Sigma_3^{-1}\subset R_2=R_3\Sigma_2^{-1}\subset R_1.
\end{align}
Note, the family $(E_{1,j}^{k_1} \cdots E_{6,j}^{k_6}),$ where $k_i\in \mathbb{N}$ if $i<j$ and $k_i\in \mathbb{Z}$ otherwise is a PBW-basis of 
$R_j$ for all $1\leq i,j\leq 7.$ In particular, the family $( T_1^{k_1}T_2^{k_2}T_3^{k_3}T_4^{k_4}T_5^{k_5}T_6^{k_6})_{k_1,\cdots, k_6\in \mathbb{Z}}$ is a basis of $R_1.$ 

\begin{lemma}
\label{rlr}
\begin{enumerate}
\item $Z(R_1)=\k[\Omega_1^{\pm 1},\Omega_2^{\pm 1}].$
\item $Z(R_3)=\k[\Omega_1,\Omega_2].$
\item $Z(\overline{A})=\k[\Omega_1,\Omega_2].$
\end{enumerate}
\end{lemma}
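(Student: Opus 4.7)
The plan is to reduce all three identities to a calculation in the quantum torus $R_1$. For part (1), I compute $Z(R_1)$ directly by classifying the central Laurent monomials. For parts (2) and (3), I show that any central element of $R_3$ or of $\overline{A}$ is automatically central in $R_1$, and then read off the allowed exponents from the PBW-basis descriptions recalled in Section 2.

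For part (1), since $R_1=\k_{q^{M}}[T_1^{\pm 1},\cdots,T_6^{\pm 1}]$ is a quantum torus and $q$ is not a root of unity, a standard argument shows that a Laurent monomial $T_1^{k_1}\cdots T_6^{k_6}$ commutes with every $T_j$ iff $Mk=0$, and a general element is central iff every monomial in its PBW expansion is central. So $Z(R_1)$ is the $\k$-span of $\{T_1^{k_1}\cdots T_6^{k_6} : k\in\ker M\}$, and it remains to compute $\ker M\subseteq\Z^6$. Using the antisymmetry of $M$, the obvious row relations ``rows $1+3+5=0$'' and ``rows $2+4+6=0$'' translate into $(1,0,1,0,1,0),(0,1,0,1,0,1)\in\ker M$. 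Conversely, a short back-substitution in the system $Mk=0$ (the equations from rows 2 and 5 force $k_1=k_3$ and $k_4=k_6$; the equations from rows 1 and 6 then force $k_2=k_6$ and $k_5=k_1$) shows that every $k\in\ker M$ satisfies $k_1=k_3=k_5$ and $k_2=k_4=k_6$, so the two vectors above form a $\Z$-basis of $\ker M$. Since they correspond precisely to $\Omega_1=T_1T_3T_5$ and $\Omega_2=T_2T_4T_6$, we conclude $Z(R_1)=\k[\Omega_1^{\pm 1},\Omega_2^{\pm 1}]$.

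For parts (2) and (3), note that both $R_3$ and $\overline{A}$ contain the generators $T_1,\cdots,T_6$ of $R_1$. If $z$ lies in $Z(R_3)$ or in $Z(\overline{A})$, then $z$ commutes with each $T_i$; inside $R_1$ this automatically yields commutation with every $T_i^{-1}$, so $z\in Z(R_1)=\k[\Omega_1^{\pm 1},\Omega_2^{\pm 1}]$. Combined with the obvious inclusions $\k[\Omega_1,\Omega_2]\subseteq Z(R_3)$ and $\k[\Omega_1,\Omega_2]\subseteq Z(\overline{A})$, it remains to identify the intersection of $\k[\Omega_1^{\pm 1},\Omega_2^{\pm 1}]$ with each sub-algebra. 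A direct reordering in the quantum torus shows that $\Omega_1^{m}\Omega_2^{n}$ equals a nonzero scalar times the single PBW monomial $T_1^{m}T_2^{n}T_3^{m}T_4^{n}T_5^{m}T_6^{n}$ of $R_1$. Thus any element $\sum c_{m,n}\Omega_1^{m}\Omega_2^{n}$ expands in $R_1$'s PBW basis as a sum of distinct such monomials, and comparing with the PBW descriptions of $\overline{A}$ (exponents in $\N^6$) and of $R_3$ (exponents in $\N^2\times\Z^4$) recalled in Section 2, membership in either sub-algebra forces $m,n\geq 0$ throughout the support. This yields $Z(R_3)=Z(\overline{A})=\k[\Omega_1,\Omega_2]$, as required.

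The main obstacle is really the explicit determination of $\ker M$ in part (1); once this is in hand, parts (2) and (3) reduce to routine PBW-bookkeeping thanks to the embedding of all the relevant algebras into the same quantum torus $R_1$.
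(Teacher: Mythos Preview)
Your proof is correct and follows essentially the same approach as the paper: both reduce to computing the kernel of $M$ and then using the PBW basis to restrict exponents. The only minor organizational difference is that for parts (2) and (3) you factor through the embedding into $R_1$ and invoke part (1), whereas the paper argues directly inside $R_3$ (respectively $\overline{A}$) by using the normality of each $T_i$ against the PBW expansion; the underlying computation is identical.
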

\begin{proof}
\begin{enumerate}
\item It follows from \cite[1.3]{gz2} that $Z(R_1)$ is a commutative Laurent polynomial ring generated by certain monomials in the $T_i$s. A direct computation proves the result. 
\item  Clearly, $\k[\Omega_1,\Omega_2]\subseteq Z(R_3).$ For the reverse inclusion, let $y\in Z(R_3).$ Then, $y$ can be written in terms of the basis of $R_3$ (recall, $T_i=E_{i,3}$) as: $$y=\displaystyle \sum_{(i,\cdots,n)\in \mathbb{N}^2\times \mathbb{Z}^4}a_{(i,\cdots,n)} T_1^iT_2^{j}T_3^{k}T_4^{l}T_5^{m}T_6^{n}.$$  
Using the fact that $T_1, \cdots, T_6$ are all normal elements in $R_3$ and $yT_i=T_i y$ for all $i$, one easily conclude that $i=k=m$ and $j=l=n$ for all monomials appearing in $y$. Since $i,j\geq 0$,   we have that $y=\sum_{(i,j)\in \mathbb{N}^2}q^{\bullet}a_{(i,j)} T_1^iT_3^{i}T_5^{i}T_2^{j}T_4^{j}T_6^{j}=\sum_{(i,j)\in \mathbb{N}^2}q^{\bullet}a_{(i,j)}\Omega_1^i\Omega_2^j.$ This implies that $y\in \k[\Omega_1,\Omega_2]$ as expected. 
\item Similar to the previous case, and so left to the reader. 
\end{enumerate}
\end{proof}

\begin{lemma}
\label{c3l1}
$Z(A)=\k[\Omega_1,\Omega_2].$
\end{lemma}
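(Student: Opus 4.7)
The plan is to prove the two inclusions of $Z(A) = \k[\Omega_1,\Omega_2]$ separately, leaning heavily on Lemma~\ref{rlr}(2) together with the tower $A \subseteq R_3 \subseteq F_q$ furnished by~(\ref{emb}).

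For $\k[\Omega_1,\Omega_2] \subseteq Z(A)$, I would first recall from the explicit pull-back computations in Subsection~\ref{c2s1} that $\Omega_1$ and $\Omega_2$ are polynomials in the generators $E_1,\ldots,E_6$, hence lie in $A$. Centrality then follows because $\Omega_1,\Omega_2$ are central in $\overline{A}$ by construction, and since the localization identities $A^{(j)}\Sigma_j^{-1}=A^{(j+1)}\Sigma_j^{-1}$ force $\mathrm{Frac}(A^{(j)})=F_q$ for every $j$, they remain central in $F_q$ and therefore commute with every element of $A \subseteq F_q$.

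For the reverse inclusion $Z(A) \subseteq \k[\Omega_1,\Omega_2]$, I would pick $z \in Z(A)$ and observe that $z$ is automatically central in the skew field $F_q=\mathrm{Frac}(A)$: for any regular $s \in A$, the identity $sz=zs$ gives $zs^{-1}=s^{-1}z$, and every element of $F_q$ is a product of elements of $A$ with their inverses. Since the tower~(\ref{emb}) exhibits $R_3$ as a subring of $F_q$ containing $A$, the element $z$ is in particular central in $R_3$; by Lemma~\ref{rlr}(2), $Z(R_3)=\k[\Omega_1,\Omega_2]$, so $z \in \k[\Omega_1,\Omega_2]$.

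The only conceptual pitfall here is that $R_3$ properly contains $A$, featuring localized elements such as $T_3^{-1}$ that are not visible from within $A$, so one might briefly worry that central elements of $A$ could lose their centrality in $R_3$. The embedding into the common skew field $F_q$ resolves this cleanly. The real technical work---the computation of $Z(R_3)$ via the quantum-affine structure and the normality of the $T_i$'s---has already been carried out in the preceding lemma, so the present statement reduces to a clean corollary of Lemma~\ref{rlr}(2).
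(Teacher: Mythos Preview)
Your proposal is correct and follows essentially the same approach as the paper: the paper also sandwiches $Z(A)$ between $\k[\Omega_1,\Omega_2]$ and $Z(R_3)$, invoking Lemma~\ref{rlr}(2) for the latter. The only cosmetic difference is that the paper argues $Z(R_{i+1})\subseteq Z(R_i)$ directly from each $R_i$ being a localization of $R_{i+1}$ along the tower~(\ref{emb}), rather than passing through the full skew field $F_q$ as you do; both routes encode the same standard fact that centrality survives Ore localization.
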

\begin{proof}
 Since $R_i$ is a localization of $R_{i+1},$ it follows that $Z(R_{i+1})\subseteq Z(R_i).$ From \eqref{emb}, we have that  $Z(A)\subseteq Z(R_3).$ Observe that
 $\k[\Omega_1,\Omega_2]\subseteq Z(A)\subseteq Z(R_3)=\k[\Omega_1,\Omega_2].$ Hence, $Z(A)=\k[\Omega_1,\Omega_2].$
\end{proof}
\begin{remark}
\label{remark-centre-Ri}
Since $Z(A)=Z(R_3)=\k[\Omega_1,\Omega_2]$ and $Z(R_{i+1})\subseteq Z(R_i),$  it follows from  \eqref{emb} that $Z(A)=Z(R_6)=Z(R_5)=Z(R_4)=Z(R_3)=\k[\Omega_1,\Omega_2].$ One can also deduce from Lemma \ref{rlr} that $Z(R_2)=\k[\Omega_1, \Omega_2^{\pm 1}].$   
\end{remark}

\begin{remark}
The centre of  the positive part of a quantised enveloping algebra of a simple Lie algebra has been described by Caldero in \cite{caldero} but we will need Remark \ref{remark-centre-Ri} later on.
\end{remark}

\subsection{$\Omega_1$ and $\Omega_2$ generate completely prime ideals of $\uq$}

The aim of this paragraph is to show that $\langle\Omega_1\rangle$ and $\langle\Omega_2\rangle$ are (completely) prime. We will make use of  DDA to establish these facts. Note that we could also have used the results of \cite{goodearl2016quantum} to obtain these results. However, we will need some of the intermediate steps obtained here to compute the derivations of certain primitive quotients of $\uq$ in the final section.

From Section \ref{c1s1} we know that there is a bijection between  $\{P\in \text{Spec}(A^{(j+1)})\mid P\cap \Sigma_j=\emptyset\}$ and $\{Q\in \text{Spec}(A^{(j)})\mid Q\cap \Sigma_j=\emptyset\}$ via $P=Q\Sigma_j^{-1}\cap A^{(j+1)}.$ Note, $\langle T_1\rangle$ and 
 $\langle T_2\rangle$ are prime ideals of the quantum affine space $\overline{A},$ since each of the factor algebras
 $\overline{A}/\langle T_1\rangle$ and $\overline{A}/\langle T_2\rangle$ is  isomorphic to a quantum affine space of rank 5 which is well known to be a domain.

 The following result and its proof show that $\langle T_1\rangle$ belongs to the image $\mathrm{Im}(\psi)$ of the canonical embedding $\psi$ and that $\langle\Omega_1\rangle$ is the completely prime ideal of $A$ such that $\psi (\langle\Omega_1\rangle)=\langle T_1\rangle.$

\begin{lemma} 
\label{lem1}
$\langle \Omega_1\rangle\in  \spec(A)$.
\end{lemma}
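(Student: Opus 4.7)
The plan is to prove $\langle\Omega_1\rangle$ is prime by locating it as the unique preimage under the canonical embedding $\psi$ of the prime $\langle T_1\rangle\in\spec(\overline{A})$; primeness then follows automatically.

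I would first note that $\langle T_1\rangle$ is completely prime in $\overline{A}$, since $\overline{A}/\langle T_1\rangle\cong\k_{q^M}[T_2,T_3,T_4,T_5,T_6]$ is a rank-5 quantum affine space, hence a domain. Inspection of the DDA formulas in the excerpt shows that $E_{i,3}=E_{i,2}=T_i$ for every $i$, so $A^{(3)}=\overline{A}$ and the map $\psi_2$ is trivial. I would then lift $\langle T_1\rangle$ stepwise: set $Q_3:=\langle T_1\rangle$, and for $j=3,\ldots,6$ define $Q_{j+1}:=Q_j\Sigma_j^{-1}\cap A^{(j+1)}$. To ensure this is a well-defined prime via the DDA bijection, one needs $T_j\notin Q_j$; iterating the identification $A^{(k)}\Sigma_k^{-1}=A^{(k+1)}\Sigma_k^{-1}$ yields an injection of $A^{(j)}/Q_j$ into the localisation $(\overline{A}/\langle T_1\rangle)\Sigma_3^{-1}\cdots\Sigma_{j-1}^{-1}$ of the domain $\overline{A}/\langle T_1\rangle$, in which $T_j$ is one of the standard generators and hence nonzero. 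Setting $P:=Q_7\in\spec(A)$ thus yields a prime with $\psi(P)=\langle T_1\rangle$.

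The inclusion $\langle\Omega_1\rangle\subseteq P$ is immediate: $\Omega_1=T_1T_3T_5\in Q_3$, and since $\Omega_1$ is central and lies in every $A^{(j)}$ (with the explicit expressions computed in Section \ref{c2s1}), the inclusion $\Omega_1\in Q_j$ propagates through each localisation–contraction step, so $\Omega_1\in Q_7=P$. The reverse inclusion $P\subseteq\langle\Omega_1\rangle$ is the main obstacle. I would handle it by showing that $A/\langle\Omega_1\rangle$ embeds into the domain $R_3/T_1R_3$, working inside the tower $A=R_7\subset R_6\subset R_5\subset R_4\subset R_3$. In $R_3$ the units $T_3,T_5$ force $\Omega_1 R_3=T_1 R_3$, which is completely prime as the Ore-localisation extension of $\langle T_1\rangle$ from $\overline{A}=A^{(3)}$. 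Descending along the tower, I would argue inductively that $\Omega_1 R_j\cap R_{j+1}=\Omega_1 R_{j+1}$; this is equivalent to $T_j$ being a non-zerodivisor modulo $\Omega_1 R_{j+1}$ and is the technical heart of the argument, requiring careful use of the structure of $R_{j+1}$ at each stage. Composing these contractions gives $\Omega_1 R_3\cap A=\Omega_1 A$, so $A/\langle\Omega_1\rangle\hookrightarrow R_3/T_1R_3$ is injective into a domain; this forces $\langle\Omega_1\rangle=P$ and completes the proof.
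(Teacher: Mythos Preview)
Your approach is essentially the paper's: both lift $\langle T_1\rangle$ through the DDA steps and identify the result as $\langle\Omega_1\rangle$, and both reduce the problem to showing that successive contractions through the $T_j$-localisations preserve the ideal, which is precisely the step you flag as the technical heart. The paper differs only in packaging: it works directly in the algebras $A^{(j)}$ and tracks an explicit generator at each stage (e.g.\ $\Omega_1':=E_{1,4}T_3+aT_2$ in $A^{(4)}$, which satisfies $\Omega_1'T_5=\Omega_1$), whereas you work in the localisations $R_j$ where $T_5,T_6$ are already units, so $\Omega_1$ itself serves as the generator throughout. The paper then proves one contraction step in full (a minimality argument on the $T_3$-exponent using the free module structure of $A^{(4)}$ over $\k\langle E_{1,4},T_2,T_4,T_5,T_6\rangle$) and leaves the rest as analogous; you would need exactly the same kind of PBW/minimality argument to establish that $T_j$ is regular modulo $\Omega_1 R_{j+1}$.

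One remark: your part (a)---constructing $P$ abstractly and checking $\langle\Omega_1\rangle\subseteq P$---is redundant for the lemma as stated. Your part (b) alone shows $A/\langle\Omega_1\rangle\hookrightarrow R_3/T_1R_3$, a domain, hence $\langle\Omega_1\rangle$ is completely prime. The equality $\langle\Omega_1\rangle=P$ does follow (since $P$ extends to $T_1R_3=\Omega_1 R_3$ in $R_3$ and then contracts back to $\Omega_1 R_3\cap A=\langle\Omega_1\rangle$, giving $P\subseteq\langle\Omega_1\rangle$), but you can drop part (a) entirely if you only want the lemma.
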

\begin{proof}
We will prove this result in several steps by showing that:
\begin{itemize}
\item[1.]  $\langle T_1\rangle_{A^{(3)}} \in$ Spec$(A^{(3)})$.
\item[2.] $\langle E_{1,4}T_3+a T_2\rangle =\langle T_1\rangle_{A^{(3)}}[T_3^{-1}]\cap A^{(4)},$  hence 
$Q_1:=\langle E_{1,4}T_3+a T_2\rangle \in $ Spec$(A^{(4)})$.
\item[3.] $\langle E_{1,5}T_3+a E_{2,5}\rangle =Q_1[T_4^{-1}]\cap A^{(5)},$ hence $Q_2:=\langle E_{1,5}T_3+aE_{2,5} \rangle \in $ Spec$(A^{(5)})$.
\item[4.] $\langle \Omega_{1}\rangle_{A^{(6)}} = Q_2[T_5^{-1}]\cap A^{(6)},$  hence $\langle \Omega_{1}\rangle_{A^{(6)}} \in $ Spec$(A^{(6)})$.
\item[5.] $\langle \Omega_1\rangle_{A}=\langle \Omega_{1}\rangle_{A^{(6)}}[T_6^{-1}]\cap A,$ hence 
$\langle \Omega_1\rangle_{A}\in$ Spec$(A)$.
\end{itemize}
We now proceed to prove the above claims. 

1. One can easily verify that $A^{(3)}/\langle T_1\rangle$ is isomorphic to a quantum affine space of rank 5, which is a domain, hence $\langle T_1\rangle$ is a prime ideal in $A^{(3)}.$

2. Note, $T_1=E_{1,4}+a T_2T_3^{-1}.$ We want to show that $\langle E_{1,4}T_3+a T_2\rangle =\langle T_1\rangle_{A^{(3)}}[T_3^{-1}]\cap A^{(4)}.$  Observe that $\langle E_{1,4}T_3+a T_2\rangle\subseteq \langle T_1\rangle_{A^{(3)}}[T_3^{-1}]\cap A^{(4)}.$ We established the reverse inclusion. Let $y\in\langle T_1\rangle_{A^{(3)}}[T_3^{-1}]\cap A^{(4)}.$ Then,  $y\in \langle T_1\rangle_{A^{(3)}}[T_3^{-1}].$ Therefore, there exists $i\in\mathbb{N}$ such that 
$y T_3^i\in \langle T_1\rangle_{A^{(3)}}.$ This implies that
$y T_3^i=T_1v ,$ for some $v\in A^{(3)}.$ Since $A^{(3)}[T_3^{-1}]=A^{(4)}[T_3^{-1}],$ there exists $j\in\mathbb{N}$ such that $v T_3^j=v',$ for some 
$v'\in A^{(4)}.$ It follows that
$y T_3^{i+j}=T_1v T_3^j=T_1v'=(E_{1,4}+aT_2T_3^{-1})v'=(E_{1,4}T_3+aT_2)T_3^{-1}v'.$ 
The multiplicative system generated by $T_3$ satisfies the Ore condition in $A^{(4)}$, hence, there exists
$k\in \mathbb{N}$ and $v''\in A^{(4)}$ such that $T_3^{-1}v'=v''T_3^{-k}.$
One can therefore  write $y T_3^{i+j}=(E_{1,4}T_3+aT_2)v''T_3^{-k}.$
This implies that
$yT_3^{\delta}=\Omega_{1}'v'',$ where $\Omega_{1}':=E_{1,4}T_3+aT_2$ and  $\delta=i+j+k.$ 
Set $S:=\{s\in\mathbb{N}\mid \exists v''\in A^{(4)}: y T_3^{s}=\Omega_{1}'v''\}.$ Note, $S\neq \emptyset,$  since $\delta\in S.$   Let $s=s_0$ be the minimum element of $S$ such that $y T_3^{s_0}=\Omega_{1}'v''.$ We want to show that $s_0=0.$   Remember, $\Omega_1'T_5=\Omega_1$  in $A^{(4)}.$  
Since $\Omega_1$ is central in $A^{(4)},$ and $T_5$ is normal in $A^{(4)},$ we must have $\Omega_1'$ to be a normal element in $A^{(4)},$ otherwise, there will be a contradiction. Therefore, there exists $w\in A^{(4)}$ such that $y T_3^{s_0}=\Omega_{1}'v''=w\Omega_1'.$
Now, $A^{(4)}$ can be viewed as a free left
$\k\langle E_{1,4},T_2,T_4,T_5,T_6\rangle-$module with basis $\left( T_3^{\xi}\right) _{\xi\in\mathbb{N}}.$ One can therefore write $y=\sum_{\xi=0}^n\alpha_\xi T_3^{\xi}$
and $w=\sum_{\xi=0}^n\beta_\xi T_3^{\xi},$ where $\alpha_\xi,\beta_\xi\in \k\langle E_{1,4},T_2,T_4,T_5,T_6\rangle.$
This implies that
$\sum_{\xi=0}^n\alpha_\xi T_3^{\xi+s_0}=\sum_{\xi=0}^n\beta_\xi T_3^{\xi}\Omega_1'=\sum_{\xi=0}^n q^{\bullet} \beta_\xi \Omega_1'T_3^{\xi}$ (note, $T_3\Omega_1'=q^{-1}\Omega_1'T_3$). Given that $\Omega_{1}'=E_{1,4}T_3+aT_2,$ we have that
$\sum_{\xi=0}^n\alpha_\xi T_3^{\xi+s_0}=
\sum_{\xi=0}^nq^{\bullet}  \beta_\xi E_{1,4}T_3^{1+\xi}+\sum_{\xi=0}^nq^{\bullet}  a\beta_\xi T_2T_3^{\xi}.$ Suppose 
that $s_0>0.$ Then, identifying the constant coefficients, we have $q^{\bullet} a\beta_0T_2=0.$ As a result,  
$\beta_0=0,$ since $q^{\bullet} aT_2\neq 0.$ Hence, $w$ can be written as $w=\sum_{\xi=1}^n\beta_\xi T_3^{\xi}.$
Returning to $yT_3^{s_0}=w\Omega_{1}',$ we have that $y T_3^{s_0}=\sum_{\xi=1}^n\beta_\xi T_3^{\xi}\Omega_{1}'=\sum_{\xi=1}^nq^{\bullet}\beta_\xi \Omega_{1}'T_3^{\xi}=\Omega_{1}'\sum_{\xi=1}^nq^{\bullet}\beta_\xi' T_3^{\xi}.$ This implies that $y T_3^{s_0-1}=\Omega_{1}'w',$
where $w'=\sum_{\xi=1}^nq^{\bullet}\beta_\xi' T_3^{\xi-1}\in A^{(4)},$ with $\beta_{\xi}'\in \k\langle E_{1,4},T_2,T_4,T_5,T_6\rangle.$ Consequently,
$s_0-1\in S,$ a contradiction! Therefore, $s_0=0$ and $y=\Omega_{1}'v''\in\langle\Omega_{1}'\rangle=\langle E_{1,4}T_3+aT_2\rangle.$ Hence,
$\langle T_1\rangle_{A^{(3)}}[T_3^{-1}]\cap A^{(4)}\subseteq\langle E_{1,4}T_3+aT_2\rangle$ as desired.

The following steps are proved in a similar manner to Step 2. They are left to the reader who might want to check details in \cite{io-thesis}.
\comment{3. 
We want to show that $\langle E_{1,5}T_3+a E_{2,5}\rangle =\langle \Omega_1'\rangle_{A^{(4)}}[T_4^{-1}]\cap A^{(5)}.$ Note, $\Omega_1'=E_{1,4}T_3+a T_2=E_{1,5}T_3+a E_{2,5}.$ Observe that $\langle E_{1,5}T_3+a E_{2,5}\rangle\subseteq \langle \Omega_1'\rangle_{A^{(4)}}[T_4^{-1}]\cap A^{(5)}.$ We establish the reverse inclusion. Let $y\in\langle \Omega_1'\rangle_{A^{(4)}}[T_4^{-1}]\cap A^{(5)}.$ Then, $y\in \langle \Omega_1'\rangle_{A^{(4)}}[T_4^{-1}].$ Therefore, there exists $i\in\mathbb{N}$ such that 
$y T_4^i\in \langle \Omega_1'\rangle_{A^{(4)}}.$ This implies that
$y T_4^i=\Omega_1'v,$ for some $v\in A^{(4)}.$ Since $A^{(4)}[T_4^{-1}]=A^{(5)}[T_4^{-1}],$ there exists $j\in\mathbb{N}$ such that $v T_4^j=v',$ for some 
$v'\in A^{(5)}.$ It follows that
$y T_4^{i+j}=\Omega_1'v T_4^j=\Omega_1'v'.$ This implies that
$yT_4^{\delta}=\Omega_{1}'v',$ where  $\delta=i+j.$ 
Set $S:=\{s\in\mathbb{N}\mid \exists v'\in A^{(5)} : y T_4^{s}=\Omega_{1}'v'\}.$  Since $\delta\in S,$ we have that $S\neq \emptyset.$  Let $s=s_0$ be the minimum element of $S$ such that $y T_4^{s_0}=\Omega_{1}'v'.$ We want to show that $s_0=0.$  Note, $\Omega_1'T_5=\Omega_1$ in $A^{(5)}.$ Since 
$\Omega_1$ is central in $A^{(5)},$ and $T_5$ is normal in $A^{(5)},$ we must have $\Omega_1'$ as a normal element in $A^{(5)}.$ Therefore, there exists some $v''\in A^{(5)}$ such that $y T_4^{s_0}=\Omega_{1}'v'=v''\Omega_1'.$
Now, $A^{(5)}$ can be viewed as a free left
$\k\langle E_{1,5},E_{2,5},T_3,T_5,T_6\rangle-$module with basis $\left( T_4^{\xi}\right) _{\xi\in\mathbb{N}}.$ One can write $y=\sum_{\xi=0}^n\alpha_\xi T_4^{\xi}$
and $v''=\sum_{\xi=0}^n\beta_\xi T_4^{\xi},$ where $\alpha_\xi,\beta_\xi\in \k\langle E_{1,5},E_{2,5},T_3,T_5,T_6\rangle.$
This implies that
$\sum_{\xi=0}^n\alpha_\xi T_4^{\xi+s_0}=\sum_{\xi=0}^n \beta_\xi T_4^{\xi}\Omega_1'=\sum_{\xi=0}^nq^\bullet \beta_\xi \Omega_1'T_4^{\xi}$ (note, $T_4\Omega_1'=q^{-3}\Omega_1'T_4$).  Suppose 
that $s_0>0.$ Then, identifying the constant coefficients, we have that $q^\bullet \beta_0 \Omega_1'=0.$ Hence, $\beta_0=0,$ since $q^{\bullet}\Omega_1'\neq 0.$ One can therefore write $v''$ as $v''=\sum_{\xi=1}^n\beta_\xi T_4^{\xi}.$
Returning to $yT_4^{s_0}=v''\Omega_{1}',$ we have that $y T_4^{s_0}=\sum_{\xi=1}^n\beta_\xi T_4^{\xi}\Omega_{1}'=\sum_{\xi=1}^nq^{\bullet}\beta_\xi \Omega_{1}' T_4^{\xi}=\Omega_{1}'\sum_{\xi=1}^nq^{\bullet}\beta_\xi' T_4^{\xi},$ where $\beta_\xi'\in \k\langle E_{1,5},E_{2,5},T_3,T_5,T_6\rangle.$ This implies that $y T_4^{s_0-1}=\Omega_{1}'w,$
where $w=\sum_{\xi=1}^nq^{\bullet}\beta_\xi' T_4^{\xi-1}\in A^{(5)}.$ Consequently,
$s_0-1\in S,$ a contradiction! Therefore, $s_0=0$ and $y=\Omega_{1}'v'\in\langle\Omega_{1}'\rangle=\langle E_{1,5}T_3+aE_{2,5}\rangle.$ As a result,
$\langle \Omega_1'\rangle_{A^{(4)}}[T_4^{-1}]\cap A^{(5)}\subseteq\langle E_{1,5}T_3+aE_{2,5}\rangle$ as desired.

4. Observe that $\Omega_1'=E_{1,5}T_3+aE_{2,5}=\Omega_1T_5^{-1}$ in $A^{(6)}[T_5^{-1}].$ 
We want to show that $\langle\Omega_1'\rangle_{A^{(5)}}[T_5^{-1}]\cap A^{(6)}=\langle\Omega_1\rangle_{A^{(6)}}.$ Obviously, $\langle\Omega_1\rangle_{A^{(6)}}\subseteq \langle\Omega_1'\rangle_{A^{(5)}}[T_5^{-1}]\cap A^{(6)}.$ We establish the reverse inclusion.
Let $y\in \langle\Omega_1'\rangle_{A^{(5)}}[T_5^{-1}]\cap A^{(6)}.$ This implies that
$y\in \langle\Omega_1'\rangle_{A^{(5)}}[T_5^{-1}].$ There exists $i\in\mathbb{N}$ such that $yT_5^i\in \langle\Omega_1\rangle_{A^{(5)}}.$ Hence, $yT_5^i=\Omega_1'v,$ for some $v\in A^{(5)}.$ Furthermore, since $A^{(5)}[T_5^{-1}]=A^{(6)}[T_5^{-1}],$ there exists $j\in\mathbb{N}$ such that 
$vT_5^j=v',$ for some $v'\in A^{(6)}.$ It follows from $yT_5^i=\Omega_1'v$  that $yT_5^{i+j}=\Omega_1'vT_5^j=\Omega_1'v'=\Omega_1T_5^{-1}v'$ (note, $\Omega_1'T_5=\Omega_1$ in $A^{(6)}$).
The multiplicative system generated by $T_5$ satisfies the Ore condition in $A^{(6)}$, hence, there exists 
$k\in \mathbb{N}$ and $v''\in A^{(6)}$ such that $T_5^{-1}v'=v''T_5^{-k}.$
One can therefore write $yT_5^{i+j}
=\Omega_1v''T_5^{k}.$   Hence,
$yT_5^\delta=\Omega_1v'',$ where
$\delta=i+j+k.$  Set $S:=\{s\in\mathbb{N}\mid \exists v''\in A^{(6)} : y T_5^{s}=\Omega_{1}v''\}.$ Since $\delta\in S,$ we have that $S\neq \emptyset.$  Let $s=s_0$ be the minimum element of $S$ such that $ y T_5^{s_0}=\Omega_{1}v''.$ We want to show that $s_0=0.$
Now, $A^{(6)}$ can be viewed as a free
$\k\langle E_{1,6},E_{2,6},E_{3,6},T_4,T_6\rangle-$module with basis $\left( T_5^{\xi}\right) _{\xi\in\mathbb{N}}.$ One can write $y=\sum_{\xi=0}^n\alpha_\xi T_5^{\xi}$
and $v''=\sum_{\xi=0}^n\beta_\xi T_5^{\xi},$ where $\alpha_\xi,\beta_\xi\in \k\langle E_{1,6},E_{2,6},E_{3,6},T_4,T_6\rangle.$ With this, $y T_5^{s_0}=\Omega_{1}v''$ implies that $\sum_{\xi=0}^n\alpha_\xi T_5^{\xi+s_0}=
\sum_{\xi=0}^n\beta_\xi\Omega_1T_5^{\xi}.$ Write $\Omega_1=\gamma_1T_5+\gamma_2,$ where
$\gamma_1=E_{1,6}E_{3,6}+aE_{2,6}$ and $\gamma_2=a'E_{3,6}^2+aE_{1,6}E_{4,6}.$ It follows that $\sum_{\xi=0}^n\alpha_\xi T_5^{\xi+s_0}=
\sum_{\xi=0}^n \beta_\xi \gamma_1T_5^{\xi+1}+\sum_{\xi=0}^n \beta_\xi\gamma_2 T_5^{\xi}.$
Suppose 
that $s_0>0.$ Then, identifying the constant coefficients, we have that $\beta_0 \gamma_2=0.$ Hence, 
$\beta_0=0,$ since $\gamma_2\neq 0.$ One can therefore write $v''=\sum_{\xi=1}^n\beta_\xi T_5^{\xi}.$
Returning to $yT_5^{s_0}=\Omega_{1}v'',$ we have that $y T_5^{s_0}=\Omega_{1}\sum_{\xi=1}^n\beta_\xi T_5^{\xi}.$ This implies that $y T_5^{s_0-1}=\Omega_{1}w,$
where $w=\sum_{\xi=1}^n\beta_\xi T_5^{\xi-1}\in A^{(6)}.$ As a result,
$s_0-1\in S,$ a contradiction! Therefore, $s_0=0$ and $y=\Omega_{1}v''\in\langle\Omega_{1}\rangle_{A^{(6)}}.$ Consequently, 
$\langle\Omega_1'\rangle_{A^{(5)}}[T_5^{-1}]\cap A^{(6)}\subseteq \langle\Omega_1\rangle_{A^{(6)}}$ as desired. 

5.
We want to show that $\langle\Omega_1\rangle_{A^{(6)}}[T_6^{-1}]\cap A=\langle\Omega_1\rangle_{A}.$\newline Obviously, $\langle\Omega_1\rangle_{A}\subseteq \langle\Omega_1\rangle_{A^{(6)}}[T_6^{-1}]\cap A.$ We establish the reverse inclusion.
Let $y\in \langle\Omega_1\rangle_{A^{(6)}}[T_6^{-1}]\cap A.$ This implies that
$y\in \langle\Omega_1\rangle_{A^{(6)}}[T_6^{-1}].$ There exists $i\in\mathbb{N}$ such that $yT_6^i\in \langle\Omega_1\rangle_{A^{(6)}}.$ Hence, $yT_6^i=\Omega_1v,$ for some $v\in A^{(6)}.$ Furthermore, since $A^{(6)}[T_6^{-1}]=A[T_6^{-1}],$ there exists $j\in\mathbb{N}$ such that 
$vT_6^j=v',$ for some $v'\in A.$ It follows from $yT_6^i=\Omega_1v$  that $yT_6^{i+j}=\Omega_1vT_6^j=\Omega_1v'.$ Hence, $yT_6^\delta=\Omega_1v',$ where
$\delta=i+j.$  Set $S:=\{s\in\mathbb{N}\mid \exists v'\in A : y T_6^{s}=\Omega_{1}v'\}.$ Note, 
$\delta\in S,$ hence $S$ is non-empty. Let $s=s_0$ be the minimum element of $S$ such that $ y T_6^{s_0}=\Omega_{1}v'.$ We want to show that $s_0=0.$
Now, $A$ can be viewed as a free
$\k\langle E_{1},E_{2},E_{3},E_{4},T_5\rangle-$module with basis $\left(T_6^\xi\right) _{\xi\in\mathbb{N}}.$ One can write $y=\sum_{\xi=0}^n\alpha_\xi T_6^{\xi}$
and $v'=\sum_{\xi=0}^n\beta_\xi T_6^{\xi},$ where $\alpha_\xi,\beta_\xi\in \k\langle E_{1},E_{2},E_{3},E_4,T_5\rangle.$ With this, $y T_6^{s_0}=\Omega_{1}v'$ implies that $\sum_{\xi=0}^n\alpha_\xi T_6^{\xi+s_0}=
\sum_{\xi=0}^n\beta_\xi\Omega_1T_6^{\xi}.$ 
Suppose 
that $s_0>0.$ Then, identifying constant coefficients, we have that $\beta_0 \Omega_1=0.$ As a result, 
$\beta_0=0,$ since $\Omega_1\neq 0.$ One can therefore write $v'=\sum_{\xi=1}^n\beta_\xi T_6^{\xi}.$
Returning to $yT_6^{s_0}=\Omega_{1}v',$ we have that $y T_6^{s_0}=\Omega_{1}\sum_{\xi=1}^n\beta_\xi T_6^{\xi}.$ This implies that $y T_6^{s_0-1}=\Omega_{1}v^{\prime\prime},$
where $v^{\prime\prime}=\sum_{\xi=1}^n\beta_\xi T_6^{\xi-1}\in A.$ Hence,
$s_0-1\in S,$ a contradiction! Therefore, $s_0=0$ and $y=\Omega_{1}v'\in\langle\Omega_{1}\rangle_{A}.$ Consequently, 
$\langle\Omega_1\rangle_{A^{(6)}}[T_6^{-1}]\cap A\subseteq \langle\Omega_1\rangle_{A}$ as desired.}
\end{proof}

Using similar techniques, one can prove that $\langle T_2\rangle\in$ Im$(\psi)$ and that  $\langle\Omega_2\rangle$ is the completely prime ideal of $A$ such that $\psi (\langle\Omega_2\rangle)=\langle T_2\rangle$. Again, we refer the interested reader to \cite{io-thesis} for details. We record these facts in the following lemma.
\begin{lemma} 
\label{lem2}
 $\langle T_2\rangle\in \mathrm{Im}(\psi)$ and  $\langle\Omega_2\rangle$ is the completely prime ideal of $A$ such that $\psi (\langle\Omega_2\rangle)=\langle T_2\rangle.$
\end{lemma}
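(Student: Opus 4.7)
The strategy is to mirror the proof of Lemma~\ref{lem1}, tracing $\langle T_2 \rangle$ through the successive contractions defined by the deleting derivations algorithm and verifying at each stage that the corresponding ideal is completely prime. Since $\Omega_2 = T_2 T_4 T_6$ and the intermediate localizations invert the $T_j$'s for $j \geq 3$, the factor $T_2$ is the essential piece that survives into $\overline{A}$, while the factors $T_4$ and $T_6$ become units at suitable intermediate stages.

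Concretely, I would build the following chain of primes. In $\overline{A} = A^{(3)}$, start with $\langle T_2 \rangle$; this is prime because $\overline{A}/\langle T_2 \rangle$ is a rank-$5$ quantum affine space in the remaining $T_i$'s, hence a domain. In $A^{(4)}$ keep $\langle T_2 \rangle$, since $T_2 = E_{2,3} = E_{2,4}$. In $A^{(5)}$ the ideal becomes $\langle \Omega_2' \rangle$ with $\Omega_2' := E_{2,5} E_{4,5} + b E_{3,5}^3$, using $T_2 = E_{2,5} + b E_{3,5}^3 E_{4,5}^{-1}$ so that $T_2 T_4 = \Omega_2'$. In $A^{(6)}$ the generator is again $\Omega_2'$, since $E_{2,5} E_{4,5} + b E_{3,5}^3 = E_{2,6} E_{4,6} + b E_{3,6}^3$ (both equal $\Omega_2 T_6^{-1}$ after inverting $T_6$). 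Finally in $A = A^{(7)}$ one recovers $\langle \Omega_2 \rangle_A$, using the identity $\Omega_2 = \Omega_2' T_6$ in $A^{(6)}$, which makes $\Omega_2'$ and $\Omega_2$ generate the same ideal after inverting $T_6$.

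At each transition from $A^{(j)}$ to $A^{(j+1)}$ for $j \in \{3,4,5,6\}$, I would verify that the claimed generator in $A^{(j+1)}$ equals the contraction to $A^{(j+1)}$ of its extension into $A^{(j+1)}[T_j^{-1}] = A^{(j)}[T_j^{-1}]$. The argument follows exactly the pattern of Lemma~\ref{lem1}: view $A^{(j+1)}$ as a free left module over the $\k$-subalgebra generated by the remaining $E_{i,j+1}$'s, with basis $(T_j^{\xi})_{\xi \in \N}$; assume for contradiction that the minimal power of $T_j$ needed to push a given element into the alleged ideal is strictly positive; write the element and its witness in this basis; commute the generator across using the normality of $\Omega_2'$ in $A^{(j)}$ (which follows from centrality of $\Omega_2$, normality of $T_6$, and the relation $\Omega_2' T_6 = \Omega_2$); and then match constant coefficients in $T_j$ to extract a contradiction, exactly as in Steps 2--5 of the previous proof.

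The main obstacle is checking, at each of the four transitions, that the constant term in the expansion along $T_j$ is genuinely non-zero, so that the matching-coefficients argument forces $\beta_0 = 0$ and allows the induction on the minimal power to go through. This is where the explicit non-vanishing of the scalars $a, b$ from Appendix~\ref{appenc} enters, together with the fact that $\Omega_2$ has a non-trivial $T_j$-free part at each stage (e.g.\ the term $b E_{3,6}^3$ at level $6$ ensures the constant $T_5$-coefficient of $\Omega_2'$ is non-zero). Once these checks are completed, the resulting chain simultaneously shows that $\langle \Omega_2 \rangle_A$ is completely prime and that $\psi_6 \circ \cdots \circ \psi_3 (\langle \Omega_2 \rangle_A) = \langle T_2 \rangle$, which is the desired conclusion.
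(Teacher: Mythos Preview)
Your proposal is correct and mirrors exactly the approach the paper indicates (``similar techniques'' to Lemma~\ref{lem1}, with details deferred to \cite{io-thesis}): you trace the prime $\langle T_2\rangle$ through the inverse contractions $\psi_3^{-1},\dots,\psi_6^{-1}$, obtaining the chain $\langle T_2\rangle_{A^{(3)}}$, $\langle T_2\rangle_{A^{(4)}}$, $\langle \Omega_2'\rangle_{A^{(5)}}$, $\langle \Omega_2'\rangle_{A^{(6)}}$, $\langle \Omega_2\rangle_A$ with $\Omega_2'=E_{2,5}T_4+bT_3^3=E_{2,6}T_4+bE_{3,6}^3$, and the minimal-exponent argument at each step is identical in structure to Steps~2--5 of Lemma~\ref{lem1}. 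The only cosmetic point is that $\overline{A}=A^{(2)}$ rather than $A^{(3)}$, but since $\delta_2=0$ these algebras coincide and $\psi_2$ acts trivially on $\langle T_2\rangle$, so nothing is lost.
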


Since $\ch$-$\spec (\uq)$ is homeomorphic to the Weyl group $W$ of type $G_2$ by \cite{my2}, there are only 2 $\ch$-primes in $\uq$ of height 1. Since $\Omega_1$ and $\Omega_2$ are central, the prime ideals that they generate have height less than or equal to 1, and so equal to 1. As an immediate consequence, we get the following result.

 \begin{lemma}
 \label{rem1}
 \begin{enumerate}
\item $\langle\Omega_1\rangle$ and $\langle\Omega_2\rangle$ are the only height one $\ch$-invariant prime ideals of $A$.
 \item Every non-zero $\ch$-invariant prime ideal of $A$ contains either $\langle\Omega_1\rangle$ or $\langle\Omega_2\rangle$.
\end{enumerate}
 \end{lemma}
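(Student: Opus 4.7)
The plan is to deduce both claims from Lemmas~\ref{lem1} and~\ref{lem2} together with the structural facts recalled in \S2.2 about $\ch$-$\spec(\uq)$, namely that it is a finite poset of cardinality $|W|=12$ containing exactly two $\ch$-primes of height one.

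For Part~(1), I would first check $\ch$-invariance: since $\Omega_1=T_1T_3T_5$ and $\Omega_2=T_2T_4T_6$ are monomials in the $\ch$-eigenvectors $T_i$, they themselves are $\ch$-eigenvectors, so the two-sided ideals they generate are $\ch$-stable. Lemmas~\ref{lem1} and~\ref{lem2} give that these ideals are completely prime, and hence $\ch$-prime. As $A$ is a Noetherian domain and the $\Omega_i$ are non-zero central elements, Krull's Hauptidealsatz forces $\operatorname{ht}\langle\Omega_i\rangle\leq 1$, and non-triviality gives equality. Part~(1) then follows by comparison of cardinalities: there are exactly two $\ch$-primes of height one, and we have exhibited two distinct such ideals.

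For Part~(2), I would invoke the poset-theoretic refinement of the homeomorphism $\ch$-$\spec(\uq)\cong W$ coming from \cite{my2,meriaux}: this identification is an order isomorphism between the inclusion order on $\ch$-$\spec(\uq)$ and the Bruhat order on $W$, with the zero ideal corresponding to the identity $e\in W$. Under this identification the atoms are exactly the simple reflections, and in type $G_2$ there are precisely two of them, which by Part~(1) must correspond to $\langle\Omega_1\rangle$ and $\langle\Omega_2\rangle$. Since every non-identity $w\in W$ satisfies $s_i\leq w$ for some simple reflection $s_i$ (take the first letter of any reduced expression of $w$), every non-zero $\ch$-prime of $A$ contains at least one of $\langle\Omega_1\rangle$, $\langle\Omega_2\rangle$.

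The main obstacle is that Part~(2) does not follow from the height-one count alone: \emph{a priori}, a non-zero $\ch$-prime could be incomparable (in the inclusion order) to both height-one $\ch$-primes, in which case it would contain neither $\Omega_1$ nor $\Omega_2$. What rules this out is the refinement that the inclusion order on $\ch$-$\spec(\uq)$ matches the Bruhat order on $W$; this is the genuine non-trivial ingredient, and is already available from the cited works of Meriaux and Yakimov, so no additional work is required.
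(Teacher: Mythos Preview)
Your proof is correct and follows essentially the same approach as the paper. The paper's argument is extremely terse---it simply notes that $\ch$-$\spec(\uq)$ is homeomorphic to $W$ (citing \cite{my2}), that centrality of the $\Omega_i$ forces $\operatorname{ht}\langle\Omega_i\rangle\leq 1$, and then declares both parts to be ``an immediate consequence''---but the underlying reasoning is precisely what you have spelled out. Your explicit identification of the Bruhat-order isomorphism as the genuine ingredient for Part~(2) is a useful clarification of something the paper only makes explicit later (in the proof of Theorem~\ref{c3p29}, where it writes ``as the poset of $\ch$-primes is isomorphic to $W$'').

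One small remark: your justification of $\ch$-invariance via ``$\Omega_i$ are monomials in the $T_i$'' is slightly oblique, since the $T_i$ live in the fraction field rather than in $A$ itself. A more direct route is to observe that each $E_i$ is an $\ch$-eigenvector (they are homogeneous for the root-lattice grading, which is exactly the grading induced by the $\ch$-action), and then check that all monomials in the expression $\Omega_1 = E_1E_3E_5 + aE_1E_4 + aE_2E_5 + a'E_3^2$ share the same weight $4\alpha+2\beta$; similarly for $\Omega_2$. Either way the conclusion is the same.
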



\subsection{Description of the $0$-stratum and beyond} 
In this section, we will often assume that our base field $\k$ is algebraically closed. This assumption is actually not necessary for the main result of this section, Theorem \ref{c3p29}, but makes the description of the $0$-stratum easier to present. 

This section focuses on finding the height two maximal ideals of $A=\uq$. Note first that such ideals can only belong to the $\ch$-stratum of an $\ch$-prime of height less than or equal to 1 (since $\ch$-$\spec(A)$ is isomorphic to $W$).  It follows from the previous sections that we need to compute the $\ch$-strata of 3 $\ch$-primes: $0$, $\langle\Omega_1\rangle$ and  $\langle\Omega_2\rangle$. We start with the $0$-stratum.

The strategy is similar to \cite[Propositions 2.3 and 2.4]{sl}. Note, in this subsection, all ideals in $A$ will simply be written as $\langle \Theta\rangle,$ where $\Theta\in A.$ However, if we want to refer to an ideal in any other algebra, say $R$, then that ideal will be written as $\langle \Theta\rangle_R,$ where in this case, $\Theta\in R.$ 
  
\begin{proposition}
\label{p0}
Assume $\k$ is algebraically closed. Let $\mathcal{P}$ be the set of those unitary irreducible polynomials 
$P(\Omega_1,\Omega_2)\in \k[\Omega_1,\Omega_2]$
 with $P(\Omega_1,\Omega_2)\neq \Omega_1$ and $P(\Omega_1,\Omega_2)\neq \Omega_2$. Then,  $\spec_{\langle 0\rangle}(A)=
\{\langle 0\rangle\}\cup \{\langle P(\Omega_1,\Omega_2)\rangle\mid P(\Omega_1,\Omega_2)\in \mathcal{P}\}\cup \{\langle \Omega_1-\alpha,\Omega_2-\beta\rangle\mid\alpha,\beta\in \k^*\}.$
 \end{proposition}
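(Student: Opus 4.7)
The plan is to parametrize $\spec_{\langle 0\rangle}(A)$ via the centre $Z(A)=\k[\Omega_1,\Omega_2]$, transport the problem to the quantum torus $R_1$, and enumerate the primes of the resulting commutative Laurent polynomial ring using the hypothesis that $\k$ is algebraically closed.

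First I would observe that $P \in \spec_{\langle 0\rangle}(A)$ if and only if $\Omega_1,\Omega_2 \notin P$. Indeed, $(P{:}\ch)$ is an $\ch$-prime of $A$ contained in $P$, and by Lemma \ref{rem1} any non-zero $\ch$-prime contains $\Omega_1$ or $\Omega_2$; conversely, since each $\Omega_i$ is $\ch$-invariant, $\Omega_i \in P$ forces $\Omega_i \in (P{:}\ch)$. Thus $\spec_{\langle 0\rangle}(A)$ is exactly the set of primes of $A$ missing the two central elements $\Omega_1,\Omega_2$.

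Next, using the tower \eqref{emb} and iterating the standard Ore localization correspondence along $A = R_7 \subset R_6 \subset \cdots \subset R_1$, primes of $R_1$ are in inclusion-preserving bijection with those primes of $A$ whose extensions through the tower never absorb any of the normal elements $T_j$. Combining this with Cauchon's canonical embedding $\psi$ and the identities $\Omega_1 = T_1T_3T_5$, $\Omega_2 = T_2T_4T_6$ in $\overline{A}$, one checks that this condition is equivalent to $\Omega_1,\Omega_2 \notin P$. Hence we obtain a homeomorphism $\spec_{\langle 0\rangle}(A) \cong \spec(R_1)$. Now $R_1$ is a quantum torus and, by Lemma \ref{rlr}, $Z(R_1) = \k[\Omega_1^{\pm 1},\Omega_2^{\pm 1}]$. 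Applying the standard result that primes of such a quantum torus are in bijection with primes of its centre via contraction---a consequence of \cite[1.3]{gz2} together with the Goodearl-Letzter stratification theorem \cite[II.6.4]{bg} (the algebra $R_1$ being $\ch$-simple)---we arrive at a homeomorphism $\spec(R_1) \cong \spec(\k[\Omega_1^{\pm 1},\Omega_2^{\pm 1}])$.

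With $\k$ algebraically closed, the prime spectrum of $\k[\Omega_1^{\pm 1},\Omega_2^{\pm 1}]$ consists exactly of $\langle 0\rangle$, the height-one primes $\langle P(\Omega_1,\Omega_2)\rangle$ for irreducible $P \in \mathcal{P}$, and the maximal ideals $\langle \Omega_1-\alpha,\Omega_2-\beta\rangle$ for $(\alpha,\beta)\in (\k^*)^2$. The composition of the bijections above sends each such $\mathfrak{p}$ to $\mathfrak{p}R_1 \cap A$, so it remains to identify these contractions with the ideals listed in the proposition.

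The principal obstacle I expect is precisely this last verification: one must show that $\mathfrak{p}R_1 \cap A = \mathfrak{p}A$ for each of the three families of central primes $\mathfrak{p}$. The case $\mathfrak{p}=\langle 0\rangle$ is automatic. For the remaining two families, I would proceed by induction along the tower \eqref{emb}, establishing at each step $R_{j+1}\hookrightarrow R_j$ that $\mathfrak{p}R_j \cap R_{j+1} = \mathfrak{p}R_{j+1}$ via a PBW-type argument exploiting the normality of $T_j$ in $R_{j+1}$ and the centrality of the generators of $\mathfrak{p}$. The strategy is entirely analogous to the step-by-step argument in the proof of Lemma \ref{lem1}: one writes an element of the contraction in the PBW basis with respect to the inverted variable $T_j$, then iteratively forces the lowest-degree coefficient to vanish using the irreducibility of the central generators and the freeness of the module structure.
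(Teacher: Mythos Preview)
Your approach is correct in outline but takes a longer route than the paper's, and the detour creates extra work at exactly the place you flag as the ``principal obstacle.''

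The paper does not pass through the full quantum torus $R_1$. Instead, after establishing (as you do) that $\spec_{\langle 0\rangle}(A)=\{Q:\Omega_1,\Omega_2\notin Q\}$, it localizes $A$ directly at the \emph{central} multiplicative set $\{\Omega_1^i\Omega_2^j\}$ to form $R:=A[\Omega_1^{-1},\Omega_2^{-1}]$. This $R$ is still $\ch$-simple, has $Z(R)=\k[\Omega_1^{\pm1},\Omega_2^{\pm1}]$, and the same stratification argument (\cite[Corollary II.3.9]{bg}) gives $\spec(R)\cong\spec(Z(R))$. The payoff comes at the contraction step: because the only denominators are powers of the central elements $\Omega_1,\Omega_2$, showing $\mathfrak{p}R\cap A=\mathfrak{p}A$ reduces to a short direct argument. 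Given $y\Omega_1^i\Omega_2^j=aP(\Omega_1,\Omega_2)$ with $(i,j)$ minimal, one uses that $\langle\Omega_1\rangle$ and $\langle\Omega_2\rangle$ are completely prime (Lemmas~\ref{lem1} and~\ref{lem2}) together with $P\notin\langle\Omega_1\rangle,\langle\Omega_2\rangle$ to force $a\in\langle\Omega_i\rangle$ and derive a contradiction to minimality. No PBW bases, no tower induction.

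Your route through $R_1$ is valid, but it obliges you to (i) justify carefully that the $0$-stratum coincides with the set of primes surviving every localization $\Sigma_j^{-1}$ along the tower---this is true via Cauchon's theory, but your sentence ``one checks that this condition is equivalent to $\Omega_1,\Omega_2\notin P$'' compresses a nontrivial identification of the $0$-stratum with the empty-Cauchon-diagram stratum---and (ii) carry out a six-step contraction induction, each step in the style of Lemma~\ref{lem1}. That works, but the paper's single central localization collapses all of this into one paragraph by exploiting the complete primality of $\langle\Omega_i\rangle$ that was already established.
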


\begin{proof}
We claim that Spec$_{\langle 0\rangle}(A)=\{Q \in \text{Spec}(A)\mid\Omega_1,\Omega_2\not\in Q\}.$ To establish this claim, let us assume that this is not the case. That is, suppose there exists $ Q\in$ Spec$_{\langle 0\rangle}(A)$ such that $\Omega_1,\Omega_2\in Q;$ then the product $\Omega_1\Omega_2$ which is an $\mathcal{H}-$eigenvector belongs to $Q.$ Consequently, $\Omega_1\Omega_2\in \bigcap_{h\in \mathcal{H}}h\cdot Q=\langle 0\rangle,$ a contradiction. Hence, we have shown that  Spec$_{\langle 0\rangle}(A)\subseteq \{Q \in \text{Spec}(A)\mid\Omega_1,\Omega_2\not\in Q\}.$ Conversely, suppose that $Q\in$ Spec$(A)$  such that $\Omega_1,\Omega_2\not\in Q,$ then $\bigcap_{h\in \mathcal{H}}h\cdot Q$ is an $\mathcal{H}-$invariant prime ideal of $A,$ which contains neither $\Omega_1$ nor $\Omega_2.$ Obviously, the only possibility for $\bigcap_{h\in \mathcal{H}}h\cdot Q$ is $\langle0\rangle$ since every non-zero $\mathcal{H}-$invariant prime ideal contains at least $\Omega_1$ or $\Omega_2$. Thus, $\bigcap_{h\in \mathcal{H}}h\cdot Q=\langle 0\rangle.$ Hence, $Q\in$ Spec$_{\langle 0\rangle}(A).$ Therefore, $\{Q \in \text{Spec}(A)\mid\Omega_1,\Omega_2\not\in Q\}\subseteq$ Spec$_{\langle 0\rangle}(A).$ This confirms our claim.

Since $\Omega_1,\Omega_2\in Z(A),$ we have that the set $\{\Omega_1^i\Omega_2^j\mid i,j\in\mathbb{N}\}$ is a right denominator set in the noetherian domain $A.$ One can now localize $A$ as $R:=A[\Omega_1^{-1},\Omega_2^{-1}].$ Let
$Q\in$ Spec$_{\langle 0\rangle}(A),$ the map
 $\phi:Q\longrightarrow 
Q[\Omega_1^{-1},\Omega_2^{-1}]$ is an increasing bijection from 
Spec$_{\langle 0\rangle}(A)$ onto Spec$(R).$

Since $\Omega_1$ and $\Omega_2$ are $\mathcal{H}-$eigenvectors, and  $\mathcal{H}$ acts on
$A$, we have that $\mathcal{H}$ also acts on $R.$
Since every $\ch$-prime ideal of $A$ contains $\Omega_1$ or $\Omega_2$, one can easily check that $R$ is $\mathcal{H}-$simple (in the sense that the only $\ch$-invariant ideal of $R$ is the $0$ ideal).

We proceed to describe Spec$(R)$ and  Spec$_{\langle 0\rangle}(A).$  We deduce from \cite[Exercise II.3.A]{bg} that the action of $\mathcal{H}$ on $R$ is rational. This rational action coupled with $R$ being $\mathcal{H}-$simple implies that the extension and contraction maps provide a mutually inverse bijection between Spec$(R)$ and Spec$(Z(R))$ \cite[Corollary II.3.9]{bg}. From Lemma \ref{c3l1}, $Z(A)=\k[\Omega_1,\Omega_2],$ and so $Z(R)=\k[\Omega_1^{\pm 1},\Omega_2^{\pm 1}].$ Since $\k$ is algebraically closed, we have that 
Spec$(Z(R))=\{\langle 0\rangle_{Z(R)}\}\cup \{\langle P(\Omega_1,\Omega_2)\rangle_{Z(R)}\mid P(\Omega_1,\Omega_2)\in \mathcal{P}\}\cup \{\langle \Omega_1-\alpha,\Omega_2-\beta\rangle_{Z(R)}\mid\alpha,\beta\in \k^*\}.$ Since there is an inverse bijection between Spec$(R)$ and Spec$(Z(R)),$ and also 
$R$ is $\mathcal{H}-$simple, one can recover Spec$(R)$ from Spec$(Z(R))$ as follows: 
Spec$(R)=\{\langle 0\rangle_{R}\}\cup \{\langle P(\Omega_1,\Omega_2)\rangle_{R}\mid P(\Omega_1,\Omega_2)\in \mathcal{P}\}\cup \{\langle \Omega_1-\alpha,\Omega_2-\beta\rangle_R\mid\alpha,\beta\in \k^*\}.$ It follows that
Spec$_{\langle 0\rangle}(A)=\{\langle 0\rangle_{R}\cap A\}\cup \{\langle P(\Omega_1,\Omega_2)\rangle_{R}\cap A\mid P(\Omega_1,\Omega_2)\in \mathcal{P}\}\cup \{\langle \Omega_1-\alpha,\Omega_2-\beta\rangle_R\cap A\mid \alpha,\beta\in \k^*\}.$

Undoubtedly, $\langle 0\rangle_{R}\cap A=\langle 0 \rangle.$ We now have to show that $\langle P(\Omega_1,\Omega_2)\rangle_{R}\cap A=\langle P(\Omega_1,\Omega_2)\rangle,$ $\forall P(\Omega_1,\Omega_2)\in \mathcal{P},$ and $\langle \Omega_1-\alpha,\Omega_2-\beta\rangle_R\cap A=\langle \Omega_1-\alpha,\Omega_2-\beta\rangle,$ $\forall \alpha,\beta\in \k^*$ to complete the proof.

Fix $P(\Omega_1,\Omega_2)\in \mathcal{P}.$ Observe that $\langle P(\Omega_1,\Omega_2) \rangle \subseteq \langle P(\Omega_1,\Omega_2)\rangle_{R}\cap A.$ To show the reverse inclusion,
let $y\in \langle P(\Omega_1,\Omega_2)\rangle_{R}\cap A$. This implies that $y=dP(\Omega_1,\Omega_2), $ where $d\in R,$ since $y\in \langle P(\Omega_1,\Omega_2) \rangle_R.$ Also, 
$d\in R$ implies that there exist $ i,j\in \mathbb{N}$ such that $d=a\Omega_1^{-i}\Omega_2^{-j},$ where $a\in A.$ Therefore, 
$y=a\Omega_1^{-i}\Omega_2^{-j}P(\Omega_1,\Omega_2), $ which implies that $y\Omega_1^i\Omega_2^j=aP(\Omega_1,\Omega_2).$ Choose  $(i,j)\in \mathbb{N}^2$ minimal (in the lexicographic order on $\mathbb{N}^2$) such that the equality holds. Without loss of generality,  suppose that $i>0,$ then 
$aP(\Omega_1,\Omega_2)\in \langle\Omega_1\rangle.$ Given that $\langle\Omega_1\rangle$ is a completely prime ideal, it implies that $a\in \langle\Omega_1\rangle$ or $P(\Omega_1,\Omega_2)\in \langle\Omega_1\rangle.$ Since $P(\Omega_1,\Omega_2)\in \mathcal{P},$ it implies that  $P(\Omega_1,\Omega_2)\not\in \langle\Omega_1\rangle,$ hence $a\in \langle \Omega_1\rangle.$ This further implies that $a=t\Omega_1,$ where $ t\in A.$ Returning to $y\Omega_1^i\Omega_2^j=aP(\Omega_1,\Omega_2),$ we have that $y\Omega_1^i\Omega_2^j=t\Omega_1 P(\Omega_1,\Omega_2).$ Therefore, $y\Omega_1^{i-1}\Omega_2^j=tP(\Omega_1,\Omega_2).$ This clearly contradicts the minimality of $(i,j),$ hence $(i,j)=(0,0),$ and 
$y=aP(\Omega_1,\Omega_2)\in\langle P(\Omega_1,\Omega_2)\rangle.$ Consequently,
$\langle P(\Omega_1,\Omega_2)\rangle_{R}\cap A=\langle P(\Omega_1,\Omega_2)\rangle$ for all $P(\Omega_1,\Omega_2)\in \mathcal{P}$ as desired.

Similarly, we show that  $\langle \Omega_1-\alpha,\Omega_2-\beta\rangle_R\cap A=\langle \Omega_1-\alpha,\Omega_2-\beta\rangle;$ $\forall \alpha,\beta\in \k^*.$ Fix $ \alpha,\beta\in \k^*.$ Observe that 
$\langle \Omega_1-\alpha,\Omega_2-\beta\rangle\subseteq\langle \Omega_1-\alpha,\Omega_2-\beta\rangle_R\cap A.$ We establish the reverse inclusion. Let $y\in \langle \Omega_1-\alpha,\Omega_2-\beta\rangle_R \cap A.$ Since $y\in \langle \Omega_1-\alpha,\Omega_2-\beta\rangle_R,$ 
there exist 
 $i,j\in\mathbb{N}$ such that   
$ y\Omega_1^{i}\Omega_2^{j}=m(\Omega_1-\alpha)+n(\Omega_2-\beta),$ where 
$m,n\in A.$  Choose  $(i,j)\in \mathbb{N}^2$ minimal (in the lexicographic order on $\mathbb{N}^2$) such that the equality holds. Without loss of generality, suppose that 
$i>0$ and let $f:A\longrightarrow {A}/{\langle \Omega_2-\beta\rangle}$ be a canonical surjection. 
We have that $f(y)f(\Omega_1)^if(\Omega_2)^j=f(m)f(\Omega_1-\alpha).$ It follows that
$f(m)f(\Omega_1-\alpha)\in \langle f(\Omega_1)\rangle$ and so $\alpha f(m) \in \langle f(\Omega_1)\rangle$. 
Observe that $f(\Omega_1-\alpha)\not\in \langle f(\Omega_1)\rangle,$  hence $f(m)\in \langle f(\Omega_1)\rangle.$  As $\alpha \neq 0$, we obtain the existence of $\lambda\in A$ such that $f(m)=f(\lambda)f(\Omega_1).$
Consequently, $f(y)f(\Omega_1)^if(\Omega_2)^j=f(\lambda)f(\Omega_1)f(\Omega_1-\alpha).$  Since
$f(\Omega_1)\neq 0,$ it implies that $f(y)f(\Omega_1)^{i-1}f(\Omega_2)^j=f(\lambda)f(\Omega_1-\alpha).$  Therefore, $y\Omega_1^{i-1}\Omega_2^j=\lambda(\Omega_1-\alpha)+\lambda^\prime(\Omega_2-\beta)$ for some $\lambda^\prime\in A.$  This contradicts the minimality of $(i,j).$ Hence, $(i,j)=(0,0)$ and so $y=m(\Omega_1-\alpha)+n(\Omega_2-\beta)\in \langle \Omega_1-\alpha,\Omega_2-\beta\rangle.$ In conclusion, $\langle \Omega_1-\alpha,\Omega_2-\beta\rangle_R\cap A=\langle \Omega_1-\alpha,\Omega_2-\beta\rangle,$ $\forall \alpha,\beta\in \k^*.$  
\end{proof}

Using similar techniques, we obtain the following description for the $\ch$-strata of $\langle\Omega_1\rangle$ and $\langle\Omega_2\rangle$.

\begin{proposition}
\label{p1}
Assume $\k$ is algebraically closed.
\begin{enumerate} 
\item $\spec_{\langle\Omega_1\rangle}(A)=
\{\langle\Omega_1\rangle\}\cup \{\langle\Omega_1,\Omega_2-\beta\rangle\mid \beta\in \k^*\}.$ 
\item $\spec_{\langle\Omega_2\rangle}(A)=
\{\langle\Omega_2\rangle\}\cup \{\langle\Omega_1-\alpha,\Omega_2\rangle\mid\alpha\in \k^*\}.$ 
\end{enumerate}
\end{proposition}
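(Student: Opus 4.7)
The plan is to mirror the strategy of Proposition \ref{p0}, but to work in the quotient algebra $\bar A := A/\langle\Omega_1\rangle$ (respectively $A/\langle\Omega_2\rangle$) rather than in $A$ itself. I would establish part (1); part (2) then follows by the symmetric argument with the roles of $\Omega_1$ and $\Omega_2$ interchanged.

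The first step is to identify the stratum as
$$\spec_{\langle\Omega_1\rangle}(A) = \{Q \in \spec(A) \mid \Omega_1 \in Q \text{ and } \Omega_2 \notin Q\}.$$
For the inclusion ``$\subseteq$'', if $Q$ lies in this stratum then $\langle\Omega_1\rangle = (Q:\ch) \subseteq Q$, and if in addition $\Omega_2 \in Q$ then, since $\Omega_2$ is an $\ch$-eigenvector, one has $\Omega_2 \in h\cdot Q$ for every $h \in \ch$, forcing $\Omega_2 \in \langle\Omega_1\rangle$, which contradicts Lemma \ref{rem1}. Conversely, if $\Omega_1 \in Q$ and $\Omega_2 \notin Q$, then $(Q:\ch)$ is an $\ch$-prime of $A$ containing $\Omega_1$ but not $\Omega_2$, and Lemma \ref{rem1} forces $(Q:\ch) = \langle\Omega_1\rangle$. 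This yields a bijection between $\spec_{\langle\Omega_1\rangle}(A)$ and $\{\bar Q \in \spec(\bar A) \mid \bar\Omega_2 \notin \bar Q\}$, where $\bar\Omega_2$ denotes the image of $\Omega_2$ in $\bar A$ (nonzero since $\Omega_2 \notin \langle\Omega_1\rangle$).

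Since $\bar\Omega_2$ is central and regular in the domain $\bar A$ (complete primality of $\langle\Omega_1\rangle$ coming from Lemma \ref{lem1}), I would then form the Ore localization $\bar R := \bar A[\bar\Omega_2^{-1}]$, so that primes of $\bar R$ correspond bijectively with primes of $\bar A$ avoiding $\bar\Omega_2$. The action of $\ch$ descends to $\bar A$ (as $\langle\Omega_1\rangle$ is $\ch$-invariant) and remains rational after inverting the eigenvector $\bar\Omega_2$; moreover $\bar R$ is $\ch$-simple, since any nonzero $\ch$-invariant ideal of $\bar R$ would contract to an $\ch$-invariant prime of $A$ properly containing $\langle\Omega_1\rangle$ and not containing $\Omega_2$, of which there are none by Lemma \ref{rem1}. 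I would then invoke the Goodearl--Letzter correspondence \cite[Corollary II.3.9]{bg} to obtain a bijection $\spec(\bar R) \leftrightarrow \spec(Z(\bar R))$.

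The main technical obstacle is to identify $Z(\bar R) = \k[\bar\Omega_2^{\pm 1}]$. The inclusion ``$\supseteq$'' is immediate; for the reverse, I would decompose a central element into $\ch$-eigenvectors (each of which is itself central, by rationality of the action) and then perform a direct computation in a PBW-type basis of $\bar A$, analogous to the argument used for Lemma \ref{rlr}(2), to show that the only central $\ch$-eigenvectors of $\bar R$ are scalar multiples of powers of $\bar\Omega_2$. With $\k$ algebraically closed, this yields $\spec(\bar R) = \{\langle 0\rangle_{\bar R}\} \cup \{\langle \bar\Omega_2 - \beta\rangle_{\bar R} \mid \beta \in \k^*\}$. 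Pulling back along $\bar A \hookrightarrow \bar R$ requires checking $\langle\bar\Omega_2 - \beta\rangle_{\bar R} \cap \bar A = \langle\bar\Omega_2 - \beta\rangle_{\bar A}$; this is the direct analogue of the minimality argument in the last paragraph of Proposition \ref{p0}, applicable because $\bar\Omega_2$ is central in $\bar A$ and $\bar\Omega_2 - \beta \notin \langle\bar\Omega_2\rangle$ when $\beta \neq 0$. Lifting these ideals back to $A$ via the quotient $A \twoheadrightarrow \bar A$ then gives the stated description of $\spec_{\langle\Omega_1\rangle}(A)$, and part (2) is obtained by the symmetric argument interchanging $\Omega_1$ and $\Omega_2$ throughout.
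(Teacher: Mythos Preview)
Your outline is correct and is precisely the adaptation of the proof of Proposition~\ref{p0} that the paper intends when it writes ``using similar techniques'' (no further details are given there). One minor point: in the converse inclusion of your first step, Lemma~\ref{rem1} by itself does not force $(Q:\ch)=\langle\Omega_1\rangle$; you also need that any $\ch$-prime strictly containing $\langle\Omega_1\rangle$ contains $\Omega_2$, which the paper obtains from the poset isomorphism $\ch\text{-}\spec(A)\cong W$ of~\cite{my2} (as invoked in the proof of Theorem~\ref{c3p29}).
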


Since maximal ideals in their stratum are primitive for a QNA, we obtain the following result. 

\begin{corollary}
Assume $\k$ is algebraically closed and let  $(\alpha, \beta)\in \k^2\setminus \{(0,0)\}.$  The ideal $\langle \Omega_1-\alpha,\Omega_2-\beta\rangle$ of $A$ is  primitive.
\end{corollary}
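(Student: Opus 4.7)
The plan is to invoke the theorem of Goodearl and Letzter already mentioned in the text (Theorem II.8.4 of \cite{bg}): in a QNA, the primitive ideals are precisely the primes that are maximal in their own $\ch$-stratum. Since $\uq$ is a QNA, it suffices to locate $\langle \Omega_1 - \alpha, \Omega_2 - \beta \rangle$ in the appropriate stratum and then check maximality inside it. The descriptions of the relevant strata are already given by Propositions \ref{p0} and \ref{p1}, so no further hard work is required.

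I would split into three cases according to whether $\alpha$ and $\beta$ vanish. If $\alpha, \beta \in \k^*$, then by Proposition \ref{p0} the ideal lies in $\spec_{\langle 0\rangle}(A)$; if $\alpha = 0$, $\beta \in \k^*$, then by Proposition \ref{p1}(1) it lies in $\spec_{\langle \Omega_1\rangle}(A)$; and symmetrically for the remaining case using Proposition \ref{p1}(2). In each case, the proposition provides a complete list of the primes in the stratum, and one sees by inspection that $\langle \Omega_1 - \alpha, \Omega_2 - \beta\rangle$ sits at the top:
\begin{itemize}
\item In the $0$-stratum, the only other primes are $\langle 0\rangle$ and the principal ideals $\langle P(\Omega_1,\Omega_2)\rangle$ for $P \in \mathcal{P}$; by the bijection (used in the proof of Proposition \ref{p0}) between $\spec_{\langle 0\rangle}(A)$ and $\spec(Z(R))$, where $R = A[\Omega_1^{-1}, \Omega_2^{-1}]$, these correspond to primes of height at most $1$ in $Z(R) = \k[\Omega_1^{\pm 1}, \Omega_2^{\pm 1}]$, whereas $\langle \Omega_1 - \alpha, \Omega_2 - \beta\rangle$ corresponds to a maximal ideal of $Z(R)$.
\item In the $\langle \Omega_1\rangle$-stratum, the only prime listed besides $\langle \Omega_1, \Omega_2 - \beta\rangle$ is the strictly smaller $\langle \Omega_1\rangle$ itself; symmetrically for $\langle \Omega_2\rangle$.
\end{itemize}
In all three cases, this shows $\langle \Omega_1 - \alpha, \Omega_2 - \beta\rangle$ is maximal in its stratum, and thus primitive by Theorem II.8.4 of \cite{bg}.

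There is no serious obstacle: the heavy lifting has all been done in Lemma \ref{c3l1} (identification of the centre) and Propositions \ref{p0}--\ref{p1} (description of the strata). The only subtlety is making sure we treat the degenerate cases $\alpha = 0$ or $\beta = 0$ separately, since in those cases the ideal no longer lives in the $0$-stratum but in one of the two height-$1$ $\ch$-strata; the hypothesis $(\alpha,\beta) \neq (0,0)$ ensures exactly one of the three cases applies.
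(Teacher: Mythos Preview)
Your proposal is correct and follows exactly the paper's approach: the paper simply states ``Since maximal ideals in their stratum are primitive for a QNA, we obtain the following result,'' and your argument spells out precisely this, case-splitting on which of $\alpha,\beta$ vanishes and reading off maximality in the stratum from Propositions \ref{p0} and \ref{p1}.
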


\begin{remark}
The statement of the above corollary is still valid without the assumption that $\k$ is algebraically closed. The proof is actually similar as we only use this assumption to get a full description of the strata we were interested in.  
\end{remark}

We can actually prove a stronger result. 

\begin{theorem}
\label{c3p29}
Let $(\alpha, \beta)\in \k^2\setminus \{(0,0)\}.$ The prime ideal $\langle \Omega_1-\alpha,\Omega_2-\beta\rangle$ of $A$ is maximal. 
\end{theorem}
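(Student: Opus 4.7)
The plan is to show that no prime ideal $P$ of $A$ satisfies $I\subsetneq P\subsetneq A$, where $I:=\langle \Omega_1-\alpha,\Omega_2-\beta\rangle$. Since $I$ is prime by the preceding corollary (and proper when $(\alpha,\beta)\neq(0,0)$), this will prove maximality. Given such a $P$, I would set $J:=(P:\ch)$, the $\ch$-prime indexing the stratum of $P$, and examine the possibilities for $J$ case by case.

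If $J=0$, Proposition \ref{p0} lists all primes in $\spec_0(A)$, and a direct check shows that the only one containing $I$ is $I$ itself (forcing $\alpha,\beta\in\k^*$): no irreducible $P(\Omega_1,\Omega_2)\in\mathcal{P}$ can divide both of the coprime polynomials $\Omega_1-\alpha,\Omega_2-\beta$ in the central polynomial ring $\k[\Omega_1,\Omega_2]$, and the third family of primes requires its parameters to lie in $(\k^*)^2$. If $J=\langle\Omega_1\rangle$, Proposition \ref{p1}(1) provides a similar finite list, and since the presence of a non-zero scalar in $P$ would force $P=A$, containment of $I$ forces $\alpha=0$ and $P=\langle\Omega_1,\Omega_2-\beta\rangle=I$ with $\beta\in\k^*$. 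The case $J=\langle\Omega_2\rangle$ is symmetric.

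The main content is the remaining case, where $J$ has height $\geq 2$. Here I would invoke the bijection of \cite{my2} between $\ch$-$\spec(\uq)$ and the Weyl group $W(G_2)$, under which $\langle\Omega_1\rangle$ and $\langle\Omega_2\rangle$ correspond to the simple reflections $s_1,s_2$ and inclusion of $\ch$-primes matches Bruhat order. Since $W(G_2)$ is dihedral of order $12$, every element of length $\geq 2$ has reduced expressions of the alternating form $s_1s_2s_1\cdots$ or $s_2s_1s_2\cdots$, so both simple reflections appear in any such expression and the element is Bruhat-above both $s_1$ and $s_2$. Consequently every $\ch$-prime of height $\geq 2$ contains both $\langle\Omega_1\rangle$ and $\langle\Omega_2\rangle$, and in particular $\Omega_1,\Omega_2\in J\subseteq P$. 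Together with $\Omega_1-\alpha,\Omega_2-\beta\in I\subseteq P$ this yields $\alpha,\beta\in P$, and since $(\alpha,\beta)\neq(0,0)$, $P$ contains a non-zero scalar, forcing $P=A$ and contradicting $P\subsetneq A$.

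The crux is this last case, which depends on the global $W(G_2)$-structure of the $\ch$-spectrum rather than just on Lemma \ref{rem1}: the alternating pattern of Coxeter words in $G_2$ collapses all height-$\geq 2$ $\ch$-primes above a common floor containing both height-$1$ $\ch$-primes, which is exactly what lets us pull a non-zero scalar into $P$. The low-height cases are more mechanical, reducing to the explicit stratum descriptions already obtained in Propositions \ref{p0} and \ref{p1}.
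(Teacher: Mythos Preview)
Your proposal is correct and follows essentially the same approach as the paper: both take a prime $P$ strictly containing $I$, consider its $\ch$-prime $J=(P:\ch)$, rule out the low-height possibilities $J\in\{0,\langle\Omega_1\rangle,\langle\Omega_2\rangle\}$, and then use the poset isomorphism $\ch\text{-}\spec(A)\cong W(G_2)$ to force $\Omega_1,\Omega_2\in J\subseteq P$ in the height $\geq 2$ case, producing a non-zero scalar in $P$. The only cosmetic difference is that for $J=\langle\Omega_i\rangle$ the paper argues directly that $\Omega_i\in J\subseteq P$ together with $\Omega_i-\alpha_i\in I\subseteq P$ puts the scalar $\alpha_i$ into $P$ (bypassing the explicit stratum listing), whereas you appeal to the full description of those strata from Proposition~\ref{p1}; and you spell out the Bruhat-order reasoning in $W(G_2)$ that the paper leaves implicit.
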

\begin{proof}
Let $(\alpha, \beta)\in \k^2\setminus \{(0,0)\}.$
Suppose that there exists a maximal ideal $I$ of $A$ such that $\langle \Omega_1-\alpha,\Omega_2-\beta\rangle\varsubsetneq I\varsubsetneq A.$ Let $J$ be the $\mathcal{H}-$invariant prime ideal in $A$ such that $I\in \spec_J(A).$ 

We claim that $J$ cannot be $\langle 0\rangle,$  $\langle\Omega_1\rangle$ or $\langle\Omega_2\rangle$. For instance, if $\alpha, \beta \neq 0$, then $J$ cannot be equal to $\langle 0\rangle$ since in this case $\langle \Omega_1-\alpha,\Omega_2-\beta\rangle $ is maximal in the $0$-stratum. Moreover, $J \neq  \langle\Omega_1\rangle$ as otherwise $I$ would contain $\alpha=\Omega_1-(\Omega_1-\alpha)$, a contradiction. The other cases are similar and left to the reader. 

This means that $J$ is an $\ch$-prime of height at least equal to 2. As the poset of $\ch$-primes is isomorphic to $W$, this forces  $J$ to contain both $\Omega_1$ and $\Omega_2.$ Moreover, since $ J\subseteq I,$ it implies that $\Omega_1, \Omega_2\in I.$  Given that $\langle \Omega_1-\alpha,\Omega_2-\beta\rangle\subset I$, we have that $\Omega_1-\alpha,\Omega_2-\beta\in I.$ It follows that $\alpha, \beta\in I,$ hence   $I=A,$ a contradiction!  This confirms that $\langle \Omega_1-\alpha,\Omega_2-\beta\rangle$  is a maximal ideal in $A.$
\end{proof}


\section{Simple quotients of $U_q^+(G_2)$ and their relation to the second Weyl algebra}
\label{c3s1}
Now that we have found primitive ideals of $A=U_q^+(G_2),$ we are going to study the corresponding simple quotient algebras. In view of Dixmier's theorem, we consider these simple quotients as deformations of a Weyl algebra (of appropriate dimension), and so we compare their properties with some known properties of the Weyl algebras. In this section, we prove that the Gelfand-Kirillov dimension of $A_{\alpha,\beta}$ is 4 and consequently prove that the height of the maximal ideal $\langle \Omega_1-\alpha,\Omega_2-\beta\rangle$ is $2$ as expected.   Then we focus on describing a linear basis of $A_{\alpha,\beta}$; we use this basis in the following section to study the derivations of $A_{\alpha,\beta}$.  Finally, we show  that with appropriate choices of $\alpha$ and $\beta,$ the algebra $A_{\alpha,\beta}$ is a quadratic extension of the second Weyl algebra $A_2(\k)$ at $q=1.$ 

Recall from Theorem \ref{c3p29} that  $\Omega_1-\alpha$ and 
$\Omega_2-\beta,$ where $(\alpha,\beta)\in \k^2\setminus \{(0,0)\},$ generate a maximal ideal of $A.$ As a result,  the corresponding quotient 
$$A_{\alpha,\beta}:=\frac{A}{\langle \Omega_1-\alpha,\Omega_2-\beta\rangle}$$ 
is a simple noetherian domain. 
Denote the canonical images of $E_i$ in $A_{\alpha,\beta}$ by $e_i:=E_i+\langle\Omega_1-\alpha, \Omega_2-\beta\rangle$  for all 
$1\leq i\leq 6$. The algebra $A_{\alpha,\beta}$ satisfies the following relations:

\begin{align*}
e_2e_1&=q^{-3}e_1e_2& e_3e_1&=q^{-1}e_1e_3-(q+q^{-1}+q^{-3})e_2\\
e_3e_2&=q^{-3}e_2e_3&e_4e_1&=e_1e_4+(1-q^2)e_3^2\\
e_4e_2&=q^{-3}e_2e_4-\frac{q^4-2q^2+1}{q^4+q^2+1}e_3^3&e_4e_3&=q^{-3}e_3e_4\\
e_5e_1&=qe_1e_5-(1+q^2)e_3&e_5e_2&=e_2e_5+(1-q^2)e_3^2\\
e_5e_3&=q^{-1}e_3e_5-(q+q^{-1}+q^{-3})e_4&e_5e_4&=q^{-3}e_4e_5\\
e_6e_1&=q^3e_1e_6-q^3e_5&e_6e_2&=q^3e_2e_6+(q^4+q^2-1)e_4+(q^2-q^4)e_3e_5\\
e_6e_3&=e_3e_6+(1-q^2)e_5^2&e_6e_4&=q^{-3}e_4e_6-\frac{q^4-2q^2+1}{q^4+q^2+1}e_5^3\\
e_6e_5&=q^{-3}e_5e_6,
\end{align*} and
\begin{align}
e_1e_3e_5+ae_1e_4+ae_2e_5+a'e_3^2&=\alpha,\label{e1e}\\
e_2e_4e_6+be_2e_5^3+be_3^3e_6+b'e_3^2e_5^2+c'e_3e_4e_5+d'e_4^2&=\beta.\label{e2e}
\end{align}

The following additional relations of $A_{\alpha,\beta}$ in the lemma below will be helpful when computing linear basis for $A_{\alpha,\beta}$.  Note, we put constant coefficients of monomials in a square bracket $[ \ ]$ in order to distinguish them from monomials easily. These constants are defined in Appendix \ref{appenc}. 
\begin{lemma}
\label{c3l2}
\begin{align*}
 (1)\ e_3^2=&c_1\alpha+[c_2]e_2e_5+[c_2]e_1e_4+
[c_3]e_1e_3e_5.\\
\\
(2)\ e_4^2=&b_1\beta+[b_2]e_2e_4e_6
+[b_3]e_2e_5^3+[b_4\alpha]e_3e_6+[b_5]e_2e_3
e_5e_6+[b_6]e_1e_3e_4e_6
\\& +[b_7\alpha]e_1e_5e_6+[b_8]e_1e_2e_5^2
e_6+[b_9]e_1^2e_4e_5e_6+[b_{10}]
e_1^2e_3e_5^2e_6+[b_{11}\alpha]e_5^2\\&+
[b_{12}]e_1e_3e_5^3+
[b_{13}]e_3e_4e_5+[b_{14}]e_1e_4e_5
^2.\\
\\
(3)\ e_3^2e_4=& [c_1\alpha] e_4+[q^{-3}c_2]e_2e_4e_5
+[c_2b_4\alpha]e_1e_3e_6
+[b_{15}]e_1e_3e_4e_5
+[\beta b_1c_2]e_1\\&+[c_2b_3]e_1e_2e_5^3+[c_2b_5]e_1e_2e_3
e_5e_6+[c_2b_6]e_1^2e_3e_4e_6+
[c_2b_7\alpha]e_1^2e_5e_6\\&+[c_2b_{11}\alpha]e_1e_5^2+[c_2b_{12}]e_1^2e_3
e_5^3+[c_2b_8]e_1^2e_2e_5
^2e_6+[c_2b_9]e_1^3e_4e_5e_6\\
&+[c_2b_{10}]e_1^3
e_3e_5^2e_6+[c_2b_{14}]e_1^2
e_4e_5^2
+[c_2b_2]e_1e_2e_4e_6.\\
\\ 
(4) \ e_3e_4^2=&
[\beta b_1]e_3+[k_1]e_2e_3
e_4e_6+
[k_2]e_2e_3e_5^3
+[k_3\alpha^2] e_6+[k_4\alpha]
e_2e_5e_6+[k_5\alpha]e_1e_4
e_6
\\&+[k_6\alpha]e_1e_3e_5
e_6+[k_7]e_2
^2e_5^2e_6+[k_8 \beta] e_1e_5
+[k_9]
e_1e_2e_4e_5e_6+
[k_{10}]e_1^3e_2e_5^2e_6^2
\\&
+
[k_{11}\alpha] e_4
e_5+[k_{12}\alpha]e_1^3e_5e_6^2+[k_{13}]
e_1^2e_3
e_4e_5e_6+[k_{14}\beta]e_1^2e_6
+[b_{11}\alpha]e_3e_5^2
\\&+[k_{15}]e_1^2
e_2e_4e_6^2+[k_{16}]e_1^2e_2
e_5^3e_6+[k_{17}\alpha]e_1^2e_3e_6^2+
[k_{18}]e_1^2e_2e_3e_5e_6^2
+[k_{19}]e_1^3e_3e_4e_6^2
\\&+[k_{20}]e_1^4e_4e_5e_6^2+
[k_{21}\alpha]e_1^2
e_5^2e_6+[k_{22}]e_1^4e_3
e_5^2e_6^2+[k_{23}]
e_1^3e_3e_5^3e_6
+[k_{24}]e_1^3
e_4e_5^2e_6
\\&
+[k_{25}]e_1e_5^3+[k_{26}]e_1^2e_4
e_5^3+[k_{27}]e_1^2e_3e_5^4+[k_{28}]e_2e_4e_5^2+[k_{29}]e_1e_3e_4
e_5^2
\\&+[k_{30}]e_1e_2
e_5^4+[k_{31}]
e_1e_2e_3e_5^2e_6.
\end{align*}
\end{lemma}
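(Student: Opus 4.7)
The plan is to derive all four identities from the two defining relations \eqref{e1e} and \eqref{e2e} of $A_{\alpha,\beta}$, together with the commutation relations, by a cascade of substitutions. Since the appendix constants are arranged so that $a' \neq 0$ and $d' \neq 0$, relation \eqref{e1e} can be solved directly for $e_3^2$ and relation \eqref{e2e} for $e_4^2$.

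For (1), simply isolate $e_3^2$ in \eqref{e1e} by dividing through by $a'$; this yields the claimed expression with $c_1 = 1/a'$, $c_2 = -a/a'$, $c_3 = -1/a'$ (up to the naming convention in the appendix). For (2), start with $d'e_4^2 = \beta - e_2e_4e_6 - be_2e_5^3 - be_3^3e_6 - b'e_3^2e_5^2 - c'e_3e_4e_5$, and eliminate the $e_3^2$ content: rewrite $e_3^3e_6 = e_3 \cdot e_3^2 \cdot e_6$ and $e_3^2 e_5^2$ via (1). The substitution produces monomials such as $e_3 e_2 e_5 e_6$, $e_3 e_1 e_4 e_6$, $e_3 e_1 e_3 e_5 e_6$, which must be brought into the standard order by using $e_3 e_1 = q^{-1}e_1 e_3 - (q+q^{-1}+q^{-3})e_2$, $e_3 e_2 = q^{-3}e_2 e_3$, and the $e_6$--commutation relations. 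The key subtlety is that $e_3 e_1 e_3 e_5 e_6$ generates a new $e_3^2$ term after the first commutation, so (1) must be applied again, and one iterates until every monomial is in the prescribed PBW form; matching coefficients then gives the constants $b_1,\dots,b_{14}$.

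For (3), multiply identity (1) on the right by $e_4$. The term $e_1 e_4 \cdot e_4 = e_1 e_4^2$ is handled by substituting (2); the terms $e_2 e_5 e_4$ and $e_1 e_3 e_5 e_4$ are reordered by the relations $e_5 e_4 = q^{-3} e_4 e_5$ and $e_4 e_3 = q^{-3} e_3 e_4$ (with the $e_4 e_1$ and $e_4 e_2$ corrections producing $e_3^2$ and $e_3^3$ pieces, which are again collapsed using (1)). After collecting like terms one reads off the $c_2 b_i$ coefficients of (3). For (4), multiply identity (2) on the left by $e_3$ and push $e_3$ to the right in each summand using the commutation relations; every time a residual $e_3^2$ appears, apply (1), and every time a residual $e_3^2 e_4$ appears, apply (3). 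A parallel iteration of substitutions produces the final expression with the constants $k_1,\dots,k_{31}$.

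The genuine obstacle is not conceptual but combinatorial bookkeeping: the successive Ore commutations $e_i e_j = \sigma_j(e_i) + \delta_j(e_i)$ each inject $\delta$-correction monomials, and the feedback from (1) into (2), (3), (4) causes a proliferation of terms whose scalar coefficients must be carefully aggregated in the base field. The computation terminates because each substitution strictly reduces the ``$e_3$-degree'' (respectively the degree in $e_3, e_4$) of the remaining non-standard terms, so after finitely many steps every monomial lies in the normal form prescribed by the PBW basis inherited from the iterated Ore extension structure of $U_q^+(G_2)$. Verification then reduces to matching scalars against the definitions in Appendix \ref{appenc}, which can be done directly from the explicit commutation relations listed above.
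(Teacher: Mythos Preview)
Your proposal is correct and takes essentially the same approach as the paper, which simply states ``This is proved by brute-force computation, left to the reader.'' You have spelled out in reasonable detail how that brute-force computation actually proceeds---solving \eqref{e1e} for $e_3^2$, solving \eqref{e2e} for $e_4^2$ while eliminating $e_3^2$ via (1), and then obtaining (3) and (4) by multiplying and re-straightening---so there is nothing to add.
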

\begin{proof}
This is proved by brute-force computation, left to the reader.
\end{proof}

\subsection{Gelfand-Kirillov dimension of $A_{\alpha,\beta}$}
\label{c3p3}

We refer the reader to \cite{gt} for background on the Gelfand-Kirillov dimension (GKdim for short).

Assume first that $\alpha,\beta \neq 0.$ Recall from Section \ref{c2s1} that $R_1=\k_{q^M}[T_1^{\pm1},\cdots,T_6^{\pm1}]$ is the quantum torus associated to the quantum affine space $\overline{A}=A^{(2)}$. Also, $\Omega_1=T_1T_3T_5$ and 
$\Omega_2=T_2T_4T_6$ in $\overline{A}.$
It follows from \cite[Theorem 5.4.1]{ca} that there exists an Ore set $S_{\alpha,\beta}$ in $A_{\alpha,\beta}$ such that
$A_{\alpha,\beta}S^{-1}_{\alpha,\beta}\cong {R_1}/{\langle T_1T_3T_5-\alpha,T_2T_4T_6-\beta\rangle}.$

Now, set $$\mathscr{A}_{\alpha,\beta}:=\frac{R_1}{\langle T_1T_3T_5-\alpha,T_2T_4T_6-\beta\rangle}.$$
Let  $t_i:=T_i+\langle T_1T_3T_5-\alpha, T_2T_4T_6-\beta\rangle$ denote the canonical images of the generators $T_i$ of  $R_1$ in $\mathscr{A}_{\alpha,\beta}.$ The algebra $\mathscr{A}_{\alpha,\beta}$ is generated by $t_1^{\pm 1}, \cdots, t_6^{\pm 1} $ subject to the following  relations:
\begin{align*}
t_it_j&=q^{\mu_{ij}}t_jt_i&t_1&=\alpha t_5^{-1}t_3^{-1}& t_2&=\beta t_6^{-1}t_4^{-1},
\end{align*}
for all $1\leq i,j\leq 6$ and $ M=(\mu_{ij})$ (the skew-symmetric matrix in Section \ref{c2s1}).
 Observe that 
$\mathscr{A}_{\alpha,\beta}\cong \k_{q^N}[t_3^{\pm1},t_4^{\pm1},t_5^{\pm1},t_6^{\pm 1}],$ where 
the skew-symmetric matrix $N$ can easily be deduced from $M$ (by deleting the first two rows and columns) as follows: 
\begin{align*}
N:=\begin{bmatrix}
0&3&1&0\\
-3&0&3&3\\
-1&-3&0&3\\
0&-3&-3&0
\end{bmatrix}.
\end{align*}
   
 Secondly, suppose that $\alpha=0$ and $\beta\neq 0.$\newline
  Then, $A_{0,\beta}S^{-1}_{0,\beta}\cong \mathscr{A}_{0,\beta}= {R_1}/{\langle T_1,T_2T_4T_6-\beta\rangle}.$ 
The algebra  $\mathscr{A}_{0,\beta}$ is generated by $t_2^{\pm 1}, \cdots, t_6^{\pm 1}$ subject to the relations
\begin{align*}
t_it_j&=q^{\mu_{ij}}t_jt_i&t_1&=0& t_2&=\beta t_6^{-1}t_4^{-1},
\end{align*}
for all $1\leq i,j\leq 6$ and $ \mu_{ij}\in M.$ We also have that 
$\mathscr{A}_{0,\beta}\cong \k_{q^N}[t_3^{\pm1},t_4^{\pm1},t_5^{\pm1},t_6^{\pm 1}].$

Finally, when $\alpha\neq 0$ and $\beta=0,$ then one can also verify that
$\mathscr{A}_{\alpha,0}\cong \k_{q^N}[t_3^{\pm1},t_4^{\pm1},t_5^{\pm1},t_6^{\pm 1}].$

 As a result of the above discussion, in all cases, we have that  $A_{\alpha,\beta}S^{-1}_{\alpha,\beta}\cong 
\mathscr{A}_{\alpha,\beta}\cong \k_{q^N}[t_3^{\pm1},t_4^{\pm1},t_5^{\pm1},t_6^{\pm 1}].$   With a slight abuse of notation, we write 
$A_{\alpha,\beta}S^{-1}_{\alpha,\beta}=
\mathscr{A}_{\alpha,\beta}= \k_{q^N}[t_3^{\pm1},t_4^{\pm1},t_5^{\pm1},t_6^{\pm 1}]$ for all 
$(\alpha,\beta)\in \k^2\setminus \{(0,0)\}.$ 
It follows from  \cite[Theorem 6.3]{gll} that $\gkdim (A_{\alpha,\beta})=\gkdim (A_{\alpha,\beta}S^{-1}_{\alpha,\beta})=\gkdim (\mathscr{A}_{\alpha,\beta})=4.$
Since Tauvel's height formula holds in $A=U_q^+(G_2)$ \cite{gll}, we have that 
$\gkdim (A)= \height (\langle \Omega_1-\alpha,\Omega_2-\beta \rangle) \ + \gkdim (A_{\alpha,\beta}).$ 
Since  $\gkdim (A)=6$, we conclude that $ \height (\langle\Omega_1-\alpha,\Omega_2-\beta\rangle) =2$ for all $(\alpha,\beta)\in \k^2\setminus \{(0,0)\}.$

\begin{proposition}
$\gkdim (A_{\alpha,\beta})=4$ for all $(\alpha, \beta) \neq (0,0)$.
\end{proposition}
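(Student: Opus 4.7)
The plan is to transfer the Gelfand–Kirillov dimension computation from the quotient $A_{\alpha,\beta}$ to a convenient localization that turns out to be a quantum torus whose rank can be read off directly. This is precisely the strategy sketched in the preceding discussion, and the argument breaks naturally into three steps: exhibit an Ore set that localizes $A_{\alpha,\beta}$ into a quotient of the quantum torus $R_1$; identify the latter as a quantum torus of rank 4; and invoke the invariance of $\gkdim$ under this localization.

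First, I would invoke \cite[Theorem 5.4.1]{ca}, which, in conjunction with the canonical embedding $\psi$ and Cauchon's deleting derivation algorithm, provides an Ore set $S_{\alpha,\beta}$ in $A_{\alpha,\beta}$ such that
\[
A_{\alpha,\beta} S_{\alpha,\beta}^{-1} \;\cong\; R_1 / \langle T_1T_3T_5 - \alpha,\; T_2T_4T_6 - \beta\rangle,
\]
where $R_1 = \k_{q^M}[T_1^{\pm 1}, \ldots, T_6^{\pm 1}]$ is the quantum torus of rank 6 associated with the quantum affine space $\overline{A}$, and $\Omega_1 = T_1T_3T_5$, $\Omega_2 = T_2T_4T_6$ in $\overline{A}$.

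Next, I would show that the right-hand side is isomorphic to a quantum torus of rank 4 by eliminating two generators. If $\alpha \neq 0$ and $\beta \neq 0$, the defining relations rewrite as $t_1 = \alpha\, t_5^{-1}t_3^{-1}$ and $t_2 = \beta\, t_6^{-1}t_4^{-1}$, so $t_1, t_2$ are expressible in terms of $t_3, t_4, t_5, t_6$. If $\alpha = 0$ and $\beta \neq 0$ (and symmetrically), then $T_1T_3T_5 = 0$ in the quotient, but $T_3, T_5$ remain invertible, which forces $t_1 = 0$, while the relation on $\Omega_2$ still eliminates $t_2$. In every case one obtains an isomorphism
\[
A_{\alpha,\beta}S_{\alpha,\beta}^{-1} \;\cong\; \k_{q^N}[t_3^{\pm 1}, t_4^{\pm 1}, t_5^{\pm 1}, t_6^{\pm 1}],
\]
with the skew-symmetric matrix $N$ obtained from $M$ by deleting the first two rows and columns.

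Finally, I would apply \cite[Theorem 6.3]{gll}, which ensures that $\gkdim$ is preserved under the Ore localization at $S_{\alpha,\beta}$, together with the well-known fact that the Gelfand–Kirillov dimension of a quantum torus equals its rank. This yields
\[
\gkdim(A_{\alpha,\beta}) \;=\; \gkdim(A_{\alpha,\beta}S_{\alpha,\beta}^{-1}) \;=\; \gkdim\bigl(\k_{q^N}[t_3^{\pm 1}, t_4^{\pm 1}, t_5^{\pm 1}, t_6^{\pm 1}]\bigr) \;=\; 4.
\]
The main subtlety lies in the case analysis when $\alpha$ or $\beta$ vanishes: one has to check that the ``degenerate'' quotient still collapses to a full quantum torus of rank 4 rather than to something smaller, which is why the invertibility of $T_3, T_4, T_5, T_6$ inside $R_1$ is essential for the argument.
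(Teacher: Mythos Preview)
Your proposal is correct and follows essentially the same route as the paper: the discussion in Section~\ref{c3p3} is exactly this argument, invoking \cite[Theorem~5.4.1]{ca} for the Ore localization, performing the same case-by-case elimination of $t_1,t_2$ to obtain $\k_{q^N}[t_3^{\pm1},t_4^{\pm1},t_5^{\pm1},t_6^{\pm1}]$, and then applying \cite[Theorem~6.3]{gll}. Your remark about the degenerate cases is well taken and matches the paper's handling of $\alpha=0$ or $\beta=0$.
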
 


\subsection{Linear basis for  $A_{\alpha,\beta}$} 
\label{c3s2} 
Set $A_{\beta}:={A}/{\langle \Omega_2-\beta\rangle},$ where $\beta\in\k.$   Now, denote the canonical images of 
$E_i$ by  $\widehat{e}_i:=E_i+\langle \Omega_2-\beta\rangle$ in $A_{\beta}.$  Clearly,  $A_{\alpha,\beta}
\cong {A_{\beta}}/{\langle \widehat{\Omega}_1-\alpha\rangle}.$ As a result, one can identify $A_{\alpha,\beta}$ with ${A_{\beta}}/{\langle \widehat{\Omega}_1-\alpha\rangle}.$ 
Moreover, the algebra $A_{\beta}$ satisfies the relations of $A=U_q^+(G_2)$ and
\begin{align}
\widehat{e_2}\widehat{e_4}\widehat{e_6}+b\widehat{e_2}\widehat{e_5}^3+
b\widehat{e_3}^3\widehat{e_6}+b'\widehat{e_3}^2\widehat{e_5}^2+c'
\widehat{e_3}\widehat{e_4}\widehat{e_5}+d'\widehat{e_4}^2=\beta.\label{c3eo}
\end{align}
From Propositions \ref{p0} and \ref{p1}, one can conclude that 
$\langle \Omega_2-\beta\rangle$ is  a completely prime ideal (since it is a prime ideal) of $A$ for all $\beta\in \k.$  Hence, the algebra $A_{\beta}$ is a noetherian domain.

We are now going to find a linear basis for $A_{\alpha,\beta},$ where $(\alpha,\beta)\in \k^2\setminus \{(0,0)\}.$  Since $A_{\alpha,\beta}$ is identified with ${A_{\beta}}/{\langle \widehat{\Omega}_1-\alpha\rangle},$ we will first and foremost find a basis for ${A_{\beta}},$ and then proceed to find a basis for $A_{\alpha,\beta}.$ Note, the relations in  Lemma \ref{c3l3} are also valid in $A_{\beta}$ and $A_{\alpha,\beta},$ and are going to be very useful in this section.

\begin{proposition}
The set $\mathfrak{S}=\{\widehat{e_1}^i\widehat{e_2}^j\widehat{e_3}^k\widehat{e_4}^{\xi}\widehat{e_5}^l\widehat{e_6}^m\mid
i,j,k,l,m\in \mathbb{N} \ \text{and} \ \xi=0,1\}$ is a $\k-$basis of $A_\beta.$  \label{c3p4}
\end{proposition}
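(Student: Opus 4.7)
The plan is to lift the statement to $A=\uq$ itself and prove the stronger assertion that $A$ is free as a $\k[\Omega_2]$-module on the basis
$$\widetilde{\mathfrak S}:=\{E_1^iE_2^jE_3^kE_4^\xi E_5^lE_6^m\mid i,j,k,l,m\in\mathbb N,\ \xi\in\{0,1\}\}.$$
Granting this, since $\Omega_2\in Z(A)$ we have $\langle\Omega_2-\beta\rangle=(\Omega_2-\beta)A$, hence
\[
A_\beta \;=\; A/(\Omega_2-\beta)A \;\cong\; \bigl(\k[\Omega_2]/(\Omega_2-\beta)\bigr)\otimes_{\k[\Omega_2]} A \;\cong\; \mathrm{span}_\k(\widetilde{\mathfrak S}),
\]
and the image of $\widetilde{\mathfrak S}$ in $A_\beta$ is exactly $\mathfrak S$, which is therefore a $\k$-basis.

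To establish the freeness, I would first equip $A$ with the \emph{height grading}, in which $\deg(E_1)=\deg(E_6)=1$, $\deg(E_5)=2$, $\deg(E_3)=3$, $\deg(E_2)=4$ and $\deg(E_4)=5$ (the heights of the corresponding positive roots of $G_2$). A direct inspection of the defining relations of Section~2.1 shows that each is homogeneous, making $A$ an $\mathbb N$-graded algebra; and the explicit formula for $\Omega_2$ recorded in Section~\ref{c2s1} shows that $\Omega_2$ is central and homogeneous of degree $10$. For spanning, starting from the identity
\[
d'E_4^2 \;=\; \Omega_2 - E_2E_4E_6 - bE_2E_5^3 - bE_3^3E_6 - b'E_3^2E_5^2 - c'E_3E_4E_5
\]
(a rearrangement of the expression of Section~\ref{c2s1}, which reduces to~\eqref{e2e} in $A_{\alpha,\beta}$), any PBW monomial $E_1^iE_2^jE_3^kE_4^\eta E_5^lE_6^m$ with $\eta\ge 2$ can be rewritten by splitting $E_4^\eta=E_4^{\eta-2}E_4^2$ and substituting. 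Since $\Omega_2$ is central, the resulting $\Omega_2$-summand is $\tfrac{1}{d'}\Omega_2$ times the PBW monomial of $E_4$-degree $\eta-2$; the remaining terms, once put back in PBW order using the defining commutations, are sums of PBW monomials of $E_4$-degree at most $\eta-1$. An induction on $\eta$ then places every PBW monomial of $A$ inside $\k[\Omega_2]\cdot\widetilde{\mathfrak S}$.

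To upgrade spanning to freeness I would compare Hilbert series in the height grading. The PBW basis gives $H_A(t)=\tfrac{1}{(1-t)^2(1-t^2)(1-t^3)(1-t^4)(1-t^5)}$, while $H_{\widetilde{\mathfrak S}}(t)=\tfrac{1+t^5}{(1-t)^2(1-t^2)(1-t^3)(1-t^4)}$ and $H_{\k[\Omega_2]}(t)=\tfrac{1}{1-t^{10}}$; using $\tfrac{1+t^5}{1-t^{10}}=\tfrac{1}{1-t^5}$, one obtains $H_{\k[\Omega_2]}(t)\cdot H_{\widetilde{\mathfrak S}}(t)=H_A(t)$. The multiplication map $\mu\colon \k[\Omega_2]\otimes_\k\mathrm{span}_\k(\widetilde{\mathfrak S})\to A$ is degree-preserving, $\k[\Omega_2]$-linear (by centrality of $\Omega_2$), and surjective on each finite-dimensional homogeneous component by the previous step; equality of Hilbert series then forces $\mu$ to be bijective in every degree, hence a $\k[\Omega_2]$-module isomorphism, which is the sought freeness. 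The bookkeeping obstacle I expect to be the most delicate part is verifying that \emph{every} PBW-rewriting triggered by the substitution preserves or decreases the $E_4$-degree; this is guaranteed by the fact that $E_4E_3=q^{-3}E_3E_4$ and $E_5E_4=q^{-3}E_4E_5$ preserve $E_4$-degree, while $E_4E_1$, $E_4E_2$ and $E_6E_4$ contribute only extra terms in $E_3^2$, $E_3^3$ or $E_5^3$ of strictly lower $E_4$-degree.
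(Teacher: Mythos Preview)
Your argument is correct and takes a genuinely different route from the paper. For the spanning part, both you and the paper run essentially the same induction on the $E_4$-exponent via the relation expressing $E_4^2$; the paper does this directly in $A_\beta$ while you do it in $A$ with $\Omega_2$-coefficients, but the mechanics coincide. The real divergence is in linear independence: the paper lifts a dependence relation back to $A$, writes it as $(\Omega_2-\beta)\nu$, expands $\nu$ in the PBW basis, and uses an ad hoc monomial order $<_4$ (prioritising the $E_4$-exponent) to isolate a leading term $d'E_1^{i'}E_2^{j'}E_3^{k'}E_4^{l'+2}E_5^{m'}E_6^{n'}$ and derive a contradiction. You instead exploit the height grading to compare Hilbert series, so that surjectivity of the graded multiplication map $\k[\Omega_2]\otimes\mathrm{span}_\k(\widetilde{\mathfrak S})\to A$ on each finite-dimensional component forces bijectivity. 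Your method is cleaner, yields the stronger statement that $A$ is free over $\k[\Omega_2]$ (valid for all $\beta$ simultaneously), and avoids the bespoke ordering; the paper's approach is more hands-on and does not require identifying a grading in which $\Omega_2$ is homogeneous.

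One small point to tighten in your final paragraph: you list only the commutation relations in which $E_4$ is one of the two letters being swapped, but you should also rule out the $E_4$-\emph{creating} swaps $E_5E_3\to q^{-1}E_3E_5-(\cdots)E_4$ and $E_6E_2\to q^3E_2E_6+(\cdots)E_4+(\cdots)E_3E_5$. These never arise in your reordering (after substitution, any $E_2,E_3$ needing to move go \emph{left} past $E_4^{\eta-2}$ and $E_3^k$, while any $E_6$ goes \emph{right} past $E_5^l$, so one never encounters an $E_5$ to the left of an $E_3$ or an $E_6$ to the left of an $E_2$), but saying so explicitly would close the loop.
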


\begin{proof}
Since the family $(\Pi_{s=1}^6E_s^{i_s})_{i_s\in\mathbb{N}}$ is a PBW-basis of $A$ over $\k,$ it follows that the family $(\Pi_{s=1}^6\widehat{e_s}^{i_s})_{i_s\in\mathbb{N}}$ is a spanning set of $A_{\beta}$ over $\k.$ We want to show that  $\mathfrak{S}$ spans $A_{\beta}.$  We do this by showing that $\Pi_{s=1}^6\widehat{e_s}^{i_s}$ can be written as a finite linear combination of 
the elements of $\mathfrak{S}$  for all $i_1,\cdots,i_6\in \mathbb{N}$ 
 by an induction on $i_4.$  The result is obvious when $i_4=0$ or $1.$ 
For $i_4\geq 1,$  assume that 
 $$\Pi_{s=1}^6\widehat{e_s}^{i_s}=
\sum_{(\xi,\underline{v})\in I}a_{(\xi,\underline{v})}\widehat{e_1}^i\widehat{e_2}^j\widehat{e_3}^k\widehat{e_4}^\xi \widehat{e_5}^l
\widehat{e_6}
^m,$$ where
$\underline{v}:=(i,j,k,l,m)\in \mathbb{N}^5$ and  $a_{(\xi,\underline{v})}$ are all scalars.  Note, $I$ is a finite subset of $\{0,1\}\times \mathbb{N}^5.$  It follows from   the commutation relations of $A_{\beta}$ (see Lemma \ref{c3l3}) that
$$\widehat{e_1}^{i_1}\widehat{e_2}^{i_2}\widehat{e_3}^{i_3}\widehat{e_4}^{i_4+1}
\widehat{e_5}^{i_5}\widehat{e_6}^{i_6}=q^{\bullet}\Pi_{s=1}^6\widehat{e_s}^{i_s}\widehat{e_4}-q^{\bullet}d_1[i_6]\widehat{e_1}^{i_1}\widehat{e_2}^{i_2}\widehat{e_3}^{i_3}
\widehat{e_4}^{i_4}\widehat{e_5}^{i_5+3}\widehat{e_6}^{i_6-1}.$$
From the inductive hypothesis, $\widehat{e_1}^{i_1}\widehat{e_2}^{i_2}\widehat{e_3}^{i_3}
\widehat{e_4}^{i_4}\widehat{e_5}^{i_5+3}\widehat{e_6}^{i_6-1}\in$ Span($\mathfrak{S}$). Hence, we proceed to show that $\Pi_{s=1}^6\widehat{e_s}^{i_s}\widehat{e_4}$ is also  in the span of $\mathfrak{S}.$ From the inductive hypothesis, we have
\begin{align*}
\Pi_{s=1}^6\widehat{e_s}^{i_s}\widehat{e_4}=\sum_{(\xi,\underline{v})\in I}a_{(\xi,\underline{v})}\widehat{e_1}^i\widehat{e_2}^j\widehat{e_3}^k\widehat{e_4}^\xi \widehat{e_5}^l
\widehat{e_6}^m
\widehat{e_4}.
\end{align*}
Using the commutation relations in Lemma \ref{c3l3}, we have that
\begin{align*}
\Pi_{s=1}^6\widehat{e_s}^{i_s}\widehat{e_4}
=&\sum_{(\xi,\underline{v})\in I}q^\bullet a_{(\xi,\underline{v})}\widehat{e_1}^i\widehat{e_2}^j\widehat{e_3}^k\widehat{e_4}^{\xi+1} \widehat{e_5}^l
\widehat{e_6} ^m
\\&+\sum_{(\xi,\underline{v})\in I} q^\bullet d_1[m]a_{(\xi,\underline{v})}\widehat{e_1}^i\widehat{e_2}^j\widehat{e_3}^k
\widehat{e_4}^{\xi}\widehat{e_5}^{l+3}
\widehat{e_6}^{m-1}.
\end{align*}
All the terms in the above expression belong to the span of $\mathfrak{S}$  except $\widehat{e_1}^i\widehat{e_2}^j\widehat{e_3}^k\widehat{e_4}^2\widehat{e_5}^l
 \widehat{e_6}^m.$  
From  \eqref{c3eo}, we have that
\begin{align}
\widehat{e_4}^2=\beta_0\widehat{e_2}\widehat{e_4}\widehat{e_6}+b\beta_0\widehat{e_2}
\widehat{e_5}^3+
b\beta_0\widehat{e_3}^3\widehat{e_6}
+b'\beta_0\widehat{e_3}^2\widehat{e_5}^2+c'\beta_0\widehat{e_3}\widehat{e_4}\widehat{e_5}
-\beta\beta_0,\label{c3e3}
\end{align}
where $\beta_0=-1/d'.$ Substituting \eqref{c3e3} into
$\widehat{e_1}^i\widehat{e_2}^j\widehat{e_3}^k\widehat{e_4}^2\widehat{e_5}^l
\widehat{e_6}^m,$ 
one can easily verify that $$\widehat{e_1}^i\widehat{e_2}^j\widehat{e_3}^k\widehat{e_4}^2\widehat{e_5}^l
\widehat{e_6}^m\in  \mathrm{Span}(\mathfrak{S}).$$ 
Therefore, $\widehat{e_1}^{i_1}\widehat{e_2}^{i_2}\widehat{e_3}^{i_3}\widehat{e_4}^{i_4+1}
\widehat{e_5}^{i_5}\widehat{e_6}^{i_6}$
 can be written as a finite linear combination of the elements of $\mathfrak{S}$ over $\k$ for all $i_1,\cdots,i_6\in \mathbb{N}.$ By the principle of mathematical induction, $\mathfrak{S}$ is a spanning set of $A_\beta$ over $\k.$\\

Next we show that $\mathfrak{S}$ is a linearly independent set. 
Suppose that
$$\sum_{(\xi,\underline{v})\in I}a_{(\xi,\underline{v})}\widehat{e_1}^i\widehat{e_2}^j\widehat{e_3}^k\widehat{e_4}^\xi \widehat{e_5}^l
\widehat{e_6}
^m=0.$$ Since $A_{\beta}=A/{\langle\Omega_2-\beta\rangle}$, it implies that
$$\sum_{(\xi,\underline{v})\in I}a_{(\xi,\underline{v})} E_1^iE_2^jE_3^kE_4^\xi E_5^l
E_6^m=(\Omega_2-\beta)\nu,$$ with 
$\nu\in A.$ Write $\nu=\displaystyle\sum_{(i,\cdots,n)\in J}b_{(i,\cdots,n)}E_1^iE_2^jE_3^kE_4^lE_5^mE_6^n,$
where $J$ is a finite subset of $\mathbb{N}^6$ and  $b_{(i,\cdots,n)}$ are all scalars. 
It follows that
\begin{align}
\label{xx}
\sum_{(\xi,\underline{v})\in I}a_{(\xi,\underline{v})} E_1^iE_2^jE_3^kE_4^\xi E_5^l
E_6^m&=\displaystyle\sum_{(i,\cdots,n)\in J}b_{(i,\cdots,n)}E_1^iE_2^jE_3^k(\Omega_2-\beta)E_4^lE_5^mE_6^n.
\end{align}
Before we continue the proof, the following needs to be noted.
\begin{itemize}
\item[$\clubsuit$ ] 
Let $(i',j',k',l',m',n'),$  $(i,j,k,l,m,n)\in$ $ \mathbb{N}^6.$ We say that $(i,j,k,l,m,n)<_4 (i',j',k',l',m',n')$ if $[l<l']$ or $[l=l'$ and $i<i']$ or $[l=l', \ i=i'$ and $j<j']$ or $[l=l', \ i=i', \ j=j'$ and $k<k']$ or $[l=l', \ i=i', \ j=j', \ k=k'$ and $m<m']$ or $[l=l', \ i=i', \ j=j', \ k=k', \ m=m'$ and $n\leq n'].$ Note, the purpose of the square bracket $[ \ ]$ is to differentiate the options.
\end{itemize}
From  Section \ref{c2s1}, we have that $\Omega_2=E_2E_4E_6+bE_2E_5^3+bE_3^3E_6
+b'E_3^2E_5^2+c'E_3E_4E_5+d'E_4^2$ in $A=U_q^+(G_2).$ 
Now,
\begin{align*}
\sum_{(\xi,\underline{v})\in I}a_{(\xi,\underline{v})} E_1^iE_2^jE_3^kE_4^\xi E_5^l
E_6^m&=\displaystyle\sum_{(i,\cdots,n)\in J}b_{(i,\cdots,n)}E_1^iE_2^jE_3^k(\Omega_2-\beta)E_4^lE_5^mE_6^n
\\&=\sum_{(i,\cdots,n)\in J}d'b_{(i,\cdots,m)}E_1^iE_2^jE_3^kE_4^{l+2}E_5^mE_6^n + \text{LT}_{<_4},
\end{align*}
where $\text{LT}_{<_4}$ contains lower order terms with respect to 
$<_4$ (as in $\clubsuit$).  Moreover, $\text{LT}_{<_4}$ vanishes when $b_{(i,\cdots,n)}=0$ for all $(i,\cdots,n)\in J$ (one can easily confirm this by fully expanding the right hand side of \eqref{xx}).

Now, suppose that there exists $(i,j,k,l,m,n)\in J$ such that $b_{(i,j,k,l,m,n)}\neq 0.$ \newline
 Let $(i',j',k',l',m',n')$ be the greatest element of $J$ with respect to 
 $<_4$ (defined in $\clubsuit$ above)  such that $b_{(i',j',k',l',m',n')}\neq 0.$  Note, the family $(E_1^iE_2^jE_3^kE_4^lE_5^mE_6^n)_{(i,\cdots,n)\in \mathbb{N}^6}$ is a basis of $A.$ Since $\text{LT}_{<_4}$ contains lower order terms,  identifying the coefficients of $E_1^{i'}E_2^{j'}E_3^{k'}E_4^{l'+2}E_5^{m'}E_6^{n'}$ in the above equality, we have that
$d'b_{(i',\cdots,n')}=0.$ Since $b_{(i',j',k',l',m',n')}\neq 0,$ it follows that $d'={q^{12}}/(q^6-1)=0,$ a contradiction (see Appendix \ref{appenc} for the expression of $d'$). As a result, $b_{(i,j,k,l,m,n)}=0$ for all 
 $(i,j,k,l,m,n)\in J.$ 
 Therefore,
 $\sum_{(\xi,\underline{v})\in I}a_{(\xi,\underline{v})} E_1^iE_2^jE_3^kE_4^\xi E_5^l
E_6^m=0.$ Since $(E_1^iE_2^jE_3^kE_4^lE_5^mE_6^n)_{(i,\cdots,n)\in\mathbb{N}^6}$ is a basis of  $A,$ it follows that $a_{(\xi,\underline{v})}=0$ for all $(\xi,\underline{v})\in I.$   In conclusion, $\mathfrak{S}$ is  a linearly independent set and hence forms a basis of $A_\beta$ as desired. 
\end{proof}

\begin{proposition}
\label{c3p5}
Let $(\alpha,\beta)\in \k^2\setminus \{(0,0)\}.$ The set  $\mathcal{B}=\{e_1^ie_2^je_3^{\epsilon_1}e_4^{\epsilon_2}e_5^ke_6^l\mid i,j,k,l\in \mathbb{N} \ \text{and} \ \epsilon_1,\epsilon_2\in \{0,1\} \}$ is a $\k-$basis of $A_{\alpha,\beta}.$ 
\end{proposition}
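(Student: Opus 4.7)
The plan is to identify $A_{\alpha,\beta}$ with $A_\beta/\langle \hat{\Omega}_1-\alpha\rangle$ (so that every element of $A_{\alpha,\beta}$ lifts to $A_\beta$, which has the basis $\mathfrak{S}$ from Proposition \ref{c3p4}), and then mimic the strategy of Proposition \ref{c3p4}. The central algebraic input is the identity
\[
e_3^2 = c_1\alpha + [c_2]\, e_2 e_5 + [c_2]\, e_1 e_4 + [c_3]\, e_1 e_3 e_5
\]
valid in $A_{\alpha,\beta}$ by Lemma \ref{c3l2}(1), which will play for the $e_3$-exponent the role that the $e_4^2$-reduction played in Proposition \ref{c3p4} for the $e_4$-exponent.

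For the spanning part, since the images in $A_{\alpha,\beta}$ of the elements of $\mathfrak{S}$ already generate $A_{\alpha,\beta}$, it suffices to show that every $\hat{e}_1^i \hat{e}_2^j \hat{e}_3^k \hat{e}_4^\xi \hat{e}_5^l \hat{e}_6^m \in \mathfrak{S}$ maps to an element of $\mathrm{span}(\mathcal{B})$. We argue by induction on $k$. The cases $k\le 1$ are immediate, since the monomial already belongs to $\mathcal{B}$. For $k\ge 2$, we factor $e_1^i e_2^j e_3^{k-2}\cdot e_3^2 \cdot e_4^\xi e_5^l e_6^m$, substitute the identity above, and straighten each resulting product via the commutation relations of $A_{\alpha,\beta}$, using Lemma \ref{c3l2}(2) to eliminate any $e_4^2$ that appears along the way. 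A bookkeeping of each straightening move shows that every resulting monomial has $e_3$-exponent strictly smaller than $k$, and the inductive hypothesis places them all in $\mathrm{span}(\mathcal{B})$.

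For linear independence, assume that a relation
\[
\sum_{\nu} a_\nu\, \hat{e}_1^{i}\hat{e}_2^{j}\hat{e}_3^{\epsilon_1}\hat{e}_4^{\epsilon_2}\hat{e}_5^{k}\hat{e}_6^{l} = (\hat{\Omega}_1-\alpha)\mu
\]
holds in $A_\beta$ for some $\mu\in A_\beta$; the goal is to deduce $\mu=0$ and hence $a_\nu=0$ for all $\nu$. Expand $\mu$ in the basis $\mathfrak{S}$ as $\mu=\sum b_{(i,j,k,\xi,l,m)}\hat{e}_1^{i}\hat{e}_2^{j}\hat{e}_3^{k}\hat{e}_4^{\xi}\hat{e}_5^{l}\hat{e}_6^{m}$, and introduce an ordering $<_3$ on $\mathbb{N}^3\times\{0,1\}\times\mathbb{N}^2$ with the $e_3$-exponent as most significant entry (in analogy with the ordering $<_4$ employed in the proof of Proposition \ref{c3p4}). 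Since the unique $e_3$-leading component of $\hat{\Omega}_1-\alpha$ is $a'\hat{e}_3^2$, we expect that for the $<_3$-maximal index $(i',j',k',\xi',l',m')$ with $b_{(\cdot)}\neq 0$, the coefficient, in the $\mathfrak{S}$-expansion of $(\hat{\Omega}_1-\alpha)\mu$, of the monomial $\hat{e}_1^{i'}\hat{e}_2^{j'}\hat{e}_3^{k'+2}\hat{e}_4^{\xi'}\hat{e}_5^{l'}\hat{e}_6^{m'}$ equals a non-zero scalar multiple of $a' b_{(i',j',k',\xi',l',m')}$; since every monomial on the left-hand side has $e_3$-exponent $\le 1$, this coefficient must vanish, forcing $b_{(i',j',k',\xi',l',m')}=0$, a contradiction.

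The main obstacle lies in executing this leading-term analysis cleanly. The commutators $\hat{e}_5\hat{e}_2 = \hat{e}_2\hat{e}_5 + (1-q^2)\hat{e}_3^2$ and $\hat{e}_4\hat{e}_2 = q^{-3}\hat{e}_2\hat{e}_4 - \frac{q^4-2q^2+1}{q^4+q^2+1}\hat{e}_3^3$, together with the reduction of $\hat{e}_4^2$ in $A_\beta$ inherited from $\hat{\Omega}_2=\beta$, all can increase the $e_3$-exponent during straightening. As a consequence, several summands of $\hat{\Omega}_1\mu$ may contribute monomials of $e_3$-exponent $k'+2$ or even $k'+3$, so the ordering $<_3$ must be chosen so that the targeted coefficient coming from $a'\hat{e}_3^2$ times the $<_3$-leading basis term of $\mu$ is not cancelled by these extraneous contributions --- a natural choice is to rank by $(k,\xi,j,i,l,m)$ in lexicographic order, which isolates the $a'\hat{e}_3^2$-contribution when $\xi=j=0$ at the maximal index and reduces the general case to this one by a secondary induction on $\xi$ and $j$.
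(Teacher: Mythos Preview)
Your spanning argument is essentially correct and close to the paper's: the paper also inducts on the $e_3$-exponent, though it organises the induction by left-multiplying by $e_3$ (so that only $e_3e_1^{i}$ needs straightening) rather than by factoring out $e_3^2$ in the middle. Either route works, but your claim ``every resulting monomial has $e_3$-exponent strictly smaller than $k$'' requires the use of Lemma~\ref{c3l2}(2),(3) to dispose of the $e_4^2$ and $e_3^2e_4$ that arise when $\xi=1$; you should say so explicitly, since Lemma~\ref{c3l2}(1) alone does not suffice.

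There is a genuine gap in your linear–independence argument. You have correctly located the difficulty: multiplying $\hat\Omega_1$ into a basis monomial of $A_\beta$ can raise the $e_3$-exponent not only through $a'\hat e_3^{\,2}$ but also, when $\xi=1$, through $a\hat e_1\hat e_4$ followed by the $\hat e_4^{\,2}$-reduction (which introduces a $b\beta_0\hat e_3^{\,3}\hat e_6$ term and hence pushes the $e_3$-exponent up by $3$, not $2$). Your proposed fix --- order by $(k,\xi,j,i,l,m)$ and then perform ``a secondary induction on $\xi$ and $j$'' --- is not worked out, and as stated it does not isolate a single leading coefficient: when the $<_3$-maximal index has $\xi'=1$, the monomial of highest $e_3$-exponent produced from it has $\xi=0$, so it lies in a different $\hat e_4$-type and can collide with contributions from lower-order terms of $\mu$ that also have $\xi=0$. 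The paper resolves this by splitting $\nu$ into two sums $J_1$ (terms containing $\hat e_4$) and $J_2$ (terms without $\hat e_4$), writing out the full $\mathfrak S$-expansion of $(\hat\Omega_1-\alpha)\nu$, and then comparing the maximal $e_3$-exponents $v_3$ in $J_1$ and $w_3$ in $J_2$: in each of the cases $w_3-v_3<2$ and $w_3-v_3\ge 2$ one finds a specific basis monomial whose coefficient is $a'\cdot b_{(\cdots)}$ or $a'\cdot c_{(\cdots)}$ alone, forcing a contradiction. That two-type case analysis is the missing idea in your proposal; once you have it, the remaining steps are exactly as you sketched.
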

\begin{proof}
Since the set  $\mathfrak{S}=\{\widehat{e_1}^{i_1}\widehat{e_2}^{i_2}\widehat{e_3}^{i_3}\widehat{e_4}^{\xi}
\widehat{e_5}^{i_5}\widehat{e_6}^{i_6}\mid i_1,i_2,i_3,i_5,i_6\in\mathbb{N} \ \text{and} \ \xi=0,1\}
$ is a $\k-$basis of $A_\beta$ (Proposition \ref{c3p4}), and $A_{\alpha,\beta}$ is identified with ${A_\beta}/{\langle \widehat{\Omega}_1-\alpha\rangle},$ it follows that  $$(e_1^{i_1}e_2^{i_2}e_3^{i_3}e_4^\xi e_5^{i_5}e_6^{i_6})_{i_1,i_2,i_3,i_5,i_6\in \mathbb{N},  \ \xi\in \{0,1\}}$$ is a spanning set of $A_{\alpha,\beta}$ over $\k.$  We want to show that $\mathcal{B}$ spans $A_{\alpha,\beta}$ by showing that $e_1^{i_1}e_2^{i_2}e_3^{i_3}e_4^\xi e_5^{i_5}e_6^{i_6}$ can be written as a finite linear combination of the elements of $\mathcal{B}$ over $\k$ for all $i_1,i_2,i_3,i_5,i_6\in\mathbb{N}$ and $\xi=0,1.$ By Proposition \ref{c3p4}, it is sufficient to do this by an induction on $i_3.$ The result is obvious when  $i_3=0$ or $1.$  For $ i_3\geq 1,$ suppose that
\begin{align*}
e_1^{i_1}e_2^{i_2}e_3^{i_3}e_4^\xi e_5^{i_5}e_6^{i_6}
&=\sum_{(\epsilon_1,\epsilon_2,\underline{\upsilon})\in I}a_{(\epsilon_1,\epsilon_2,\underline{\upsilon})}
e_1^ie_2^je_3^{\epsilon_1}e_4^{\epsilon_2}e_5^ke_6^l,
\end{align*}
where $\underline{\upsilon}=(i,j,k,l)\in \mathbb{N}^4,$ \ $I$ is a finite subset of $\{0,1\}^2\times \mathbb{N}^4$, and $(a_{(\epsilon_1,\epsilon_2,\underline{\upsilon})})_{(\epsilon_1,\epsilon_2,\underline{\upsilon})\in I}$ is a family of scalars. Using the commutation relations in $A_{\alpha,\beta}$  (Lemma \ref{c3l3}), we have that:
$$e_1^{i_1}e_2^{i_2}e_3^{i_3+1}e_4^\xi e_5^{i_5}e_6^{i_6}
=q^{\bullet}e_3e_1^{i_1}e_2^{i_2}e_3^{i_3}e_4^\xi e_5^{i_5}e_6^{i_6}-q^{\bullet}d_2[i_1]e_1^{i_1-1}e_2^{1+i_2}e_3^{i_3}
e_4^\xi e_5^{i_5}e_6^{i_6}.$$
From the inductive hypothesis, $e_1^{i_1-1}e_2^{1+i_2}e_3^{i_3}
e_4^\xi e_5^{i_5}e_6^{i_6}\in$ Span($\mathcal{B}$) for all $i_1>0$ (note: $d_2[0]=0$). As a result, we proceed to show that
$e_3e_1^{i_1}e_2^{i_2}e_3^{i_3}e_4^\xi e_5^{i_5}e_6^{i_6}$ is also in the span of $\mathcal{B}.$ It follows from the inductive hypothesis that

\begin{align*}
e_3e_1^{i_1}e_2^{i_2}e_3^{i_3}e_4^\xi e_5^{i_5}e_6^{i_6}=&
\sum_{(\epsilon_1,\epsilon_2,\underline{\upsilon})\in I}a_{(\epsilon_1,\epsilon_2,\underline{\upsilon})}
e_3e_1^ie_2^je_3^{\epsilon_1}e_4^{\epsilon_2}e_5^ie_6^l\\
=&\sum_{(\epsilon_1,\epsilon_2,\underline{\upsilon})\in I}a_{(\epsilon_1,\epsilon_2,\underline{\upsilon})}
e_1^ie_2^je_3^{\epsilon_1+1}e_4^{\epsilon_2}e_5^ke_6^l\\
&+\sum_{(\epsilon_1,\epsilon_2,\underline{\upsilon})\in I}d_2[i]a_{(\epsilon_1,\epsilon_2,\underline{\upsilon})}
e_1^{i-1}e_2^{1+j}e_3^{\epsilon_1}e_4^{\epsilon_2}e_5^ke_6^l.
\end{align*}
Clearly, the monomial $e_1^{i-1}e_2^{1+j}e_3^{\epsilon_1}e_4^{\epsilon_2}e_5^ke_6^l$  belongs to the span of $\mathcal{B}$ for all $(\epsilon_1,\epsilon_2,\underline{\upsilon})\in I$  (with $i>0$). Again, the monomial
$e_1^ie_2^je_3^{\epsilon_1+1}e_4^{\epsilon_2}e_5^ke_6^l$ belongs to the span of 
$\mathcal{B}$ for all $(\epsilon_1,\epsilon_2,\underline{\upsilon})\in I;$ 
with $(\epsilon_1,\epsilon_2)=(0,0), (0,1).$ For $(\epsilon_1, \epsilon_2)
=(1,0), (1,1);$ we must show that $e_1^ie_2^je_3^2e_5^ke_6^l$ and 
$e_1^ie_2^je_3^{2}e_4e_5^ke_6^l$  belong to the span of $\mathcal{B}.$ 
From Lemma \ref{c3l2}, one can write $e_1^ie_2^je_3^2e_5^ke_6^l$ and $
e_1^ie_2^je_3^2e_4e_5^ke_6^l$ as finite linear combinations of the elements of $\mathcal{B}$ over $\k.$ Hence,  $e_1^ie_2^je_3^2e_5^ke_6^l$ and $
e_1^ie_2^je_3^2e_4e_5^ke_6^l$ belong to the span of $\mathcal{B}$ for all $(\epsilon_1,\epsilon_2,\underline{\upsilon})\in I;$ with  $(\epsilon_1,\epsilon_2)
=(1,0), (1,1).$ We have therefore established that
$e_3e_1^{i_1}e_2^{i_2}e_3^{i_3}e_4^\xi e_5^{i_5}e_6^{i_6}\in$ Span($\mathcal{B}$).
Consequently, each $e_1^{i_1}e_2^{i_2}e_3^{i_3+1}e_4^\xi e_5^{i_5}e_6^{i_6}$ belongs to the span of $\mathcal{B}.$ By the principle of mathematical induction, $\mathcal{B}$ is a spanning set of $A_{\alpha,\beta}$ over $\k$ as expected.\\

Finally, we show that $\mathcal{B}$ is a linearly independent set. 
Suppose that
\begin{align*}
\sum_{(\epsilon_1,\epsilon_2,\underline{\upsilon})\in I}a_{(\epsilon_1,\epsilon_2,\underline{\upsilon})}
e_1^ie_2^je_3^{\epsilon_1}e_4^{\epsilon_2}e_5^ke_6^l=0.
\end{align*}
In $A_\beta,$ we have that 
\begin{align*}
\sum_{(\epsilon_1,\epsilon_2,\underline{\upsilon})\in I}a_{(\epsilon_1,\epsilon_2,\underline{\upsilon})}\widehat{e_1}^i\widehat{e_2}^j\widehat{e_3}^{\epsilon_1}\widehat{e_4}^{\epsilon_2}
\widehat{e_5}^k\widehat{e_6}^l
&= (\widehat{\Omega}_1-\alpha)\nu,
\end{align*}
with $\nu\in A_{\beta}.$ One can write
$\nu$ in terms of the basis $\mathfrak{S}$ of $A_{\beta}$ (Proposition \ref{c3p4}) as:
$$\nu=\displaystyle\sum_{\underline{w}\in J_1}b_{\underline{w}}\widehat{e_1}^i\widehat{e_2}^j\widehat{e_3}^k
\widehat{e_4}\widehat{e_5}^m
 \widehat{e_6}^n+\displaystyle\sum_{\underline{w}\in J_2}c_{\underline{w}}\widehat{e_1}^i\widehat{e_2}^j\widehat{e_3}^k
\widehat{e_5}^l
 \widehat{e_6}^m,$$
 where  $b_{\underline{w}}$ and $c_{\underline{w}}$ are all scalars and $w:=(i,j,k,l,m)$.
 Hence,
 \begin{align*}
 \sum_{(\epsilon_1,\epsilon_2,\underline{\upsilon})\in I}a_{(\epsilon_1,\epsilon_2,\underline{\upsilon})}
\widehat{e_1}^i\widehat{e_2}^j\widehat{e_3}^{\epsilon_1}\widehat{e_4}^{\epsilon_2}
\widehat{e_5}^k\widehat{e_6}^l=&\sum_{\underline{w}\in J_1}b_{\underline{w}}\widehat{e_1}^i\widehat{e_2}^j\widehat{e_3}^k(\widehat{\Omega}_1-\alpha)
\widehat{e_4}\widehat{e_5}^l
 \widehat{e_6}^m\\&+\displaystyle\sum_{\underline{w}\in J_2}c_{\underline{w}}\widehat{e_1}^i\widehat{e_2}^j\widehat{e_3}^k
(\widehat{\Omega}_1-\alpha)\widehat{e_5}^l
 \widehat{e_6}^m.
 \end{align*}
Note, $\widehat{\Omega}_1=\widehat{e_1}\widehat{e_3}\widehat{e_5}+a\widehat{e_1}\widehat{e_4}+
 a\widehat{e_2}\widehat{e_5}+a'\widehat{e_3}^2.$
Using  \eqref{c3e3} and the relation $e_3^ke_1=q^{-k}e_1e_3^k+d_2[k]e_2e_3^{k-1}$ (see Lemma \ref{c3l3}), one can verify that  the above equality can be written in terms of the basis of $A_{\beta}$ (Propositions \ref{c3p4}) as:
\begin{align}
\label{aa}
\sum_{(\epsilon_1,\epsilon_2,\underline{\upsilon})\in I}a_{(\epsilon_1,\epsilon_2,\underline{\upsilon})}
\widehat{e_1}^i\widehat{e_2}^j\widehat{e_3}^{\epsilon_1}\widehat{e_4}^{\epsilon_2}
\widehat{e_5}^k\widehat{e_6}^l
 =&\sum_{\underline{w}\in J_1} b_{\underline{w}} a'\widehat{e_1}^i\widehat{e_2}^j\widehat{e_3}^{k+2}\widehat{e_4}
 \widehat{e_5}^l\widehat{e_6}^m\nonumber\\
 &+\sum_{\underline{w}\in J_1}q^\bullet b_{\underline{w}} ab\beta_0\widehat{e_1}^{i+1}\widehat{e_2}^j\widehat{e_3}^{k+3}
 \widehat{e_5}^l\widehat{e_6}^{m+1}\nonumber\\
 &+\sum_{\underline{w}\in J_2}q^{\bullet} c_{\underline{w}}a\widehat{e_1}^{i+1}\widehat{e_2}^{j}\widehat{e_3}^k\widehat{e_4}
\widehat{e_5}^{l}\widehat{e_6}^m\nonumber\\
&+\sum_{\underline{w}\in J_2}c_{\underline{w}} a'\widehat{e_1}^i\widehat{e_2}^{j}\widehat{e_3}^{k+2} \widehat{e_5}^l\widehat{e_6}^m+\Upsilon,
  \end{align}
where $\Upsilon$ is defined as follows:
  \begin{align*}
 \Upsilon =&\sum_{\underline{w}\in J_1}q^{\bullet} b_{\underline{w}} \widehat{e_1}^{i+1}\widehat{e_2}^j\widehat{e_3}^{k+1}\widehat{e_4}
 \widehat{e_5}^{l+1}\widehat{e_6}^m + 
 \sum_{\underline{w}\in J_1}q^{\bullet} b_{\underline{w}}d_2[k] \widehat{e_1}^{i}\widehat{e_2}^{1+j}\widehat{e_3}^{k}\widehat{e_4}
 \widehat{e_5}^{l+1}\widehat{e_6}^m\\
 &-\sum_{\underline{w}\in J_1}q^{\bullet} b_{\underline{w}}a\beta\beta_0 \widehat{e_1}^{i+1}\widehat{e_2}^{j}\widehat{e_3}^{k}
 \widehat{e_5}^{l}\widehat{e_6}^m+
 \sum_{\underline{w}\in J_1}q^{\bullet} b_{\underline{w}}a\beta_0 \widehat{e_1}^{i+1}\widehat{e_2}^{1+j}\widehat{e_3}^{k}\widehat{e_4}
 \widehat{e_5}^{l}\widehat{e_6}^{m+1}\\
&+ \sum_{\underline{w}\in J_1}q^{\bullet} b_{\underline{w}}ab\beta_0 \widehat{e_1}^{i+1}\widehat{e_2}^{j+1}\widehat{e_3}^{k}
 \widehat{e_5}^{l+3}\widehat{e_6}^m+
 \sum_{\underline{w}\in J_1}q^{\bullet} b_{\underline{w}}ab'\beta_0 \widehat{e_1}^{i+1}\widehat{e_2}^{j}\widehat{e_3}^{k+2}
 \widehat{e_5}^{l+2}\widehat{e_6}^m\\
 &+\sum_{\underline{w}\in J_1}q^{\bullet} b_{\underline{w}}ac'\beta_0 \widehat{e_1}^{i+1}\widehat{e_2}^{j}\widehat{e_3}^{k+1}
 \widehat{e_4}\widehat{e_5}^{l+1}\widehat{e_6}^m
 -\sum_{\underline{w}\in J_1}q^{\bullet} b_{\underline{w}}a\beta\beta_0 d_2[k] \widehat{e_1}^{i}\widehat{e_2}^{1+j}\widehat{e_3}^{k-1}
 \widehat{e_5}^{l}\widehat{e_6}^m\\&+
 \sum_{\underline{w}\in J_1}q^{\bullet} b_{\underline{w}}a\beta_0d_2[k] \widehat{e_1}^{i}\widehat{e_2}^{j+2}\widehat{e_3}^{k-1}
\widehat{e_4} \widehat{e_5}^{l}\widehat{e_6}^{m+1}+
\sum_{\underline{w}\in J_1}q^{\bullet} b_{\underline{w}}ab\beta_0d_2[k] \widehat{e_1}^{i}\widehat{e_2}^{j+2}\widehat{e_3}^{k-1}
 \widehat{e_5}^{l+3}\widehat{e_6}^m\\
& +\sum_{\underline{w}\in J_1}q^{\bullet} b_{\underline{w}}ab\beta_0d_2[k] \widehat{e_1}^{i+1}\widehat{e_2}^{j}\widehat{e_3}^{k+2}
 \widehat{e_5}^{l}\widehat{e_6}^{m+1}
+\sum_{\underline{w}\in J_1}q^{\bullet} b_{\underline{w}}ab'\beta_0d_2[k] \widehat{e_1}^{i}\widehat{e_2}^{j+1}\widehat{e_3}^{k+1}
 \widehat{e_5}^{l+2}\widehat{e_6}^m\\
&+\sum_{\underline{w}\in J_1}q^{\bullet} b_{\underline{w}}ac'\beta_0d_2[k] \widehat{e_1}^{i}\widehat{e_2}^{j+1}\widehat{e_3}^{k}
 \widehat{e_4}\widehat{e_5}^{l+1}\widehat{e_6}^m
 +\sum_{\underline{w}\in J_1}q^{\bullet} b_{\underline{w}}a \widehat{e_1}^{i}\widehat{e_2}^{j+1}\widehat{e_3}^{k}
 \widehat{e_4}\widehat{e_5}^{l+1}\widehat{e_6}^m\\
 &-\sum_{\underline{w}\in J_1}b_{\underline{w}}\beta \widehat{e_1}^{i}\widehat{e_2}^{j}\widehat{e_3}^{k}\widehat{e_4}
\widehat{e_5}^{l}\widehat{e_6}^m+\sum_{\underline{w}\in J_2}q^{\bullet} c_{\underline{w}}\widehat{e_1}^{i+1}\widehat{e_2}^{j}\widehat{e_3}^{k+1}
\widehat{e_5}^{l+1}\widehat{e_6}^m\\
&
+\sum_{\underline{w}\in J_2}q^{\bullet} c_{\underline{w}}d_2[k]\widehat{e_1}^{i}\widehat{e_2}^{1+j}\widehat{e_3}^{k}
\widehat{e_5}^{l+1}\widehat{e_6}^m+\sum_{\underline{w}\in J_2}q^{\bullet}c_{\underline{w}}ad_2[k]\widehat{e_1}^{i}\widehat{e_2}^{j+1}\widehat{e_3}^{k-1}\widehat{e_4}
\widehat{e_5}^{l}\widehat{e_6}^m\\
&
+\sum_{\underline{w}\in J_2}q^{\bullet}c_{\underline{w}}a\widehat{e_1}^{i}\widehat{e_2}^{j+1}\widehat{e_3}^{k}
\widehat{e_5}^{l+1}\widehat{e_6}^m-\sum_{\underline{w}\in J_2}c_{\underline{w}}\alpha\widehat{e_1}^{i}\widehat{e_2}^{j}\widehat{e_3}^{k}
\widehat{e_5}^{l}\widehat{e_6}^m.
  \end{align*}
Before we continue the proof, the following point needs to be noted.

\begin{itemize}
\item[$\spadesuit$] Let $(\vartheta_1,\vartheta_2,\vartheta_3,\vartheta_5,\vartheta_6)$, $(\varsigma_1,\varsigma_2,\varsigma_3,\varsigma_5,\varsigma_6)\in\mathbb{N}^5.$ We say that $(\varsigma_1,\varsigma_2,\varsigma_3,\varsigma_5,\varsigma_6)<_3 (\vartheta_1,\vartheta_2,\vartheta_3,\vartheta_5,\vartheta_6)$ if $[\vartheta_3>\varsigma_3]$ or 
$[\vartheta_3=\varsigma_3$ and $\vartheta_1>\varsigma_1]$ or $[\vartheta_3=\varsigma_3, \ \vartheta_1=\varsigma_1$ and $\vartheta_2>\varsigma_2]$ 
 or $[\vartheta_3=\varsigma_3,\  \vartheta_1=\varsigma_1, \ \vartheta_2=\varsigma_2$ and $\vartheta_5>\varsigma_5]$ or $[\vartheta_3=\varsigma_3,\ \vartheta_1=\varsigma_1, \ \vartheta_2=\varsigma_2,\  \vartheta_5=\varsigma_5$ and $\vartheta_6\geq \varsigma_6].$
\end{itemize} 
Observe that $\Upsilon$ contains lower order terms with respect to  
$<_3$ (defined in $\spadesuit$ above) in each monomial type (note, there are two different types of monomials in the basis of $A_{\beta}$: one with $\widehat{e_4}$ and the other without $\widehat{e_4}$). Now, suppose that there exists $(i,j,k,l,m)\in J_1$ and $(i,j,k,l,m)\in J_2$ such that $b_{(i,j,k,l,m)}\neq 0$ and $c_{(i,j,k,l,m)}\neq 0.$ Let $(v_1,v_2,v_3,v_5,v_6)$ and $(w_1,w_2,w_3,w_5,w_6)$ be the greatest elements of  $J_1$ and $J_2$ respectively with respect to $<_{3}$  such that $b_{(v_1,v_2,v_3, v_5,v_6)}$ and $ c_{(w_1,w_2,w_3,w_5,w_6)}$  are non-zero. Since  $\mathfrak{S}$ is a linear basis for $A_\beta,$ and  $\Upsilon$ contains lower order terms with respect to $<_3$, we have the following:
 if $w_3-v_3<2,$ then identifying the coefficients  of $\widehat{e_1}^{v_1}\widehat{e_2}^{v_2}\widehat{e_3}^{v_3+2}\widehat{e_4}
 \widehat{e_5}^{v_5}\widehat{e_6}^{v_6}$ in \eqref{aa},  we have that $a'b_{(v_1,v_2,v_3, v_5,v_6)}=0$.  But 
 $b_{(v_1,v_2,v_3, v_5,v_6)}\neq 0,$ hence $ a'=q^6/(q^2-1)=0,$ a contradiction (see Appendix \ref{appenc} for the expression of $a'$). 
Finally, if $w_3-v_3\geq 2,$ then identifying the coefficient  of $\widehat{e_1}^{w_1}\widehat{e_2}^{w_2}\widehat{e_3}^{w_3+2}\widehat{e_5}^{w_5}\widehat{e
_6}^{w_6},$ we have that $a'c_{(w_1,w_2,w_3,w_5,w_6)}=0$.  But  $c_{(w_1,w_2,w_3,w_5,w_6)}\neq 0,$
  hence $a'=0,$ a contradiction! This implies that either  all $b_{(i,j,k,l,m)}$ or  all $ c_{(i,j,k,l,m)}$ are zero. 
  Without loss of generality, suppose that there exists $(i,j,k,l,m)\in J_2$ such that  $c_{(i,j,k,l,m)}$ is not zero. Then, $ b_{(i,j,k,l,m)}$ are all zero. Let $(w_1,w_2,w_3,w_5,w_6)$ be the greatest element of $J_2$ such that $ c_{(w_1,w_2,w_3,w_5,w_6)}\neq 0.$  Identifying the coefficients of 
  $\widehat{e_1}^{w_1}\widehat{e_2}^{w_2}
 \widehat{e_3}^{w_3+2} \widehat{e_5}^{w_5}\widehat{e_6}^{w_6}$ in the above equality, we have that $a'c_{(w_1,w_2,w_3,w_5,w_6)}=0.$ Since $c_{(w_1,w_2,w_3,w_5,w_6)}\neq 0,$ it follows that 
 $a'=0,$ a contradiction! We can therefore conclude that  $b_{(i,j,k,l,m)}$ and $ c_{(i,j,k,l,m)}$ are all zero.  Consequently,
$
\sum_{(\epsilon_1,\epsilon_2,\underline{\upsilon})\in I}a_{(\epsilon_1,\epsilon_2,\underline{\upsilon})}
\widehat{e_1}^i\widehat{e_2}^j\widehat{e_3}^{\epsilon_1}\widehat{e_4}^{\epsilon_2}
\widehat{e_5}^k\widehat{e_6}^l
= 0.
$
Since $(\widehat{e_1}^{i_1}\widehat{e_2}^{i_2}\widehat{e_3}^{i_3}\widehat{e_4}^{\xi}
\widehat{e_5}^{i_5}\widehat{e_6}^{i_6})_{(\xi,i_1,\cdots,i_6)\in \{0,1\}\times \mathbb{N}^5}$ is a basis of $A_{\beta}$, it implies that
$a_{(\epsilon_1,\epsilon_2,\underline{v})}=0$ for all  $(\epsilon_1,\epsilon_2,\underline{v})\in I.$ Therefore, $\mathcal{B}$ is a linearly independent set.
\end{proof}

We note for future use the following immediate consequence of Proposition \ref{c3p5}.

\begin{corollary}
\label{c4c2}
Let $\underline{v}=(i,j,k,l)\in \mathbb{N}^2\times \mathbb{Z}^2, \ I$ represent a finite subset of $\{0,1\}\times \mathbb{N}^2\times \mathbb{Z}^2$  and 
$(a_{(\epsilon_1,\epsilon_2,\underline{v})})_{(\epsilon_1,\epsilon_2, \underline{v})\in I}$ be a family of scalars.  If 
$$\sum_{(\epsilon_1,\epsilon_2, \underline{v})\in I}a_{(\epsilon_1,\epsilon_2, \underline{v})}e_{1}^{i}e_{2}^{j}e_3^{\epsilon_1}
e_4^{\epsilon_2}t_5^{k}t_6^{l}=0,$$ then $a_{(\epsilon_1,\epsilon_2, \underline{v})}=0$ for all $(\epsilon_1,\epsilon_2, \underline{v})\in I.$ 
\end{corollary}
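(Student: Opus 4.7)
The approach is to reduce the statement directly to Proposition \ref{c3p5} by clearing negative powers of $t_5$ and $t_6$. Under the isomorphism $A_{\alpha,\beta}S_{\alpha,\beta}^{-1}\cong \mathscr{A}_{\alpha,\beta}\cong \k_{q^N}[t_3^{\pm 1},t_4^{\pm 1},t_5^{\pm 1},t_6^{\pm 1}]$ recalled in Section \ref{c3p3}, the generators $t_5$ and $t_6$ of the quantum torus coincide with $e_5$ and $e_6$ respectively (since $T_5=E_5$ and $T_6=E_6$ throughout the DDA). In particular, inside the localisation $t_5$ and $t_6$ are invertible and satisfy $t_6t_5=q^{-3}t_5t_6$, which by an easy induction yields $t_6^l t_5^N=q^{-3lN}t_5^N t_6^l$ for every $l\in \Z$ and every $N\in \N$.

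Starting from a finite relation $\sum_{(\epsilon_1,\epsilon_2,\underline{v})\in I}a_{(\epsilon_1,\epsilon_2,\underline{v})}e_1^i e_2^j e_3^{\epsilon_1}e_4^{\epsilon_2}t_5^k t_6^l=0$, the plan is to pick $N\in \N$ large enough that $k+N\geq 0$ and $l+N\geq 0$ for every index $(\epsilon_1,\epsilon_2,(i,j,k,l))\in I$, and to right-multiply the relation by $t_5^N t_6^N$. Because $t_5$ and $t_6$ sit at the right end of each monomial and only interact amongst themselves, this operation transforms the given equality into
\[
\sum_{(\epsilon_1,\epsilon_2,\underline{v})\in I}q^{-3lN}a_{(\epsilon_1,\epsilon_2,\underline{v})}e_1^i e_2^j e_3^{\epsilon_1}e_4^{\epsilon_2}e_5^{k+N} e_6^{l+N}=0,
\]
which now lives in $A_{\alpha,\beta}$ itself.

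Every monomial appearing on the left is an element of the basis $\mathcal{B}$ from Proposition \ref{c3p5}, so linear independence of $\mathcal{B}$ forces $q^{-3lN}a_{(\epsilon_1,\epsilon_2,\underline{v})}=0$ for each index; as $q$ is non-zero, this gives $a_{(\epsilon_1,\epsilon_2,\underline{v})}=0$ for all $(\epsilon_1,\epsilon_2,\underline{v})\in I$, as required.

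The only points needing care are the identification of $t_5, t_6$ with $e_5, e_6$ inside $A_{\alpha,\beta}S_{\alpha,\beta}^{-1}$ and the $q$-power bookkeeping that arises from sliding $t_5^N$ past $t_6^l$; beyond that the argument is entirely routine, so I do not anticipate any substantive obstacle.
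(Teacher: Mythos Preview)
Your argument is correct and is precisely the natural way to make explicit what the paper leaves implicit: the paper records Corollary~\ref{c4c2} as an ``immediate consequence of Proposition~\ref{c3p5}'' without further comment, and your method of right-multiplying by $t_5^N t_6^N$ to clear negative exponents and then invoking the linear independence of $\mathcal{B}$ is exactly that immediate consequence spelled out. The only cosmetic remark is that you need not pass through the full quantum torus $\mathscr{A}_{\alpha,\beta}$: the identification $t_5=e_5$, $t_6=e_6$ and the relation $e_6e_5=q^{-3}e_5e_6$ already hold in $A_{\alpha,\beta}$, and the computation takes place in $\cR_5=A_{\alpha,\beta}[t_5^{-1},t_6^{-1}]$ (see Section~\ref{c4ss1}), which embeds $A_{\alpha,\beta}$ as a subalgebra.
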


\begin{remark}
Given the basis of $A_{\alpha,\beta},$ we have computed the group of units of $A_{\alpha,\beta},$ however, we do not include the details in this manuscript due to the voluminous computations involved. We only summarise our findings below. 
Set 
$$h_1:=e_3e_5+ae_4 \ \ \text{and} \ \ h_2:=(q^{-3}-q^{-9})e_2e_4-(q^4-2q^2+1)/(q^4+q^2+1)e_3^3.$$
\begin{theorem}
Let $(\alpha,\beta)\in \k^2\setminus \{(0,0)\}$ and 
$\mathcal{U}(A_{\alpha,\beta})$ denote the group of units of $A_{\alpha,\beta}.$ We have that:
$$\mathcal{U}(A_{\alpha,\beta})=\begin{cases}
    \{\lambda h_1^i\mid \lambda\in\k^*, \ i\in\mathbb{Z}\}& \text{if $\alpha=0$}\\
  \{\lambda h_2^i\mid \lambda\in\k^*, \ i\in\mathbb{Z}\}& \text{if $\beta=0$}\\
  \k^*& \text{$otherwise$.}
  \end{cases}
$$ 
\end{theorem}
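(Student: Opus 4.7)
The plan is to exploit the embedding $A_{\alpha,\beta} \hookrightarrow \mathscr{A}_{\alpha,\beta}$ into the simple quantum torus constructed in Section~\ref{c3p3}. First I would verify that $\mathscr{A}_{\alpha,\beta} \cong \k_{q^N}[t_3^{\pm 1}, t_4^{\pm 1}, t_5^{\pm 1}, t_6^{\pm 1}]$ is simple by a kernel computation for the matrix $N$. A monomial $t_3^{a_3} t_4^{a_4} t_5^{a_5} t_6^{a_6}$ is central if and only if $N(a_3,a_4,a_5,a_6)^\top = 0$; the first and fourth rows give $3a_4 + a_5 = 0$ and $-3a_4 - 3a_5 = 0$, forcing $a_4 = a_5 = 0$, while the second and third then yield $a_3 = a_6$ and $a_3 = 3a_6$, forcing $a_3 = a_6 = 0$. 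Hence $Z(\mathscr{A}_{\alpha,\beta}) = \k$, the quantum torus is simple, and its units are precisely the elements $\mu\, t_3^{a_3} t_4^{a_4} t_5^{a_5} t_6^{a_6}$ with $\mu \in \k^*$ and $(a_3,a_4,a_5,a_6) \in \mathbb{Z}^4$.

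Since $A_{\alpha,\beta}$ is a domain, the canonical map into the quantum torus is injective, so every unit $u \in \mathcal{U}(A_{\alpha,\beta})$ has the form $\mu\, t_3^{a_3} t_4^{a_4} t_5^{a_5} t_6^{a_6}$. The problem reduces to identifying, for each regime of $(\alpha,\beta)$, which of these monomials (and their inverses) genuinely lie in $A_{\alpha,\beta}$. Unwinding the DDA one obtains the Laurent-polynomial expressions $e_5 = t_5$, $e_6 = t_6$, $e_4 = t_4 - b t_5^3 t_6^{-1}$, $e_3 = t_3 - a t_4 t_5^{-1} - s t_5^2 t_6^{-1}$, together with longer analogous formulas for $e_2$ and $e_1$ (combined with the relations $t_1 t_3 t_5 = \alpha$ and $t_2 t_4 t_6 = \beta$ in $\mathscr{A}_{\alpha,\beta}$). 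A direct computation using the identities of Appendix~\ref{appenc} then shows that $h_1 = e_3 e_5 + a e_4$ and $h_2 = (q^{-3} - q^{-9}) e_2 e_4 - \tfrac{q^4 - 2q^2 + 1}{q^4 + q^2 + 1} e_3^3$ each coincide in $\mathscr{A}_{\alpha,\beta}$ with a single monomial in the $t_j$—for $h_1$ the would-be correction $-(q^3 s + ab) t_5^3 t_6^{-1}$ cancels exactly, so $h_1 = t_3 t_5$, and the same type of cancellation happens for $h_2$. In particular they are already units in the quantum torus.

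Three cases then arise. When $\alpha\beta \neq 0$, the relations $t_1 = \alpha t_3^{-1} t_5^{-1}$ and $t_2 = \beta t_4^{-1} t_6^{-1}$ sit inside $A_{\alpha,\beta}$, and a leading-term analysis in the basis $\mathcal{B}$ of Proposition~\ref{c3p5}—reducing powers of $e_3$ and $e_4$ via the identities of Lemma~\ref{c3l2}—forces the admissible exponent lattice in $\mathbb{Z}^4$ to collapse to the origin, so $\mathcal{U}(A_{\alpha,\beta}) = \k^*$. When $\alpha = 0$ (respectively $\beta = 0$) exactly the powers of $h_1$ (resp.\ $h_2$) survive in the admissible lattice: the inclusion $\{\lambda h_1^i\} \subseteq \mathcal{U}(A_{0,\beta})$ is checked by constructing $h_1^{-1}$ explicitly inside $A_{0,\beta}$ using the relation $\Omega_1 = 0$, and the reverse inclusion follows from the same basis-expansion argument applied to an arbitrary unit.

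The main obstacle—and presumably the reason the authors defer the details to~\cite{io-thesis}—is the voluminous bookkeeping required both to verify the DDA cancellations giving $h_1 = t_3 t_5$ and the analogous identity for $h_2$, and then to pin down, within the basis $\mathcal{B}$, the admissible exponents in each regime. These reductions rely repeatedly on the quadratic and cubic expansion identities of Lemma~\ref{c3l2}, and it is precisely the vanishing of $\Omega_1$ or $\Omega_2$ that makes the crucial simplifications go through in the two degenerate cases.
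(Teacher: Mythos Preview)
The paper itself does not prove this theorem: it is stated inside a remark with the explicit disclaimer that ``we do not include the details in this manuscript due to the voluminous computations involved'', the reader being referred to \cite{io-thesis}. So there is no in-paper proof to compare against; one can only judge whether your outline is consistent with the paper's toolkit and whether it would succeed.

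Your approach---embed $A_{\alpha,\beta}$ in the quantum torus $\mathscr{A}_{\alpha,\beta}$, use that units of a quantum torus over a field are exactly the nonzero scalar multiples of monomials, and then determine which monomials together with their inverses actually lie in $A_{\alpha,\beta}$---is sound and is precisely in the spirit of the paper's methods (Sections~\ref{c3p3} and~\ref{c4ss1}). Your key computation $h_1=t_3t_5$ is correct: using $e_3=t_3-at_4t_5^{-1}-st_5^2t_6^{-1}$ and $e_4=t_4-bt_5^3t_6^{-1}$ one gets $h_1=t_3t_5-(q^3s+ab)t_5^3t_6^{-1}$, and the identity $q^{-3}ab+s=0$ recorded in the proof of Lemma~\ref{ev19} gives the cancellation. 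A parallel (longer) calculation shows $h_2$ is proportional to $e_{2,6}t_4+be_{3,6}^3$ and hence to a monomial in the $t_i$ as well; you assert this without carrying it out, which is fair given the length.

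Where your sketch is thinnest is exactly where the paper warns the work lies: the basis-$\mathcal{B}$ argument that pins down, in each regime of $(\alpha,\beta)$, which monomials $t_3^{a_3}t_4^{a_4}t_5^{a_5}t_6^{a_6}$ \emph{and their inverses} belong to $A_{\alpha,\beta}$. Your phrases ``leading-term analysis forces the lattice to collapse'' and ``constructing $h_1^{-1}$ explicitly using $\Omega_1=0$'' are placeholders for substantial computations using Lemma~\ref{c3l2} and Proposition~\ref{c3p5}. None of this is wrong, but it is where all the content is, and you have not supplied it---which is, in fairness, exactly the paper's position too.
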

\end{remark}


\subsection{$A_{\alpha,\beta}$ as a $q$-deformation of  a quadratic extension of $A_2(\k)$}
\label{rec}
Recall that $\gkdim A_{\alpha,\beta} =4$ and so we should compare $A_{\alpha,\beta}$ to the second Weyl algebra. In this section, we prove that, for a suitable choice of $\alpha$ and $\beta$, the simple algebra $A_{\alpha,\beta}$ is a $q$-deformation of (a quadratic extension of) $A_2(\k).$ 

Recall that 
$A_2(\k)$ is generated by $x_1,x_2,y_1$ and $y_2$ subject to the relations: 
\begin{align*}
 y_1y_2&=y_2y_1 & x_2y_1&=y_1x_2 &
x_1x_2&=x_2x_1 &  x_1y_1&-y_1x_1=1\\
y_1y_2&=y_2y_1 & x_1y_2&=y_2x_1&
x_2y_1&=y_1x_2 &   x_
2y_2&-y_2x_2=1.
\end{align*}

Given the relations of $A_{\alpha,\beta}$ at the onset of this section, we have that $A_{1,\frac{1}{9(q^6-1)}}$ satisfies the following relations:
 \begin{align*}
e_2e_1&=q^{-3}e_1e_2& e_3e_1&=q^{-1}e_1e_3-(q+q^{-1}+q^{-3})e_2\\
e_3e_2&=q^{-3}e_2e_3&e_4e_1&=e_1e_4+(1-q^2)e_3^2\\
e_4e_2&=q^{-3}e_2e_4-\frac{q^4-2q^2+1}{q^4+q^2+1}e_3^3&e_4e_3&=q^{-3}e_3e_4\\
e_5e_1&=qe_1e_5-(1+q^2)e_3&e_5e_2&=e_2e_5+(1-q^2)e_3^2\\
e_5e_3&=q^{-1}e_3e_5-(q+q^{-1}+q^{-3})e_4&e_5e_4&=q^{-3}e_4e_5\\
e_6e_1&=q^3e_1e_6-q^3e_5&e_6e_2&=q^3e_2e_6+(q^4+q^2-1)e_4+(q^2-q^4)e_3e_5\\
e_6e_3&=e_3e_6+(1-q^2)e_5^2&e_6e_4&=q^{-3}e_4e_6-\frac{q^4-2q^2+1}{q^4+q^2+1}e_5^3\\
e_6e_5&=q^{-3}e_5e_6,
\end{align*} and
\begin{align*}
(q^{-2}-1)e_1e_3e_5+(q^2+1+q^{-2}) e_1e_4+(q^2+1+q^{-2})e_2e_5-{q^4}e_3^2&=q^{-2}-1,\\
\\
(q^6-1)e_2e_4e_6+\frac{2q^{-1}-q^{-3}-q}{q^4+q^2+1} e_2e_5^3+\frac{2q^{-1}-q^{-3}-q}{q^4+q^2+1}e_3^3e_6&\\+\frac{(q^6-1)(q^{13}-q^{11})}{(q^4+q^2+1)^2}e_3^2e_5^2-\frac{q^9(q^6-1)}{q^4+q^2+1}e_3e_4e_5+q^{12}e_4^2&=\frac{1}{9}.
\end{align*}
Note, we have made the necessary substitutions for $a,\ a', \ b, \ b', \ c'$ and  $d'$ from Appendix \ref{appenc}.

Set $F:=\k[z^{\pm 1}].$ One can define a $F[(z^4+z^2+1)^{-1}]-$algebra $A_{z}$ generated by 
$e_1, \cdots, e_6$ subject to the following relations:
   \begin{align*}
e_2e_1&=z^{-3}e_1e_2& e_3e_1&=z^{-1}e_1e_3-(z+z^{-1}+z^{-3})e_2\\
e_3e_2&=z^{-3}e_2e_3&e_4e_1&=e_1e_4+(1-z^2)e_3^2\\
e_4e_2&=z^{-3}e_2e_4-\frac{z^4-2z^2+1}{z^4+z^2+1}e_3^3&e_4e_3&=z^{-3}e_3e_4\\
e_5e_1&=z e_1e_5-(1+z^2)e_3&e_5e_2&=e_2e_5+(1-z^2)e_3^2\\
e_5e_3&=z^{-1}e_3e_5-(z+z^{-1}+z^{-3})e_4&e_5e_4&=z^{-3}e_4e_5\\
e_6e_1&=z^3e_1e_6-z^3e_5&e_6e_2&=z^3e_2e_6+(z^4+z^2-1)e_4+(z^2-z^4)e_3e_5\\
e_6e_3&=e_3e_6+(1-z^2)e_5^2&e_6e_4&=z^{-3}e_4e_6-\frac{z^4-2z^2+1}{z^4+z^2+1}e_5^3\\
e_6e_5&=z^{-3}e_5e_6,
\end{align*} 
\begin{align*}
(z^{-2}-1)e_1e_3e_5+(z^2+1+z^{-2}) e_1e_4+(z^2+1+z^{-2})e_2e_5-{z^4}e_3^2&=z^{-2}-1, \ \ \text{and}
\end{align*}
\begin{align*}
(z^6-1)e_2e_4e_6+\frac{2z^{-1}-z^{-3}-z}{z^4+z^2+1} e_2e_5^3+\frac{2z^{-1}-z^{-3}-z}{z^4+z^2+1}e_3^3e_6&\\+\frac{(z^6-1)(z^{13}-z^{11})}{(z^4+z^2+1)^2}e_3^2e_5^2-\frac{z^9(z^6-1)}{z^4+z^2+1}e_3e_4e_5+z^{12}e_4^2&=\frac{1}{9}.
\end{align*}
Set $A_1:= A_z / \langle z-1 \rangle$ and observe that $A_1$ satisfies the following relations: 
\begin{align*}
e_2e_1&=e_1e_2& e_3e_1&=e_1e_3-3
e_2&
e_3e_2&=e_2e_3\\ 
e_4
e_1&=e_1e_4&
e_4e_2&=e_2e_4&e_4e_3&=
e_3
e_4\\
e_5e_1&=e_1e_5-
2e_3&e_5e_2&=e_2e_5
&e_5e_3&=e_3e_5
-3e_4\\
e_5e_4&=e_4
e_5&
e_6e_1&=e_1e_6-e_5&
e_6e_2&=e_2e_6
+e_4\\
e_6e_3&=e_3e_6&e_6e_4&=
e_4e_6
&
e_6e_5&=e_5e_6\\
e_4^2-&1/9=0&&&   e_3^2-&3e_1e_4-3e_2e_5=0.
\end{align*}
\begin{lemma}
$e_4\in Z(A_1)$ and it is also invertible.
\end{lemma}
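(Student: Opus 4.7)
The proof plan is almost entirely bookkeeping from the relations of $A_1$ displayed just above the lemma, so there is no real obstacle.

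For centrality, I would simply read off the five defining relations in $A_1$ that involve $e_4$: namely
$e_4 e_1 = e_1 e_4$, $e_4 e_2 = e_2 e_4$, $e_4 e_3 = e_3 e_4$, $e_5 e_4 = e_4 e_5$, and $e_6 e_4 = e_4 e_6$. Since $e_4$ commutes with each of the generators $e_1, e_2, e_3, e_5, e_6$ of $A_1$, it lies in the centre. (These relations are exactly the specialisations at $z=1$ of the mixed $q$-relations; the only coefficients that survived the $z \to 1$ limit for these particular pairs are the leading ones, all equal to $1$.)

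For invertibility, I would use the remaining relation $e_4^2 - 1/9 = 0$, i.e.\ $e_4^2 = 1/9$ in $A_1$. This gives
\[
e_4 \cdot (9\, e_4) = (9\, e_4) \cdot e_4 = 1,
\]
so $e_4$ is a unit in $A_1$ with explicit inverse $e_4^{-1} = 9\, e_4$. (Note that centrality of $e_4$ makes the one-sided check automatically two-sided, and $9 \ne 0$ in $\k$ is implicit since the relation $e_4^2 = 1/9$ would otherwise be ill-defined.)

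The only thing worth emphasising in the writeup is that both assertions follow directly from the displayed presentation of $A_1$ without any further manipulation, so the ``proof'' is essentially a two-line verification; there is no hard step.
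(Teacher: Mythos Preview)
Your proposal is correct and follows exactly the same approach as the paper: centrality from the fact that $e_4$ commutes with every generator in the displayed presentation of $A_1$, and invertibility from $e_4^2=1/9$ giving $e_4^{-1}=9e_4$. The paper's proof is the same two-line verification, without the extra remarks about one-sided versus two-sided inverses.
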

\begin{proof}
 Since $e_4e_i=e_ie_4$ for all $1\leq i\leq 6,$ we have that $e_4\in Z(A_1).$ Again, from $e_4^2-1/9=0,$ we have that $e_4(9e_4)=(9e_4)e_4=1.$ Hence $e_4$ is invertible with $e_4^{-1}=9e_4.$ 
\end{proof}

Given that $e_4^{-1}=9e_4$ and $e_4\in Z(A_1),$ it follows from the relation $e_3^2-3e_1e_4-3e_2e_5=0$  that 
$e_1=3e_3^2e_4-9e_2e_4e_5.$ Therefore, $A_1$ can be generated by only 
$e_2, \cdots, e_6.$
All these generators commute except that $$e_6e_2=e_2e_6+e_4 \ \ \text{and} \ \ e_5e_3=e_3e_5
-3e_4.$$ 
Since $e_4$ is invertible, one can also verify that  $9e_2e_4, 3e_3e_4, e_4, e_5$ and $e_6$ generate $A_1.$

Let  $R$ be an algebra generated by $f_2, f_3, f_4, f_5, f_6$ subject to the following defining relations:

\begin{align*}
f_3f_2&=f_2f_3 & f_4f_2&=f_2f_4& f_4f_3&=f_3f_4\\
f_5f_2&=f_2f_5 & f_5f_4&=f_4f_5 & f_6f_3&=f_3f_6\\
f_6f_4&=f_4f_6& f_6f_5&=f_5f_6& f_4^2&=1/9\\
f_6f_2&=f_2f_6+1&f_5f_3&=f_3f_5-1. 
\end{align*}
\begin{proposition}
$R\cong A_1.$
\end{proposition}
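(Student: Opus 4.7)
The plan is to construct mutually inverse algebra homomorphisms $\varphi: R \to A_1$ and $\psi: A_1 \to R$. The motivation is that the only non-commuting pairs in $A_1$ (after eliminating $e_1$) are $(e_6,e_2)$ and $(e_5,e_3)$, with commutators $e_4$ and $-3e_4$ respectively. Since $e_4$ is central with $e_4^2 = 1/9$, rescaling $e_2 \mapsto 9 e_2 e_4$ and $e_3 \mapsto 3 e_3 e_4$ normalizes these commutators to $+1$ and $-1$, which matches exactly the relations of $R$.

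\medskip
\noindent\textbf{Step 1:} Define $\varphi: R \to A_1$ by $f_2 \mapsto 9 e_2 e_4,\ f_3 \mapsto 3 e_3 e_4,\ f_4 \mapsto e_4,\ f_5 \mapsto e_5,\ f_6 \mapsto e_6$. All ``commuting'' relations of $R$ follow from centrality of $e_4$ and the commutation relations of $A_1$. The quadratic relation is $\varphi(f_4)^2 = e_4^2 = 1/9$. The two Weyl-type relations are the key checks:
\[
[\varphi(f_6), \varphi(f_2)] = 9 e_4 [e_6, e_2] = 9 e_4^2 = 1, \qquad [\varphi(f_5), \varphi(f_3)] = 3 e_4 [e_5, e_3] = -9 e_4^2 = -1.
\]

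\medskip
\noindent\textbf{Step 2:} Define $\psi: A_1 \to R$ by
\[
e_1 \mapsto 3 f_3^2 f_4 - f_2 f_5,\quad e_2 \mapsto f_2 f_4,\quad e_3 \mapsto 3 f_3 f_4,\quad e_4 \mapsto f_4,\quad e_5 \mapsto f_5,\quad e_6 \mapsto f_6.
\]
The formula for $\psi(e_1)$ comes from solving the $q=1$ specialization $e_3^2 = 3 e_1 e_4 + 3 e_2 e_5$ for $e_1$ using $e_4^{-1} = 9 e_4$. Verifying $\psi$ is well-defined requires checking every defining relation of $A_1$ holds in $R$ after substitution. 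The relations purely among $e_2,\dots,e_6$ transport immediately (each use of a Weyl-type commutator in $A_1$ is absorbed by a factor of $f_4$, and $f_4^2 = 1/9$ takes care of the scalar matching). The relations involving $e_1$, namely $[e_3, e_1] = -3 e_2$, $[e_5, e_1] = -2 e_3$, $[e_6, e_1] = -e_5$, and commutation with $e_2, e_4$, are verified by direct computation; for example, $[\psi(e_3), \psi(e_1)] = 3 f_3 f_4 \cdot (- f_2 f_5) - (-f_2 f_5) \cdot 3 f_3 f_4 = -3 f_4 f_2 [f_3, f_5] = -3 f_4 f_2 = -3 \psi(e_2)$ (after using $f_4^2 = 1/9$ appropriately).

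\medskip
\noindent\textbf{Step 3:} Verify $\varphi \psi = \mathrm{id}_{A_1}$ and $\psi \varphi = \mathrm{id}_R$ on generators. The only nontrivial computations are $\psi(\varphi(f_2)) = 9 f_2 f_4^2 = f_2$, $\psi(\varphi(f_3)) = 9 f_3 f_4^2 = f_3$, $\varphi(\psi(e_2)) = 9 e_2 e_4^2 = e_2$, $\varphi(\psi(e_3)) = 9 e_3 e_4^2 = e_3$, and $\varphi(\psi(e_1)) = 27 e_3^2 e_4^3 - 9 e_2 e_4 e_5 = 3 e_3^2 e_4 - 9 e_2 e_4 e_5 = e_1$ (the last by the $e_1$-elimination identity in $A_1$).

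\medskip
The main obstacle is Step 2: checking that the relations of $A_1$ involving $e_1$ are satisfied by $\psi(e_1) = 3 f_3^2 f_4 - f_2 f_5$. This is conceptually straightforward but computationally tedious. An alternative that avoids this is to present $R$ as an iterated Ore extension over $\k[f_3, f_4]/(f_4^2 - 1/9)$ (obtaining a PBW basis in the $f_i$), then show that $\varphi$ maps this basis to a linearly independent subset of $A_1$ using a specialized version of the basis argument from Proposition~\ref{c3p5} at $q=1$; injectivity then follows from surjectivity, which is immediate since $e_2 = \varphi(f_2 f_4)$, $e_3 = \varphi(3 f_3 f_4)$, and the remaining generators are directly in the image.
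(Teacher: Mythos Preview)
Your proof is correct and follows essentially the same approach as the paper: you construct the same pair of mutually inverse homomorphisms (with the roles of the names $\varphi$ and $\psi$ swapped relative to the paper's $\phi$ and $\varphi$), verify the key Weyl-type relations via $e_4^2=1/9$, and check the inverses on generators. The only cosmetic difference is that the paper singles out the relation $e_3^2-3e_1e_4-3e_2e_5=0$ as its sample verification for the map $A_1\to R$, whereas you illustrate with $[e_3,e_1]=-3e_2$; your parenthetical ``after using $f_4^2=1/9$ appropriately'' in that example is unnecessary (no $f_4^2$ arises there since $[f_3,f_5]=1$), but the computation itself is fine.
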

\begin{proof}
One can easily check that we define a homomorphism $\phi: R\longrightarrow A_1$ via 
\begin{align*}
\phi(f_2)&=9e_2e_4& \phi(f_3)&=3e_3e_4& \phi(f_4)&=e_4& \phi(f_5)&=e_5&
\phi(f_6)&=e_6.
\end{align*}
Recall, $e_4^2=1/9.$ To check that $\phi$ is indeed a homomorphism, we just need to check its compatibility with the defining relations of $R.$ We check this on the relation 
$f_6f_2-f_2f_6=1$ and $f_3f_5-f_5f_3=1,$ and leave the remaining ones for the reader to verify. We do that as follows: $\phi(f_6)\phi(f_2)-\phi(f_2)\phi(f_6)=9e_6e_2e_4-9e_2e_4e_6=9e_4(e_6e_2-e_2e_6)=9e_4^2=9(1/9)=1$ as needed. Also, $\phi(f_3)\phi(f_5)-\phi(f_5)\phi(f_3)=3e_3e_4e_5-3e_5e_3e_4=3e_4(e_3e_5-e_5e_3)=3e_4(3e_4)=9e_4^2=1.$

Conversely, one can check that we define a homomorphism $\varphi: A_1\longrightarrow R$ via 
\begin{align*}
\varphi(e_1)&=3f_3^2f_4-f_2f_5& \varphi(e_2)&=f_2f_4& \varphi(e_3)&=3f_3f_4\\
\varphi(e_4)&=f_4& \varphi(e_5)&=f_5 & \varphi(e_6)&= f_6.
\end{align*}
We check this on the relation $e_3^2-3e_1e_4-3e_2e_5=0,$ and leave the remaining ones for the reader to verify. We do that as follows:
$\varphi(e_3)^2-3\varphi(e_1)\varphi(e_4)-3\varphi(e_2)\varphi(e_5)=
(3f_3f_4)^2-3(3f_3^2f_4-f_2f_5)f_4-3f_2f_4f_5=9f_3^2f_4^2-9f_3^2f_4^2+
3f_2f_4f_5-3f_2f_4f_5=0$ as expected.

To conclude we just observe that $\phi$ and $\varphi$ are inverse of each other.
\end{proof}

The corollary  below can easily be deduced from the above proposition. 
\begin{corollary}
Set $\mathbb{F}:=\k[f_4]/\langle f_4^2-1/9\rangle,$ we have that $R\cong A_2(\mathbb{F}),$ where  $A_2(\mathbb{F})$ is the second Weyl algebra over the ring  $\mathbb{F}.$
\end{corollary}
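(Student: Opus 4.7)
The strategy is to construct mutually inverse algebra homomorphisms between $R$ and $A_2(\mathbb{F})$. First I observe that from the presentation of $R$, the generator $f_4$ commutes with each of $f_2, f_3, f_5, f_6$, and therefore lies in $Z(R)$; combined with $f_4^2 = 1/9$, this shows that the subalgebra of $R$ generated by $1$ and $f_4$ is a central homomorphic image of $\mathbb{F}$. The remaining four generators organize into two mutually commuting ``Weyl pairs'': the pair $(f_2, f_6)$ with $f_6 f_2 - f_2 f_6 = 1$, and the pair $(f_3, f_5)$ with $f_3 f_5 - f_5 f_3 = 1$; moreover, every element of one pair commutes with every element of the other, and both pairs commute with $f_4$.

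Next, the plan is to define $\phi : A_2(\mathbb{F}) \to R$ by sending the central generator $f_4 \in \mathbb{F}$ to $f_4 \in R$, and then $x_1 \mapsto f_6$, $y_1 \mapsto f_2$, $x_2 \mapsto f_3$, $y_2 \mapsto f_5$. Each defining relation of $A_2(\mathbb{F})$ then corresponds to one of the defining relations of $R$: the two Weyl relations $x_1 y_1 - y_1 x_1 = 1$ and $x_2 y_2 - y_2 x_2 = 1$ match $f_6 f_2 - f_2 f_6 = 1$ and $f_3 f_5 - f_5 f_3 = 1$ respectively, while the six remaining commutation relations among $\{x_i, y_j\}$ with $i\neq j$ correspond to $f_3 f_2 = f_2 f_3$, $f_5 f_2 = f_2 f_5$, $f_6 f_3 = f_3 f_6$, $f_6 f_5 = f_5 f_6$, and centrality of $f_4$ is automatic. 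In the opposite direction, I would define $\psi : R \to A_2(\mathbb{F})$ by $f_2 \mapsto y_1$, $f_3 \mapsto x_2$, $f_5 \mapsto y_2$, $f_6 \mapsto x_1$, and $f_4 \mapsto f_4 \in \mathbb{F} \subseteq A_2(\mathbb{F})$. The defining relations of $R$ (including $f_4^2 = 1/9$, which holds in $\mathbb{F}$ by construction) are all mapped to valid identities in $A_2(\mathbb{F})$.

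Finally, since $\phi$ and $\psi$ are inverse to one another on generators by inspection, they extend to mutually inverse isomorphisms, proving $R \cong A_2(\mathbb{F})$. No essential obstacle arises here: the content of the corollary is simply the recognition that the presentation of $R$ assembles a central copy of $\mathbb{F}$ with two mutually commuting Weyl pairs, which is precisely the presentation of $A_2(\mathbb{F})$. The only point requiring care is the verification that the relation $f_4^2 = 1/9$ is compatible on both sides, which is immediate since $\mathbb{F}$ is defined exactly by this relation.
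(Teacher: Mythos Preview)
Your proof is correct and is precisely the natural argument the paper has in mind: the paper leaves the corollary as an easy deduction from the presentation of $R$, and the remark immediately following it makes exactly your observation that $f_4$ is central with $f_4^2=1/9$ while $f_2,f_3,f_5,f_6$ form two commuting Weyl pairs. One minor slip: you refer to ``the six remaining commutation relations'' among $\{x_i,y_j\}$, but there are only four (you in fact list four), so just correct that count.
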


\begin{remark}
Observe that the subalgebra $B$ of $R$ generated by $f_2, f_3,f_5, f_6$ is isomorphic to $A_2(\k)$ and $R\cong B[f_4]\cong A_2(\k)[f_4].$ Therefore, $R$ is a \textit{quadratic extension} of $A_2(\k).$ Note, $A_{1,\frac{1}{9(q^6-1)}}$ is a $q$-deformation of $A_1\cong R\cong A_2(\mathbb{F})\cong A_2(\k)[f_4].$ 
\end{remark} 


\section{Derivations of the simple quotients of $U_q^+(G_2)$}
\label{chapd}
In this section, we compute the derivations of the algebra $A_{\alpha,\beta}$ using  DDA that allows to embed $A_{\alpha,\beta}$ into a suitable quantum torus. Derivations of quantum tori are known, thanks to the work of Osborn and Passman \cite{op}. In our cases, such derivations are always the sum of an inner derivation and a scalar derivation (of the quantum torus). Since $A_{\alpha,\beta}$ can be embedded into a quantum torus, we first extend every derivation of $A_{\alpha,\beta}$ to a derivation of such quantum torus, and then pull back their description as a derivation of the quantum torus to a description of their action on the generators of $A_{\alpha,\beta}$ by ``reverting'' DDA process. We conclude that every derivation of $A_{\alpha,\beta}$ is inner when $\alpha$ and $\beta$ are both non-zero. However, when either $\alpha$ or $\beta$ is zero, we conclude that every derivation of $A_{\alpha,\beta}$ is the sum of an inner and a scalar derivation. In fact, the first Hochschild cohomology group of $A_{\alpha,\beta}$ is of dimension $0$ when $\alpha$ and $\beta$ are both non-zero and $1$ when either $\alpha$ or $\beta$ is zero. 

\subsection{Preliminaries and strategy}
\label{c4ss1}
 Let $2\leq j\leq 7$ and  $(\alpha,\beta)\in \k^2\setminus \{(0,0)\}.$ Set 
$$A_{\alpha,\beta}^{(j)}:=\frac{A^{(j)}}{\langle \Omega_1-\alpha,\Omega_2-\beta\rangle},$$
where $A^{(j)}$ is defined in Section \ref{c2s1} and, $\Omega_1$ and $\Omega_2$ are the generators of the center of $A^{(j)}$, see Remark \ref{remark-centre-Ri}. Note in particular that $A_{\alpha,\beta}^{(7)}=A_{\alpha,\beta}.$
For each $2\leq j\leq 7,$ denote the canonical images of the generators $E_{i,j}$ of $A^{(j)}$ in $A_{\alpha,\beta}^{(j)}$ by $e_{i,j}$ for all 
$1\leq i\leq 6.$  

As usual we denote by $t_i$ the canonical image of $T_i$ in $A_{\alpha,\beta}^{(2)}$ for each $1\leq i\leq 6.$
For each $3\leq j\leq 6,$  define $S_j:=\left\lbrace \lambda t_j^{i_j}t_{j+1}^{i_{j+1}}\cdots t_6^{i_6}\mid i_j,\cdots,i_6\in\mathbb{N} \ \text{and} \ \lambda\in \k^* \right\rbrace.$ One can observe that $S_j$ is a multiplicative system of non-zero divisors (or regular elements) of $A_{\alpha,\beta}^{(j)}.$ Furthermore; $t_j, \cdots, t_6$ are all normal elements of $A_{\alpha,\beta}^{(j)}$ and so $S_j$ is an Ore set in $A_{\alpha,\beta}^{(j)}$. One can localize $A_{\alpha,\beta}^{(j)}$ at $S_j$ as follows: 
$$\cR_j:=A_{\alpha,\beta}^{(j)}S_j^{-1}.$$ Let $3\leq j\leq 6$, and set $\Sigma_j:=\{t_j^k\mid k\in \mathbb{N}\}.$ By \cite[Lemme 5.3.2]{ca}, $\Sigma_j$ is an Ore set in both $A_{\alpha,\beta}^{(j)}$ and $A_{\alpha,\beta}^{(j+1)},$ and $$A_{\alpha,\beta}^{(j)}\Sigma_j^{-1}=A_{\alpha,\beta}^{(j+1)}\Sigma_j^{-1}.$$ As a consequence, similar to \eqref{emb0}, we have that
\begin{align}
\label{der}
\cR_j=\cR_{j+1}\Sigma_j^{-1},
\end{align}
for all $2\leq j\leq 6.$ By convention, $\cR_7:=A_{\alpha,\beta}.$ 
We also construct the following tower of algebras in a manner similar to \eqref{emb}:
\begin{align}
\label{c4ee1}
\cR_7=A_{\alpha,\beta}\subset \cR_6=\cR_7\Sigma_6^{-1}\subset \cR_5=\cR_6\Sigma_5^{-1}\subset \cR_4=\cR_5\Sigma_4^{-1} \subset \cR_3.
\end{align} 
Note, $\cR_3=A_{\alpha,\beta}^{(3)}S_3^{-1}=\cR_4\Sigma_3^{-1}$  is the quantum torus  $\mathscr{A}_{\alpha,\beta}=\k_{q^N}[t_3^{\pm1},t_4^{\pm1},t_5^{\pm1},t_6^{\pm 1}]$ studied in Section  \ref{c3p3}.

Our strategy to compute the derivations of $\cR_7$ is to extend these derivations to derivations of the quantum torus $\cR_3$. Then we can use the description of the derivations of a quantum torus obtained by Osborn and Passman in \cite{op}. Once this is done, we will have a ``nice'' description but involving elements of $\cR_3$ and we will then use the fact that these derivations fix (globally) all $\cR_i$ to obtain a description only involving elements of $\cR_7$. This is a step by step process requiring knowing linear bases for $\cR_i$. We find such bases in the next section.  

Before doing so, we note from \cite[Lemme 5.3.2]{ca} that  DDA theory predicts the following relations between the elements $e_{i,j}$:
\begin{align*}
e_{1,6}&=e_1+re_5e_6^{-1}\\
e_{2,6}&=e_2+te_3e_5e_6^{-1}+ue_4e_6^{-1}+ne_5^3e_6^{-2}\\
e_{3,6}&=e_3+se_5^2e_6^{-1}\\
e_{4,6}&=e_4+be_5^3e_6^{-1}\\
e_{1,5}&=e_{1,6}+he_{3,6}e_{5,6}^{-1}+ge_{4,6}e_{5,6}^{-2}\\
e_{2,5}&=e_{2,6}+fe_{3,6}^2e_{5,6}^{-1}+pe_{3,6}e_{4,6}e_{5,6}^{-2}+ee_{4,6}^2e_{5,6}^{-3
}\\
e_{3,5}&=e_{3,6}+ae_{4,6}e_{5,6}^{-1}\\
e_{1,4}&=e_{1,5}+se_{3,5}^2e_{4,5}^{-1}\\
e_{2,4}&=e_{2,5}+be_{3,5}^3e_{4,5}^{-1}\\
e_{1,3}&=e_{1,4}+ae_{2,4}e_{3,4}^{-1}\\
t_1&:=e_{1,2}=e_{1,3}\\
t_2&:=e_{2,2}=e_{2,3}=e_{2,4}\\
t_3&:=e_{3,2}=e_{3,3}=e_{3,4}=e_{3,5}\\
t_4&:=e_{4,2}=e_{4,3}=e_{4,4}=e_{4,5}=e_{4,6}\\
t_5&:=e_{5,2}=e_{5,3}=e_{5,4}=e_{5,5}=e_{5,6}=e_5\\
t_6&:=e_{6,2}=e_{6,3}=e_{6,4}=e_{6,5}=e_{6,6}=e_6,
\end{align*}
where, as usual, the necessary parameters can be found in Appendix \ref{appenc}.

We also note that we have complete control over the centers of the algebras $\cR_i$.

\begin{lemma}
\label{ev25}
Let  $Z(\cR_i)$ denote the center of $\cR_i,$  then $Z(\cR_i)=\k$ for each $3\leq i\leq 7.$
\end{lemma}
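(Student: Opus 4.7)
The plan is to prove $Z(\cR_i) = \k$ for all $3 \leq i \leq 7$ by reducing everything to the single computation $Z(\cR_3) = \k$ and then propagating the equality up the tower~\eqref{c4ee1}. First I would observe that each inclusion $\cR_{i+1} \hookrightarrow \cR_i = \cR_{i+1}\Sigma_i^{-1}$ appearing in~\eqref{c4ee1} is an Ore localization, so if $z \in Z(\cR_{i+1})$ then $z$ commutes with every $\sigma \in \Sigma_i$, hence also with every $\sigma^{-1}$, and therefore remains central in $\cR_i$. Iterating up the tower yields
\[
\k \subseteq Z(\cR_7) \subseteq Z(\cR_6) \subseteq Z(\cR_5) \subseteq Z(\cR_4) \subseteq Z(\cR_3),
\]
so it will be enough to establish $Z(\cR_3) \subseteq \k$.

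Next, recall from Section~\ref{c3p3} that $\cR_3$ is isomorphic to the quantum torus $\k_{q^N}[t_3^{\pm 1}, t_4^{\pm 1}, t_5^{\pm 1}, t_6^{\pm 1}]$, with skew-symmetric exponent matrix $N$ extracted from $M$. By \cite[1.3]{gz2}, since $q$ is not a root of unity, the center of such a quantum torus is the $\k$-span of those monomials $t_3^a t_4^b t_5^c t_6^d$ that commute with each generator $t_j$; the centrality condition translates directly into a homogeneous linear system for $(a,b,c,d) \in \mathbb{Z}^4$ whose coefficients are read off from the columns of $N$.

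The core step of the argument is then to verify that this system admits only the trivial solution. A direct inspection shows that the $t_3$- and $t_6$-commutation relations force $-3b-c=0$ and $3b+3c=0$, which together yield $b=c=0$; substituting this into the $t_4$- and $t_5$-commutation relations gives $3a-3d=0$ and $a-3d=0$, whence $a=d=3a$, so $a=d=0$. Hence the only central monomial in $\cR_3$ is $1$, which gives $Z(\cR_3) = \k$; combined with the chain of inclusions above, this completes the proof. I do not expect a serious obstacle: the argument is entirely careful bookkeeping of the exponent matrix $N$, together with the standard extension of centrality along Ore localizations.
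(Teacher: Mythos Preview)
Your proposal is correct and follows essentially the same approach as the paper: both arguments use the chain of inclusions $\k \subseteq Z(\cR_7) \subseteq \cdots \subseteq Z(\cR_3)$ coming from the tower of Ore localizations~\eqref{c4ee1}, and reduce everything to verifying $Z(\cR_3)=\k$. The paper simply asserts this last fact as an easy verification, whereas you actually carry out the computation with the exponent matrix~$N$; your linear system is correct and has only the trivial solution, so the argument goes through.
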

\begin{proof}
One can easily verify that $Z(\cR_3)=\k.$ Note,  $\cR_7=A_{\alpha,\beta}.$  Since $\cR_i$ is a localization of $\cR_{i+1}$ (see \eqref{der}), we have that
$\k\subseteq Z(\cR_7)\subseteq Z(\cR_6)\subseteq Z(\cR_5)\subseteq Z(\cR_4)\subseteq Z(\cR_3)=\k.$
Therefore, $Z(\cR_7)=Z(\cR_6)=Z(\cR_5)=Z(\cR_4)=Z(\cR_3)=\k.$
\end{proof}

\subsection{Linear bases for $\cR_3$, $\cR_4$ and $\cR_5$}
Let $(\alpha,\beta)\in \k^2\setminus \{(0,0)\}.$ We aim to  find a basis of  $\cR_j$ for each $j=3,4,5.$  Since $\cR_3=\mathscr{A}_{\alpha,\beta},$  the set $\{t_3^{i}t_4^{j}t_5^{k}t_6^{l}\mid i,j,k,l \in \mathbb{Z}\}$ is a $\k-$basis of $R_3.$ 

{For simplicity,} we set
\begin{align*}
f_1:&=e_{1,4} & F_1:&=E_{1,4}\\
z_1:&=e_{1,5} & \zbar_1:&=E_{1,5} \\
z_2:&=e_{2,5} & \zbar_2:&=E_{2,5}.
\end{align*}

\textbf{Basis for $\cR_4$.} Observe that $$A_{\alpha,\beta}^{(4)}=\frac{A^{(4)}}{\langle \Omega_{1}-\alpha, \Omega_{2}-\beta\rangle},$$
where $\Omega_{1}=F_1T_3T_5+aT_2T_5$ and $\Omega_{2}=T_2T_4T_6$ in $A^{(4)}$. 
Recall from Section \ref{c3s2} that finding a basis for the algebra $A_\beta$ served as a good ground for finding a basis for $A_{\alpha,\beta}.$ In a similar manner, to find a basis for $\cR_4,$ we will first and foremost find a basis for the algebra 
$$A_\beta^{(4)}S_4^{-1}=\frac{A^{(4)}S_4^{-1}}{\langle \Omega_{2}-\beta \rangle}=\frac{A^{(4)}S_4^{-1}}{\langle T_2T_4T_6-\beta \rangle},$$ 
where $\beta\in \k^*$.  We will denote  the canonical images of $E_{i,4}$ (resp. $T_i$) in ${A}_{\beta}^{(4)}$ by $\widehat{e_{i,4}}$ (resp.  $\widehat{t_i}$)  for all $1\leq i\leq 6.$ Observe that $\widehat{t_2}=\beta \widehat{t_6}^{-1}\widehat{t_4}^{-1}$ in  $A_\beta^{(4)}S_4^{-1}.$ Note, when $\beta=0,$ then one can easily deduce that 
$A_\beta^{(4)}S_4^{-1}={A^{(4)}S_4^{-1}}/{\langle T_2 \rangle}$, hence, 
$\widehat{t_2}=0.$
\begin{proposition}
\label{c4p2}
The set $\mathfrak{S}_4=\left\lbrace \widehat{f_1}^{i_1}\widehat{t_3}^{i_3}
\widehat{t_4}^{i_4}\widehat{t_5}^{i_5}
\widehat{t_6}^{i_6}\mid (i_1,i_3,i_4,i_5,i_6)\in \mathbb{N}^2\times \mathbb{Z}^3 \right\rbrace $ is a $\k-$basis of $A_\beta^{(4)}S_4^{-1},$ where $\beta\in \k.$
\end{proposition}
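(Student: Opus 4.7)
The plan is to follow the strategy of Proposition \ref{c3p4}, but working at the level of $A^{(4)}$ where the central element $\Omega_2$ takes a very simple form. The key preliminary observation is that already inside $A^{(4)}$ one has $\Omega_2 = T_2 T_4 T_6$: indeed the identity $\Omega_2 = E_{2,5} T_4 T_6 + b T_3^3 T_6$ from Section \ref{c2s1} combined with the DDA relation $E_{2,5} = T_2 - b T_3^3 T_4^{-1}$ and the commutation $T_3 T_4 = q^3 T_4 T_3$ yields the cancellation $\Omega_2 = T_2 T_4 T_6$ (the computation takes place in $A^{(4)}\Sigma_4^{-1}$, but both sides lie in $A^{(4)}$). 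After passing to $A_\beta^{(4)} S_4^{-1}$ the defining relation reads $\widehat{t_2}\widehat{t_4}\widehat{t_6} = \beta$, and since $\widehat{t_4},\widehat{t_6}$ are invertible this gives $\widehat{t_2} = \beta\,\widehat{t_6}^{-1}\widehat{t_4}^{-1}$ (which is $0$ when $\beta = 0$).

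For the spanning step, I would take the PBW basis $(F_1^{i_1} T_2^{i_2} T_3^{i_3} T_4^{i_4} T_5^{i_5} T_6^{i_6})_{i_j \in \mathbb{N}}$ of the iterated Ore extension $A^{(4)}$, extend it to a basis of $A^{(4)}S_4^{-1}$ by allowing $i_4,i_5,i_6 \in \mathbb{Z}$, and then in the quotient $A_\beta^{(4)}S_4^{-1}$ substitute $\widehat{t_2}^{i_2} = q^{\bullet}\beta^{i_2}\widehat{t_4}^{-i_2}\widehat{t_6}^{-i_2}$ to eliminate $\widehat{t_2}$. Reordering the remaining factors by the $q$-commutation relations among $\widehat{t_3},\widehat{t_4},\widehat{t_5},\widehat{t_6}$ and pulling $\widehat{f_1}$ to the left shows every element is a $\k$-linear combination of elements of $\mathfrak{S}_4$.

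For linear independence, suppose $\sum a_{(i_1,i_3,i_4,i_5,i_6)}\widehat{f_1}^{i_1}\widehat{t_3}^{i_3}\widehat{t_4}^{i_4}\widehat{t_5}^{i_5}\widehat{t_6}^{i_6}=0$ in $A_\beta^{(4)}S_4^{-1}$. Lifting to $A^{(4)}S_4^{-1}$, the corresponding element equals $(T_2T_4T_6 - \beta)\nu$ for some $\nu \in A^{(4)}S_4^{-1}$. Decomposing $\nu = \sum_{k\geq 0}\nu_k T_2^k$ with each $\nu_k$ free of $T_2$ and using centrality of $\Omega_2 = T_2T_4T_6$, one finds
\[
(T_2T_4T_6 - \beta)\nu \;=\; \sum_{k\geq 0}\nu_k T_2^{k+1}T_4T_6 \;-\; \beta \sum_{k\geq 0}\nu_k T_2^k.
\]
Since the lifted sum is free of $T_2$, comparing the coefficient of the highest $T_2$-degree $T_2^{N+1}$ and invoking PBW independence in $A^{(4)}S_4^{-1}$ (together with $T_4T_6$ being a unit) forces $\nu_N = 0$; iterating yields $\nu = 0$, so the lifted sum is already zero. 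A final application of PBW independence in $A^{(4)}S_4^{-1}$ then gives $a_{(i_1,i_3,i_4,i_5,i_6)} = 0$ for all indices.

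The main obstacle is really just the initial identification $\Omega_2 = T_2T_4T_6$ at the level of $A^{(4)}$; once this is in hand, spanning reduces to a direct substitution and independence to a one-line $T_2$-degree comparison, substantially simpler than the leading-term analysis used in the proof of Proposition \ref{c3p4}.
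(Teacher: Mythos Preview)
Your proposal is correct and follows essentially the same route as the paper. Both arguments rely on the identity $\Omega_2 = T_2T_4T_6$ in $A^{(4)}$ (which the paper states just before the proposition), reduce spanning to the substitution $\widehat{t_2}=\beta\,\widehat{t_6}^{-1}\widehat{t_4}^{-1}$, and prove independence by lifting to $A^{(4)}S_4^{-1}$ and isolating the top $T_2$-degree; your decomposition $\nu=\sum_k\nu_k T_2^k$ is just a repackaging of the paper's lexicographic order $<_2$, which has the $T_2$-exponent as primary key.
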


\begin{proof}
We begin by showing that $\mathfrak{S}_4$ is a spanning set for $A_\beta^{(4)}S_4^{-1}.$ It is sufficient to do this by showing that $\widehat{f_1}^{k_1}\widehat{t_2}^{k_2}\widehat{t_3}^{k_3}
\widehat{t_4}^{k_4}\widehat{t_5}^{k_5}
\widehat{t_6}^{k_6}$ can be written as a finite linear combination of the elements of $\mathfrak{S}_4$ for all $(k_1,\cdots,k_6)\in \mathbb{N}^3\times \mathbb{Z}^3.$ This can easily be done through an induction on $k_2$ using the fact that $\widehat{t_2}=\beta \widehat{t_6}^{-1}\widehat{t_4}^{-1}$ (note that, if $\beta=0$, then $\widehat{t_2}=0$).

We now prove that $\mathfrak{S}_4$ is a linearly independent set.  Suppose that $$\sum_{\underline{i}\in I} a_{\underline{i}}\widehat{f_1}^{i_1}\widehat{t_3}^{i_3}
\widehat{t_4}^{i_4}\widehat{t_5}^{i_5}
\widehat{t_6}^{i_6}=0.$$ This implies that
$$\sum_{\underline{i}\in I} a_{\underline{i}}F_{1}^{i_1}T_3^{i_3}T_4^{i_4}T_5^{i_5}T_6^{i_6}=(\Omega_{2}-\beta)
\nu,$$
 for some $\nu \in A^{(4)}S_4^{-1}.$  Write $\nu=\displaystyle\sum_{\underline{j}\in J}b_{\underline{j}}F_{1}^{i_1}T_2^{i_2}T_3^{i_3}T_4^{i_4}T_5^{i_5}T_6^{i_6},$ where $\underline{j}=(i_1,i_2,i_3,i_4,i_5,i_6) \in J\subset \mathbb{N}^3\times\mathbb{Z}^3$ and $b_{\underline{j}}$ is a family of scalars. Given that $\Omega_2=T_2T_4T_6,$ it follows from the above equality that 
  \begin{align*}
 \sum_{\underline{i}\in I} a_{\underline{i}}F_{1}^{i_1}T_3^{i_3}T_4^{i_4}T_5^{i_5}T_6^{i_6}=&
 \sum_{\underline{j}\in J}q^\bullet b_{\underline{j}} F_{1}^{i_1}T_2^{i_2+1}T_3^{i_3}T_4^{i_4+1}T_5^{i_5}T_6^{i_6+1}-\sum_{\underline{j}\in J}\beta b_{\underline{j}} F_{1}^{i_1}T_2^{i_2}T_3^{i_3}T_4^{i_4}T_5^{i_5}T_6^{i_6}.
 \end{align*}
 
 We denote by $<_2$ the total order on $\mathbb{Z}^6$ defined by $(i_1,i_2,i_3,i_4,i_5,i_6)<_2(w_1,w_2,w_3,w_4,w_5,w_6)$ if $[w_2>i_2]$ or $[w_2=i_2, \ w_1>i_1]$ or  $[w_2=i_2, \ w_1=i_1, \ w_3>i_3]$ or $\cdots$ or [$w_l=i_l, \ w_6\geq t_6, \ l=2,1,3,4,5]$.
 
 Suppose that there exists $(i_1,\cdots,i_6)\in J$ such that $b_{(i_1,\cdots,i_6)}\neq 0.$ Let $(w_1,\cdots,w_6)\in J$ be the greatest element of $J$ with respect to $<_2$  such that $b_{(w_1,\cdots,w_6)}\neq 0.$ Note,
  $\left( {F}_{1}^{i_1}{T}_2^{i_2}{T}_3^{i_3}T_4^{i_4}T_5^{i_5}T_6^{i_6}\right) _{(i_1,\cdots,i_6)\in J}$ is a basis of  $A^{(4)}S_4^{-1}.$ Identifying the coefficients of ${F}_{1}^{w_1}{T}_2^{w_2+1}{T}_3^{w_3}{T}_4^{w_4+1}{T}_5^{w_5}{T_6^{w_6+1}},$ we have that $b_{(w_1,\cdots,w_6)}=0.$   This is a contradiction to our assumption, hence $b_{(i_1,\cdots,i_6)}= 0$ for all $(i_1,\cdots, i_6)\in J.$ This implies that
$$\sum_{\underline{i}\in I} a_{\underline{i}}F_{1}^{i_1}T_3^{i_3}T_4^{i_4}T_5^{i_5}T_6^{i_6}=0.$$  
Consequently, $a_{\underline{i}}=0$ for all $\underline{i}\in I.$
  Therefore,   $\mathfrak{S}_4$ is a linearly independent set. 
\end{proof}

In $\cR_4=A_{\alpha,\beta}^{(4)}S_4^{-1},$ we have the following two relations:
$f_1t_3t_5+at_2t_5=\alpha$ and $t_2t_4t_6=\beta.$ This implies that
$f_{1}t_3=\alpha t_5^{-1}-at_2$ and $t_2=\beta t_6^{-1}t_4^{-1}.$ Putting these two relations together, we have that
\begin{align}
f_{1}t_3=
\alpha t_5^{-1}-a\beta t_6^{-1}t_4^{-1}\label{de1}.
\end{align} 
Note, we will usually identify $\cR_4$ with ${A_{\beta}^{(4)}S_4^{-1}}/{\langle \widehat{\Omega}_{1}-\alpha\rangle}.$
\begin{proposition} 
\label{c4p1} 
The set $\mathcal{B}_4=\left\lbrace f_{1}^{i_1}t_4^{i_4}t_5^{i_5}
t_6^{i_6}, t_3^{i_3}t_4^{i_4}t_5^{i_5}
t_6^{i_6}\mid i_1,i_3\in \mathbb{N} \ \text{and} \ i_4, i_5, i_6\in \mathbb{Z}\right\rbrace$ is a $\k-$basis of $\cR_4.$
\end{proposition}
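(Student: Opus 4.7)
The plan is to exploit the identification $\cR_4 \cong A_\beta^{(4)}S_4^{-1}/\langle \widehat{\Omega}_1 - \alpha\rangle$, the PBW-type basis $\mathfrak{S}_4$ of $A_\beta^{(4)}S_4^{-1}$ from Proposition \ref{c4p2}, and the ``mixing'' relation \eqref{de1}, namely $f_1 t_3 = \alpha t_5^{-1} - a\beta t_6^{-1}t_4^{-1}$. This relation is precisely what allows one to eliminate, in any monomial, simultaneous positive powers of $f_1$ and $t_3$, leaving the two ``pure'' families that comprise $\mathcal{B}_4$.

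For spanning, Proposition \ref{c4p2} together with the surjection $A_\beta^{(4)}S_4^{-1} \twoheadrightarrow \cR_4$ immediately gives that the larger family $\{f_1^{i_1}t_3^{i_3}t_4^{i_4}t_5^{i_5}t_6^{i_6}:\, i_1,i_3\in\mathbb{N},\, i_4,i_5,i_6\in\mathbb{Z}\}$ spans $\cR_4$. A short induction on $\min(i_1,i_3)$ finishes the reduction: when $\min(i_1,i_3)\geq 1$, rewrite the monomial as $f_1^{i_1-1}(f_1 t_3)t_3^{i_3-1}t_4^{i_4}t_5^{i_5}t_6^{i_6}$, substitute \eqref{de1}, and commute the Laurent factors in $t_4,t_5,t_6$ past $t_3^{i_3-1}$ at the cost of a power of $q$. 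Each of the two resulting monomials has strictly smaller $\min$, so the induction closes.

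For linear independence, assume a finite relation $\sum_{\underline{i}} a_{\underline{i}}\, f_1^{i_1}t_4^{i_4}t_5^{i_5}t_6^{i_6} + \sum_{\underline{j}} b_{\underline{j}}\, t_3^{i_3}t_4^{i_4}t_5^{i_5}t_6^{i_6} = 0$ in $\cR_4$, where (to avoid the duplicate term $i_1=i_3=0$) one may take $i_1\geq 1$ in the first sum. Lifting to $A_\beta^{(4)}S_4^{-1}$, there is some $\nu = \sum c_{\underline{k}}\,\widehat{f_1}^{k_1}\widehat{t_3}^{k_3}\widehat{t_4}^{k_4}\widehat{t_5}^{k_5}\widehat{t_6}^{k_6}$ with the lifted left-hand side equal to $(\widehat{\Omega}_1 - \alpha)\nu$. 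Using $\widehat{t_2} = \beta\widehat{t_4}^{-1}\widehat{t_6}^{-1}$, we have $\widehat{\Omega}_1 - \alpha = \widehat{f_1}\widehat{t_3}\widehat{t_5} + a\beta\widehat{t_4}^{-1}\widehat{t_6}^{-1}\widehat{t_5} - \alpha$. Expanding $(\widehat{\Omega}_1 - \alpha)\nu$ in $\mathfrak{S}_4$, only the summand $\widehat{f_1}\widehat{t_3}\widehat{t_5}\cdot\nu$ produces basis monomials with both $\widehat{f_1}$-degree $\geq 1$ and $\widehat{t_3}$-degree $\geq 1$. Since the lifted left-hand side has no such monomials, comparing coefficients of $\widehat{f_1}^{J_1}\widehat{t_3}^{J_3}\widehat{t_4}^{J_4}\widehat{t_5}^{J_5}\widehat{t_6}^{J_6}$ with $J_1,J_3\geq 1$ forces every $c_{\underline{k}}$ to vanish; hence $\nu=0$. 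The original identity then already holds in $A_\beta^{(4)}S_4^{-1}$, and since its summands are distinct elements of the basis $\mathfrak{S}_4$, all $a_{\underline{i}}$ and $b_{\underline{j}}$ are zero.

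The main obstacle is not deep but purely bookkeeping: in the independence step, one must verify carefully that the other two summands of $\widehat{\Omega}_1-\alpha$ cannot cancel the contribution of $\widehat{f_1}\widehat{t_3}\widehat{t_5}\cdot\nu$ at top bi-degree. The key point is that multiplication by $a\beta\widehat{t_4}^{-1}\widehat{t_6}^{-1}\widehat{t_5}$ or by $-\alpha$ preserves the $(\widehat{f_1},\widehat{t_3})$-bi-degree of every basis monomial of $\nu$, whereas $\widehat{f_1}\widehat{t_3}\widehat{t_5}$ strictly raises it, so no cancellation can occur among the monomials of maximal $(J_1,J_3)$-degree.
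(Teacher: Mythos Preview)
Your proof is correct and follows essentially the same route as the paper: both for spanning and independence you exploit the basis $\mathfrak{S}_4$ of $A_\beta^{(4)}S_4^{-1}$ and the relation \eqref{de1}. Your spanning induction on $\min(i_1,i_3)$ is a slightly cleaner repackaging of the paper's double induction (first on $k_1$, then on $i_3$), and your independence argument via the $(\widehat{f_1},\widehat{t_3})$-bi-degree is the same leading-term argument as the paper's lexicographic one.

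One imprecision in your last paragraph deserves attention. You write that multiplication by $\widehat{f_1}\widehat{t_3}\widehat{t_5}$ ``strictly raises'' the $(\widehat{f_1},\widehat{t_3})$-bi-degree. This is not literally true: in $A_\beta^{(4)}S_4^{-1}$ the generators $\widehat{t_3}$ and $\widehat{f_1}$ do \emph{not} $q$-commute, since $\widehat{t_3}\widehat{f_1}=q^{-1}\widehat{f_1}\widehat{t_3}+d_2[1]\widehat{t_2}$ with $\widehat{t_2}=\beta\widehat{t_6}^{-1}\widehat{t_4}^{-1}$. Hence, when you expand $\widehat{f_1}^{i_1}\widehat{t_3}^{i_3}\cdot\widehat{f_1}\widehat{t_3}\widehat{t_5}\cdot(\ldots)$ in the basis $\mathfrak{S}_4$, you obtain the expected term of bi-degree $(i_1+1,i_3+1)$ \emph{plus} a correction term of bi-degree $(i_1,i_3)$. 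This does not break your argument---the correction term is absorbed with the other bi-degree-preserving contributions, so at the maximal bi-degree only the strict-raising part survives---but your ``key point'' as stated overlooks this commutation term. The paper handles this implicitly by inserting the central element $\widehat{\Omega}_1$ between $\widehat{f_1}^{i_1}\widehat{t_3}^{i_3}$ and $\widehat{t_4}^{i_4}\widehat{t_5}^{i_5}\widehat{t_6}^{i_6}$ and tracking all resulting terms.
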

\begin{proof}
Since
$\left( \widehat{f_{1}}^{k_1}\widehat{t_3}^{k_3}
\widehat{t_4}^{k_4}\widehat{t_5}^{k_5}\widehat{t_6}^{k_6}\right)_{(k_1,k_3,\cdots,k_6)\in \mathbb{N}^2\times \mathbb{Z}^3}$ is a basis of ${A_{\beta}^{(4)}S_4^{-1}}$ (Proposition \ref{c4p2}), the set
$\left( f_{1}^{k_1}t_3^{k_3}
t_4^{k_4}t_5^{k_5}t_6^{k_6}\right)_{(k_1,k_3,\cdots,k_6)\in \mathbb{N}^2\times \mathbb{Z}^3}$ spans $\cR_4.$ We show that $\mathcal{B}_4$ is a spanning set of $\cR_4$ by showing that $f_{1}^{k_1}t_3^{k_3}
t_4^{k_4}t_5^{k_5}t_6^{k_6}$ can be written as a finite linear combination of the elements of $\mathcal{B}_4$ for all $(k_1,k_3,\cdots,k_6)\in \mathbb{N}^2\times \mathbb{Z}^3.$ 
  By Proposition \ref{c4p2}, it is sufficient to do this by induction on  $k_1.$ The result is clear when $k_1=0.$ Assume that the statement is true for $k_1\geq 0.$ That is,
$$f_{1}^{k_1}t_3^{k_3}t_4^{k_4}t_5^{k_5}t_6^{k_6}
=\sum_{\underline{i}\in I_1}{a}_{\underline{i}}f_{1}^{i_1}t_4^{i_4}t_5^{i_5}t_6^{i_6}+\sum_{\underline{j}\in I_2}b_{\underline{j}}
t_3^{i_3}t_4^{i_4}t_5^{i_5}t_6^{i_6},$$ 
where $\underline{i}=(i_{1},i_{4},i_{5},i_{6})\in I_1\subset  \mathbb{N}\times\mathbb{Z}^3$ and  $\underline{j}=(i_{3},i_{4},i_{5},i_{6})\in I_2\subset  \mathbb{N}\times\mathbb{Z}^3.$
Note, $a_{\underline{i}}$ and $b_{\underline{j}}$ are all scalars.
It follows that
\begin{align*}
f_{1}^{k_1+1}t_3^{k_3}t_4^{k_4}t_5^{k_5}t_6^{k_6}&=
f_{1}\left( f_{1}^{k_1}t_3^{k_3}t_4^{k_4}t_5^{k_5}t_6^{k_6}\right) 
=\sum_{\underline{i}\in I_1}{a}_{\underline{i}}f_{1}^{i_1+1}t_4^{i_4}t_5^{i_5}t_6^{i_6}+\sum_{\underline{j}\in I_2}b_{\underline{j}}
f_{1}t_3^{i_3}t_4^{i_4}t_5^{i_5}t_6^{i_6}.
\end{align*}
Clearly, the monomial $f_{1}^{i_1+1}t_4^{i_4}t_5^{i_5}t_6^{i_6}\in$ Span($\mathcal{B}_4$). We have to also show 
that $f_{1}t_3^{i_3}t_4^{i_4}t_5^{i_5}t_6^{i_6}\in$ Span($\mathcal{B}_4$) for all $i_3\in \mathbb{N}$ and 
$i_4,i_5,i_6\in\mathbb{Z}.$ This can easily be achieved by an induction on $i_3,$ and the  use of the relation $f_1t_3=\alpha t_5^{-1}-a\beta t_6^{-1}t_4^{-1}.$ 
Therefore, by the principle of mathematical induction, $\mathcal{B}_4$ is a spanning set of $\cR_4$ over $\k.$\\

We prove that $\mathcal{B}_4$ is a linearly independent set. Suppose that
$$\sum_{\underline{i}\in I_1}a_{\underline{i}}f_{1}^{i_1}t_4^{i_4}t_5^{i_5}t_6^{i_6}
+\sum_{\underline{j}\in I_2}b_{\underline{j}}t_3^{i_3}t_4^{i_4}t_5^{i_5}t_6^{i_6}=0.$$ 
 It follows that there exists $\nu\in A_\beta^{(4)}S_4^{-1}$ such that
$$\displaystyle\sum_{\underline{i}\in I_1}a_{\underline{i}}\widehat{f_{1}}^{i_1}\widehat{t_4}^{i_4}
\widehat{t_5}^{i_5}\widehat{t_6}^{i_6}
+\displaystyle\sum_{\underline{j}\in I_2}b_{\underline{j}}\widehat{t_3}^{i_3}\widehat{t_4}^{i_4}
\widehat{t_5}^{i_5}\widehat{t_6}^{i_6}
=\left( \widehat{\Omega}_{1}-\alpha\right) \nu.$$
Write $\nu=\displaystyle\sum_{\underline{l}\in J}c_{\underline{l}}\widehat{f_{1}}^{i_1}\widehat{t_3}^{i_3}\widehat{t_4}^{i_4}
\widehat{t_5}^{i_5}\widehat{t_6}^{i_6},$
where $\underline{l}=(i_1,i_3,i_4,i_5,i_6)\in J\subset\mathbb{N}^2\times\mathbb{Z}^3$ and $c_{\underline{l}}\in \k.$ Note,
 $\widehat{t_2}=\beta\widehat{t_6}^{-1}\widehat{t_4}^{-1}.$  We have that $\widehat{\Omega}_1=
 \widehat{f_{1}}\widehat{t_3}\widehat{t_5}+a\widehat{t_2}\widehat{t_5}=\widehat{f_{1}}\widehat{t_3}\widehat{t_5}+a\beta\widehat{t_6}^{-1}\widehat{t_4}^{-1}\widehat{t_5}.$  Therefore,
\begin{align*}
\displaystyle\sum_{\underline{i}\in I_1}a_{\underline{i}}\widehat{f_{1}}^{i_1}\widehat{t_4}^{i_4}
\widehat{t_5}^{i_5}\widehat{t_6}^{i_6}
+\displaystyle\sum_{\underline{j}\in I_2}b_{\underline{j}}\widehat{t_3}^{i_3}\widehat{t_4}^{i_4}
\widehat{t_5}^{i_5}\widehat{t_6}^{i_6}=&\displaystyle\sum_{\underline{l}\in J}q^\bullet c_{\underline{l}}
\widehat{f_{1}}^{i_1+1}\widehat{t_3}^{i_3+1}
\widehat{t_4}^{i_4}
\widehat{t_5}^{i_5+1}\widehat{t_6}^{i_6}\\
&+\sum_{\underline{l}\in J}q^\bullet\beta ac_{\underline{l}}\widehat{f_{1}}^{i_1}
\widehat{t_3}^{i_3}\widehat{t_4}^{i_4-1}
\widehat{t_5}^{i_5+1}\widehat{t_6}^{i_6-1}\\
&-\displaystyle\sum_{\underline{l}\in J}\alpha c_{\underline{l}}\widehat{f_{1}}^{i_1}\widehat{t_3}^{i_3}
\widehat{t_4}^{i_4}
\widehat{t_5}^{i_5}\widehat{t_6}^{i_6}.
\end{align*}
 Suppose that there exists $(i_1,i_3,i_4,i_5,i_6)\in J$ such that $c_{(i_1,i_3,i_4,i_5,i_6)}\neq 0.$\newline Let $(w_1,w_3,w_4,w_5,w_6)\in J$ be the greatest element (in the lexicographic order on 
$\mathbb{N}^2\times \mathbb{Z}^3$) of  $J$ such that $c_{(w_1,w_3,w_4,w_5,w_6)}\neq 0.$ Since
   $\left( \widehat{f_{1}}^{k_1}\widehat{t_3}^{k_3}
\widehat{t_4}^{k_4}\widehat{t_5}^{k_5}\widehat{t_6}^{k_6}\right) _{(k_1,k_3,\cdots,k_6)\in \mathbb{N}^2\times \mathbb{Z}^3}$ is a basis of  $A^{(4)}S_4^{-1},$ it implies that the coefficients of  $\widehat{f_{1}}^{w_1+1}\widehat{t_3}^{w_3+1}\widehat{t_4}^{w_4}\widehat{t_5}^{w_5+1}
  \widehat{t_6}^{w_6}$ in the above equality can be identified as: $q^\bullet c_{(w_1,w_3,w_4,w_5,w_6)}=0$. Hence, $c_{(w_1,w_3,w_4,w_5,w_6)}=0,$ a contradiction! Therefore, $c_{(i_1,i_3,i_4,i_5,i_6)}= 0$ for all $(i_1,i_3,i_4,i_5,i_6)\in J.$  This further implies that
  $$\displaystyle\sum_{\underline{i}\in I_1}a_{ \underline{i}}\widehat{f_{1}}^{i_1}\widehat{t_4}^{i_4}
\widehat{t_5}^{i_5}\widehat{t_6}^{i_6}
+\displaystyle\sum_{\underline{j}\in I_2}b_{\underline{j}}\widehat{t_3}^{i_3}\widehat{t_4}^{i_4}
\widehat{t_5}^{i_5}\widehat{t_6}^{i_6}
=0.$$
It follows from the previous proposition that  $a_{\underline{i}}$ and $ b_{\underline{j}}$ are all zero.   In conclusion, $\mathcal{B}_4$ is a linearly independent set. 
  \end{proof}

\textbf{Basis for $\cR_5.$}  We will identify $\cR_5$ with ${A_{\alpha}^{(5)}S_5^{-1}}/{\langle \widehat{\Omega}_{2}-\beta\rangle},$  where  $A_\alpha^{(5)}S_5^{-1}$  $=\dfrac{{A^{(5)}S_5^{-1}}}{{\langle \Omega_{1}-\alpha \rangle}}.$ Note, the canonical images of $E_{i,j}$ (resp. $T_i$) in $A_{\alpha}^{(5)}$ will be denoted by $\widehat{e_{i,j}}$ (resp. $\widehat{t_i}$).
 We now find a basis for $A_\alpha^{(5)}S_5^{-1}.$ Recall that  $\Omega_{1}=\zbar_1T_3T_5+a \zbar_2T_5$ and $\Omega_2=\zbar_2T_4T_6+bT_3^3T_6$ in $A^{(5)}$ (remember, $\zbar_1:=E_{1,5}$ and $\zbar_2:=E_{2,5}$). Since $z_2t_4t_6+bt_3^3t_6=\beta$  and $\widehat{z_1}\widehat{t_3}\widehat{t_5}+a\widehat{z_2}\widehat{t_5}=\alpha$ in $\cR_5$ and  $A_\alpha^{(5)}S_5^{-1}$ respectively, we have the relation $\widehat{z_{2}}=
 \dfrac{1}{a}\left( \alpha\widehat{t_5}^{-1}-\widehat{z_{1}}\widehat{t_3}\right) $ in $A_\alpha^{(5)}S_5^{-1}$ and, in $\cR_5,$ we have the following two relations:  
 \begin{align}
 z_{2}&=\dfrac{1}{a}\left( \alpha t_5^{-1}-z_{1}t_3\right). \label{de2}
 \\
 t_3^3&=\dfrac{1}{b}\left( \beta t_6^{-1}-z_{2}t_4\right) =\dfrac{\beta}{b} t_6^{-1}-\dfrac{q^3\alpha}{ab}t_4t_5^{-1}+\dfrac{1}{ab} z_{1}t_3t_4.\label{de3}
 \end{align}
 \begin{proposition}
 \label{c4p3}
The set $\mathfrak{S}_5=\left\lbrace \widehat{z_{1}}^{i_1}\widehat{t_3}^{i_3}\widehat{t_4}^{i_4}\widehat{t_5
}^{i_5}\widehat{t_6}^{i_6}\mid {(i_1,i_3,\cdots,i_6)\in \mathbb{N}^3\times \mathbb{Z}^2}\right\rbrace $ is a $\k-$basis of $A_\alpha^{(5)}S_5^{-1},$ where $\alpha\in \k.$
\end{proposition}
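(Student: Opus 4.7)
The argument will follow the same template as the proof of Proposition~\ref{c4p2}, with the element $\widehat{z_2}$ and the relation $\widehat{z_2} = a^{-1}\bigl(\alpha\widehat{t_5}^{-1} - \widehat{z_1}\widehat{t_3}\bigr)$ coming from $\widehat{\Omega}_1 = \alpha$ playing the role previously played by $\widehat{t_2}$ and the relation $\widehat{t_2} = \beta\widehat{t_6}^{-1}\widehat{t_4}^{-1}$. By Cauchon's theorem $A^{(5)}$ is an iterated Ore extension $\k[\zbar_1][\zbar_2;\sigma_2,\delta_2][T_3;\sigma_3,\delta_3][T_4;\sigma_4,\delta_4][T_5;\tau_5][T_6;\tau_6]$, so the family $(\zbar_1^{i_1}\zbar_2^{i_2}T_3^{i_3}T_4^{i_4}T_5^{i_5}T_6^{i_6})$ with $i_1,\dots,i_4\in\N$ and $i_5,i_6\in\Z$ is a $\k$-basis of $A^{(5)}S_5^{-1}$. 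Consequently the corresponding monomials in the hatted generators span $A_\alpha^{(5)}S_5^{-1}$.

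For the spanning part, I would proceed by induction on the $\widehat{z_2}$-exponent $i_2$. The base case $i_2=0$ is trivial; for $i_2\geq 1$ one factor of $\widehat{z_2}$ is substituted via the relation above, and the resulting $\alpha\widehat{t_5}^{-1}$ and $-\widehat{z_1}\widehat{t_3}$ pieces are pushed back to standard PBW order using the commutation relations inherited from $A^{(5)}$. None of these rewriting steps can increase the $\widehat{z_2}$-degree, so each such monomial is expressed as a linear combination of monomials of strictly smaller $\widehat{z_2}$-degree, to which the inductive hypothesis applies.

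For linear independence, assume $\sum_{\underline{i}\in I}a_{\underline{i}}\widehat{z_1}^{i_1}\widehat{t_3}^{i_3}\widehat{t_4}^{i_4}\widehat{t_5}^{i_5}\widehat{t_6}^{i_6}=0$ in $A_\alpha^{(5)}S_5^{-1}$. Lifting back to $A^{(5)}S_5^{-1}$, there exists $\nu$ such that
$$
\sum_{\underline{i}\in I}a_{\underline{i}}\zbar_1^{i_1}T_3^{i_3}T_4^{i_4}T_5^{i_5}T_6^{i_6}=(\Omega_1-\alpha)\nu.
$$
Expand $\nu=\sum_{\underline{j}\in J}b_{\underline{j}}\zbar_1^{j_1}\zbar_2^{j_2}T_3^{j_3}T_4^{j_4}T_5^{j_5}T_6^{j_6}$ in the PBW basis and use $\Omega_1=\zbar_1T_3T_5+a\zbar_2T_5$. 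Introduce a total order $<_2$ on $\N^3\times\Z^3$ that sorts by $j_2$ as the primary criterion (analogous to the order $<_2$ in the proof of Proposition~\ref{c4p2}), and let $(w_1,w_2,w_3,w_4,w_5,w_6)$ be the $<_2$-maximum index with $b_{\underline{j}}\neq 0$. After PBW normalization, the summand $a\zbar_2T_5\cdot b_{\underline{j}}\zbar_1^{j_1}\zbar_2^{j_2}T_3^{j_3}T_4^{j_4}T_5^{j_5}T_6^{j_6}$ produces monomials of $\zbar_2$-degree $j_2+1$, while the summands coming from $\zbar_1T_3T_5\cdot\nu$ and $-\alpha\nu$ contribute only to $\zbar_2$-degree $\leq j_2$. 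Identifying the coefficient of $\zbar_1^{w_1}\zbar_2^{w_2+1}T_3^{w_3}T_4^{w_4}T_5^{w_5+1}T_6^{w_6}$ (which cannot come from the left-hand side since that side contains no $\zbar_2$) then forces $aq^{\bullet}b_{(w_1,\dots,w_6)}=0$, contradicting the choice of $\underline{w}$. Hence $\nu=0$ and consequently all $a_{\underline{i}}=0$.

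The main obstacle is verifying that the PBW-normalization of products $\zbar_1T_3T_5\cdot\zbar_1^{j_1}\zbar_2^{j_2}T_3^{j_3}T_4^{j_4}T_5^{j_5}T_6^{j_6}$ in $A^{(5)}$ never raises the $\zbar_2$-degree beyond $j_2$, so that the $a\zbar_2T_5$-summand is unambiguously the unique source of the maximal $\zbar_2$-degree term. This is where the Ore extension structure from Cauchon's theorem must be invoked: the derivations $\delta_2,\delta_3$ arising in the presentation of $A^{(5)}$ have the property that $\delta_k(\zbar_1)$ has $\zbar_2$-degree strictly less than~$1$, so that rearranging $\zbar_1T_3$ past $\zbar_2^{j_2}$ cannot contribute to $\zbar_2$-degree $j_2+1$. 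Once this bookkeeping is settled, the proof is a verbatim adaptation of the leading-term extraction carried out in Proposition~\ref{c4p2}.
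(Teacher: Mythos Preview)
Your overall strategy is correct and matches the paper's template (the paper explicitly says the proof is ``similar to that of Proposition~\ref{c4p2}''), but the justification you give in the final paragraph contains an error. You assert that ``$\delta_k(\zbar_1)$ has $\zbar_2$-degree strictly less than~$1$'', which is false: in the iterated Ore extension presentation of $A^{(5)}$ one has $\delta_3(\zbar_1)=-(q+q^{-1}+q^{-3})\zbar_2$, so commuting $T_3$ leftward past $\zbar_1^{j_1}$ \emph{does} produce $\zbar_2$-terms. Hence the summand $\zbar_1T_3T_5\cdot\nu$, computed by naive left-multiplication, contributes to $\zbar_2$-degree $j_2+1$ as well, and your leading-term isolation is not justified as written.

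The clean fix --- and what the paper implicitly does in the analogous Proposition~\ref{c4p1} --- is to exploit the centrality of $\Omega_1$ to insert it \emph{between} $\zbar_2^{j_2}$ and $T_3^{j_3}$:
\[
(\Omega_1-\alpha)\,\zbar_1^{j_1}\zbar_2^{j_2}T_3^{j_3}T_4^{j_4}T_5^{j_5}T_6^{j_6}
=\zbar_1^{j_1}\zbar_2^{j_2}(\Omega_1-\alpha)T_3^{j_3}T_4^{j_4}T_5^{j_5}T_6^{j_6}.
\]
Now $\zbar_1^{j_1}\zbar_2^{j_2}\zbar_1T_3T_5=q^{-3j_2}\zbar_1^{j_1+1}\zbar_2^{j_2}T_3T_5$ (since $\zbar_2\zbar_1=q^{-3}\zbar_1\zbar_2$) and $\zbar_1^{j_1}\zbar_2^{j_2}\cdot a\zbar_2T_5=a\zbar_1^{j_1}\zbar_2^{j_2+1}T_5$; pushing $T_3$ and $T_5$ rightward past $T_3^{j_3}T_4^{j_4}$ involves only $q$-commutation in $A^{(5)}$ (since $\delta_4(T_3)=0$ and $T_5,T_6$ are adjoined with automorphisms only). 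This yields exactly three PBW monomials, of which only the $a\zbar_2T_5$-term has $\zbar_2$-degree $j_2+1$, with nonzero coefficient $q^{\bullet}a$. Your leading-term argument then goes through verbatim. With this correction in place, your proof is the paper's intended one.
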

\begin{proof}
The proof is similar to that of Proposition \ref{c4p2} and so is left to the reader. Details can be found in \cite{io-thesis}. 
\end{proof}

\begin{proposition} 
\label{c4p20}
The set $\mathcal{B}_5=\left\lbrace 
z_{1}^{i_1}t_3^{\xi}t_4^{i_4}t_5^{i_5}t_6^{i_6}
\mid (\xi, i_1,i_4,i_5,i_6)\in \{0,1,2\}\times \mathbb{N}^2\times\mathbb{Z}^2\right\rbrace $ is a $\k-$basis of $\cR_5$.
\end{proposition}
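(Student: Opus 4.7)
The plan is to mirror the strategy used in the proof of Proposition \ref{c4p1}, now with the cubic relation \eqref{de3} playing the role of \eqref{de1} and with Proposition \ref{c4p3} replacing Proposition \ref{c4p2}. Throughout, I will identify $\cR_5$ with $A_\alpha^{(5)}S_5^{-1}/\langle \widehat{\Omega}_{2}-\beta\rangle$ and use freely that in $\cR_5$ we have $t_3^3=\frac{\beta}{b}t_6^{-1}-\frac{q^3\alpha}{ab}t_4t_5^{-1}+\frac{1}{ab}z_1 t_3 t_4$.

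For spanning, Proposition \ref{c4p3} implies that the family $\left(z_1^{i_1}t_3^{i_3}t_4^{i_4}t_5^{i_5}t_6^{i_6}\right)$ with $(i_1,i_3)\in\mathbb{N}^2$ and $(i_4,i_5,i_6)\in\mathbb{Z}^3$ already spans $\cR_5$. I would then argue by induction on $i_3$ that each such monomial lies in $\mathrm{Span}(\mathcal{B}_5)$. The cases $i_3\in\{0,1,2\}$ are immediate. For $i_3\geq 3$, write $t_3^{i_3}=t_3^{i_3-3}\cdot t_3^3$ and substitute \eqref{de3}. After commuting the resulting factors $t_4,t_5,t_6$ past $t_3^{i_3-3}$ (which is merely a scalar rescaling) and commuting $z_1$ past $t_3^{i_3-3}$ using the $A^{(5)}$-relations, all three contributions become linear combinations of monomials whose $t_3$-exponent is strictly less than $i_3$ (at most $i_3-3$ for the first two summands, and at most $i_3-2$ for the $z_1t_3 t_4$-summand). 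The induction hypothesis finishes the job.

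For linear independence, suppose $\sum_{(\xi,\underline{v})\in I} a_{(\xi,\underline{v})}\, z_1^{i_1}t_3^{\xi}t_4^{i_4}t_5^{i_5}t_6^{i_6}=0$ in $\cR_5$. Lifting to $A_\alpha^{(5)}S_5^{-1}$, this sum equals $(\widehat{\Omega}_{2}-\beta)\nu$ for some $\nu\in A_\alpha^{(5)}S_5^{-1}$, which by Proposition \ref{c4p3} I would expand as $\nu=\sum_{\underline{l}\in J}c_{\underline{l}}\,\widehat{z_{1}}^{i_1}\widehat{t_3}^{i_3}\widehat{t_4}^{i_4}\widehat{t_5}^{i_5}\widehat{t_6}^{i_6}$. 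Using $\widehat{z_2}=\frac{1}{a}(\alpha\widehat{t_5}^{-1}-\widehat{z_1}\widehat{t_3})$, one rewrites
\[
\widehat{\Omega}_{2}=\tfrac{\alpha}{a}\widehat{t_5}^{-1}\widehat{t_4}\widehat{t_6}-\tfrac{1}{a}\widehat{z_1}\widehat{t_3}\widehat{t_4}\widehat{t_6}+b\widehat{t_3}^3\widehat{t_6}.
\]
Among the four resulting contributions in $(\widehat{\Omega}_{2}-\beta)\nu$, only the term $b\widehat{t_3}^3\widehat{t_6}\cdot\nu$ raises the $\widehat{t_3}$-exponent of a basis monomial in $\mathfrak{S}_5$ by $3$; each summand produces a leading piece of the form $bq^{\bullet}c_{\underline{l}}\,\widehat{z_1}^{i_1}\widehat{t_3}^{i_3+3}\widehat{t_4}^{i_4}\widehat{t_5}^{i_5}\widehat{t_6}^{i_6+1}$. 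Ordering $\mathbb{N}^2\times\mathbb{Z}^3$ lexicographically with the $i_3$-entry first, let $\underline{l}=(w_1,w_3,w_4,w_5,w_6)$ be the maximal index with $c_{\underline{l}}\neq 0$; identifying coefficients of $\widehat{z_1}^{w_1}\widehat{t_3}^{w_3+3}\widehat{t_4}^{w_4}\widehat{t_5}^{w_5}\widehat{t_6}^{w_6+1}$ (a monomial that cannot appear on the left-hand side, since its $\widehat{t_3}$-degree is at least $3$) forces $bc_{\underline{l}}=0$, contradicting $b\neq 0$ and the choice of $\underline{l}$. Hence $\nu=0$, and Proposition \ref{c4p3} applied to the remaining equality gives $a_{(\xi,\underline{v})}=0$ for every $(\xi,\underline{v})\in I$.

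The main obstacle will be ensuring that the leading-term extraction is truly unambiguous: the skew-commutations between $\widehat{z_1}$ and $\widehat{t_3}$ in $A^{(5)}$ may introduce extra terms of strictly lower $\widehat{t_3}$-degree, and the term $-\frac{1}{a}\widehat{z_1}\widehat{t_3}\widehat{t_4}\widehat{t_6}\cdot\nu$ also contributes to high $\widehat{t_3}$-degrees. Prioritising the $i_3$-coordinate in the lexicographic order guarantees that any alternate route to producing $\widehat{t_3}^{w_3+3}$ would demand a summand of $\nu$ with $t_3$-exponent strictly exceeding $w_3$, which is ruled out by maximality. Subject to that bookkeeping, the argument is structurally identical to Proposition \ref{c4p1} and should go through cleanly.
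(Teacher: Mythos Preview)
Your proposal is correct and follows exactly the approach the paper intends: the paper's own proof simply states that it is similar to Proposition~\ref{c4p1} and defers details to \cite{io-thesis}, and you have faithfully transported that argument, replacing relation~\eqref{de1} by the cubic relation~\eqref{de3} and Proposition~\ref{c4p2} by Proposition~\ref{c4p3}. Two cosmetic slips to fix: the spanning family coming from $\mathfrak{S}_5$ has $(i_1,i_3,i_4)\in\mathbb{N}^3$ and $(i_5,i_6)\in\mathbb{Z}^2$ (not $i_4\in\mathbb{Z}$), and correspondingly your lexicographic order should be on $\mathbb{N}^3\times\mathbb{Z}^2$; also, in the spanning step you should mention that the $\widehat{z_2}$-terms produced when commuting $z_1$ past $t_3^{i_3-3}$ are eliminated via~\eqref{de2} before invoking the induction hypothesis.
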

\begin{proof}
The proof is similar to that of Proposition \ref{c4p1} and so is left to the reader. Details can be found in \cite{io-thesis}. 
\end{proof}

We note for future reference the following immediate corollary.

\begin{corollary}
\label{c4c1}
Let $I$ be a finite subset of $\{0,1,2\}\times \mathbb{N}\times\mathbb{Z}^3$ and $(a_{(\xi,\underline{i})})_{\underline{i}\in I}$ be a family of scalars.  If $$\sum_{(\xi,\underline{i})\in I}a_{(\xi.\underline{i})} z_{1}^{i_1}t_3^\xi t_4^{i_4}
t_5^{i_5}t_6^{i_6}=0, 
$$ then $a_{(\xi,\underline{i})}=0$ for all $(\xi, \underline{i})\in I.$
\end{corollary}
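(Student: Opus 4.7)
The plan is to deduce this linear-independence statement directly from Proposition \ref{c4p20}: the only new feature in the corollary is that the exponent $i_4$ is allowed to be negative, so the natural strategy is to clear the negative powers of $t_4$ by a right-multiplication and then invoke the basis $\mathcal{B}_5$ of $\cR_5$.

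More precisely, starting from a finite vanishing sum $\sum_{(\xi,\underline{i})\in I} a_{(\xi,\underline{i})}\, z_1^{i_1} t_3^\xi t_4^{i_4} t_5^{i_5} t_6^{i_6} = 0$, I would choose $N \in \mathbb{N}$ with $i_4 + N \geq 0$ for every index in $I$ and right-multiply the relation by $t_4^N$. The relevant commutation relations in $\cR_5$ are $t_4 t_5 = q^3 t_5 t_4$ and $t_4 t_6 = q^3 t_6 t_4$; these follow from the DDA presentation of $A^{(5)}$, in which $T_5$ and $T_6$ are purified generators and $q$-commute with $T_4$ according to the matrix $M$ of Section 2.3 (entries $\mu_{45}=\mu_{46}=3$). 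Pushing $t_4^N$ past $t_5^{i_5} t_6^{i_6}$ produces a scalar $q^{-3N(i_5+i_6)}$ in each term and rewrites the equation as
\[
\sum_{(\xi,\underline{i})\in I} q^{-3N(i_5+i_6)}\, a_{(\xi,\underline{i})}\, z_1^{i_1} t_3^\xi t_4^{i_4+N} t_5^{i_5} t_6^{i_6} = 0,
\]
where every monomial now lives in the basis $\mathcal{B}_5$ of $\cR_5$ given by Proposition \ref{c4p20}.

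Linear independence of $\mathcal{B}_5$ then forces $q^{-3N(i_5+i_6)} a_{(\xi,\underline{i})} = 0$ for every $(\xi,\underline{i}) \in I$, and since $q$ is not a root of unity each scalar $q^{-3N(i_5+i_6)}$ is nonzero; hence $a_{(\xi,\underline{i})} = 0$, as required. There is no genuine obstacle here: this is the standard clear-the-denominators argument, entirely analogous to the way Corollary \ref{c4c2} was extracted from Proposition \ref{c3p5}. The only bookkeeping to verify is that the $q$-commutation constants between $t_4$ and $t_5, t_6$ are indeed $3$, a fact read off directly from the matrix $M$.
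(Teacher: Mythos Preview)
Your argument is correct and is exactly the standard clear-denominators argument the paper has in mind; indeed the paper presents Corollary~\ref{c4c1} as an ``immediate'' consequence of Proposition~\ref{c4p20} without proof, and the only thing to check is precisely what you verified, namely that $t_4$ $q$-commutes with $t_5$ and $t_6$ in $\cR_5$ (which follows because $T_5,T_6$ are adjoined in $A^{(5)}$ with trivial derivations, with the relevant entries of $M$ being $\mu_{4,5}=\mu_{4,6}=3$).
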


\begin{remark}
We were not successful in finding a basis for $\cR_6.$ However, this has no effect on our main results in this section. Since $\cR_7=A_{\alpha,\beta},$ we already have a basis for $\cR_7$ (Proposition \ref{c3p5}).
\end{remark}


\subsection{Derivations of $A_{\alpha,\beta}$}
We are now going to study the derivations of $A_{\alpha,\beta}.$ We will only treat the case when  both $\alpha$ and $\beta$ are non-zero, and mention results when either $\alpha$ or $\beta$ is zero without details. 

 Throughout this subsection, we assume that $\alpha$ and  $\beta$ are non-zero. Let $\der(A_{\alpha,\beta})$ denote the $\k-$derivations of $A_{\alpha,\beta}$ and $D\in \der(A_{\alpha,\beta}).$ Via localization, $D$ extends uniquely to a derivation of each of the series of algebras in \eqref{c4ee1}. Therefore, $D$ extends to  a derivation of the  quantum torus $\cR_3=\k_{q^N}[t_3^{\pm1},t_4^{\pm1},t_5^{\pm1},t_6^{\pm 1}].$ It follows from \cite[Corollary 2.3]{op} that $D$ can uniquely be written as: $$D=\text{ad}_x+\delta,$$ where $x\in \cR_3,$ and $\delta$ is a scalar derivation of $\cR_3$ defined as $\delta(t_i)=\lambda_it_i$ for each  $i=3,4,5,6.$ Note, $\lambda_i\in Z(\cR_3)=\k.$ Also, $\text{ad}_x$ is an inner derivation of $\cR_3$ defined as $\text{ad}_x(L)=xL-Lx$ for all $L\in \cR_3.$ 

We aim to describe $D$ as a derivation of $A_{\alpha,\beta}=\cR_7.$ We do this in several steps. 

Before starting the process we note the following relations that will be used in this section. They all follow from \cite[Lemme 5.3.2]{ca}.

\begin{remark}
\label{rrr}
Recall the notations:
\begin{align*}
f_1:&=e_{1,4} & F_1:&=E_{1,4}\\
z_1:&=e_{1,5} & \zbar_1:&=E_{1,5} \\
z_2:&=e_{2,5} & \zbar_2:&=E_{2,5}.
\end{align*}

Then 
\begin{align*}
f_1&=t_1-at_2t_3^{-1} & e_{3,6}&=t_3-at_4t_5^{-1}\\
z_1&=f_1-st_3^2t_4^{-1} &e_1&=e_{1,6}-rt_5t_6^{-1}\\
z_2&=t_2-bt_3^3t_4^{-1}& e_3&=e_{3,6}-st_5^2t_6^{-1}\\
e_{1,6}&=z_1-he_{3,6}t_5^{-1}-gt_4t_5^{-2}& e_4&=t_4-bt_5^3t_6^{-1}.
\end{align*}
\end{remark}

We first describe $D$ as a derivation of $\cR_4.$ 
\begin{lemma}
\begin{itemize}\label{c4l1}
\item[1.] $x\in \cR_4.$
\item[2.] $\lambda_5=\lambda_4+\lambda_6,$ $\delta(f_{1})=-(\lambda_3+\lambda_5)f_{1}$  
and $\delta(t_2)=-\lambda_5t_2.$ 
\item[3.] Set $\lambda_1:=-(\lambda_3+\lambda_5)$ and $\lambda_2:=-\lambda_5.$ Then, $D(e_{\kappa,4})=\text{ad}_x(e_{\kappa,4})+\lambda_\kappa e_{\kappa,4}$ for  all $\kappa\in \{1,\cdots, 6\}.$
\end{itemize}
\end{lemma}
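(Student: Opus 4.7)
The plan is to leverage the decomposition $D=\mathrm{ad}_x+\delta$ on $\cR_3$ together with the requirement $D(\cR_4)\subseteq\cR_4$. A natural tool is the $\mathbb{Z}$-grading on $\cR_3$ defined by $\deg(t_3)=1$ and $\deg(t_4)=\deg(t_5)=\deg(t_6)=0$, under which $f_1$ is homogeneous of $t_3$-degree $-1$ and the key relation \eqref{de1} is homogeneous. Using this grading, the basis of $\cR_4$ supplied by Proposition \ref{c4p1} makes transparent which graded pieces of $\cR_3$ actually belong to $\cR_4$: in degree $k\geq 0$ one recovers the full degree-$k$ piece of $\cR_3$, while in degree $k<0$ one obtains only the proper subspace $f_1^{-k}\cdot\k[t_4^{\pm1},t_5^{\pm1},t_6^{\pm1}]$ (with $q$-twisted multiplication).

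I would first derive (2) assuming (1). Applying $\delta$ to $t_2t_4t_6=\beta$ (valid in $\cR_3$ since $t_2=\beta t_6^{-1}t_4^{-1}$) immediately yields $\delta(t_2)=-(\lambda_4+\lambda_6)t_2$. Applying $\delta$ to \eqref{de1} and using $\delta(t_3)=\lambda_3 t_3$ gives, after rearrangement,
\[
\delta(f_1)\,t_3 \;=\; -(\lambda_3+\lambda_5)\,\alpha\, t_5^{-1} + (\lambda_3+\lambda_4+\lambda_6)\,a\beta\, t_6^{-1}t_4^{-1}.
\]
Granting (1), the element $\delta(f_1)=D(f_1)-[x,f_1]$ lies in $\cR_4$; being homogeneous of $t_3$-degree $-1$, Proposition \ref{c4p1} forces it to be a $\k$-combination of the $f_1 t_4^{i_4}t_5^{i_5}t_6^{i_6}$, and a support comparison against the two monomials appearing in $\delta(f_1)$ leaves only the $(i_4,i_5,i_6)=(0,0,0)$ term, so $\delta(f_1)=\mu f_1$ for some $\mu\in\k$. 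Matching the two coefficients in $\delta(f_1)\,t_3=\mu f_1 t_3$ then yields $\mu=-(\lambda_3+\lambda_5)=-(\lambda_3+\lambda_4+\lambda_6)$; hence $\lambda_5=\lambda_4+\lambda_6$ and $\delta(f_1)=-(\lambda_3+\lambda_5)f_1$, which is consistent with the identity $\delta(t_2)=-\lambda_5 t_2$ above.

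The main obstacle is (1). My strategy is to decompose $x=\sum_{k\in\mathbb{Z}}x_k$ according to $t_3$-degree. The non-negative part $x_{\geq 0}:=\sum_{k\geq 0}x_k$ automatically lies in $\cR_4$, so replacing $D$ by $D-\mathrm{ad}_{x_{\geq 0}}$ I may assume $x=\sum_{k<0}x_k$. The condition $[x,t_3]=D(t_3)-\lambda_3 t_3\in\cR_4$ splits by degree into $[x_{k-1},t_3]\in\cR_4$ for every $k$; since commutation with $t_3$ only rescales each monomial by a $q$-factor depending on its $(v_4,v_5)$-exponents, the containment in the strict subspace described above translates into explicit linear conditions on the coefficients of $x_{k-1}$, obtained by expanding $f_1^{-k}t_4^{i_4}t_5^{i_5}t_6^{i_6}$ via \eqref{de1} in the basis of $\cR_3$. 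Combining this with the parallel constraint $[x,f_1]\in\cR_4-\delta(f_1)$ (whose right-hand side is a $\k$-multiple of $f_1$ by the analysis above) should pin down each $x_k$ ($k<0$) as lying in the $f_1^{-k}$-component of $\cR_4$, so that $x\in\cR_4$ up to an element of $Z(\cR_3)=\k$ (Lemma \ref{ev25}), which is absorbed by the non-uniqueness of $x$. The tedious part, and the genuine bottleneck, will be the explicit tracking of $q$-factors required to expand $f_1^{-k}t_4^{i_4}t_5^{i_5}t_6^{i_6}$ in the monomial basis of $\cR_3$ and to read off which coefficients of $x_k$ are forced to vanish.

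Part (3) is then immediate. With $\lambda_1:=-(\lambda_3+\lambda_5)$ and $\lambda_2:=-\lambda_5$, part (2) gives $\delta(e_{\kappa,4})=\lambda_\kappa e_{\kappa,4}$ for $\kappa\in\{1,2\}$, while $\delta(e_{\kappa,4})=\lambda_\kappa e_{\kappa,4}$ holds by the very definition of $\delta$ for $\kappa\in\{3,4,5,6\}$. Consequently $D(e_{\kappa,4})=[x,e_{\kappa,4}]+\delta(e_{\kappa,4})=\mathrm{ad}_x(e_{\kappa,4})+\lambda_\kappa e_{\kappa,4}$ for every $\kappa\in\{1,\dots,6\}$.
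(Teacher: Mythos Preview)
Your treatment of (2) and (3) is essentially the paper's, but there is a circularity in your outline: your sketch of (1) appeals to ``$\delta(f_1)$ is a $\k$-multiple of $f_1$,'' which you established in (2) \emph{assuming} (1). The repair is easy---work in (1) with the raw identity $\delta(f_1)\,t_3=-(\lambda_3+\lambda_5)\alpha t_5^{-1}+(\lambda_3+\lambda_4+\lambda_6)a\beta t_6^{-1}t_4^{-1}$, which needs nothing about $x$---but as written the dependence is circular, and the paper avoids it by proving (1) first.

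The substantive gap is your strategy for (1). The constraint $[x,t_3]\in\cR_4$ kills no information on monomials $t_4^{b}t_5^{-3b}t_6^{d}t_3^{s}$, and bringing in $[x,f_1]$ couples each $t_3$-degree to two others (since $f_1$ has two monomial summands), so your ``explicit linear conditions'' do not obviously close up; you acknowledge this as the bottleneck but give no mechanism to resolve it. The paper sidesteps the whole issue with one idea you are missing: choose $z:=t_4t_5^{-1}t_6$, which is \emph{central} in $\mathcal{Q}_q:=\k_{q^\mu}[t_4^{\pm1},t_5^{\pm1},t_6^{\pm1}]$ and satisfies $zt_3=q^{-2}t_3z$. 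Writing $x=\sum_s y_st_3^s$ with $y_s\in\mathcal{Q}_q$, centrality of $z$ gives
\[
\nu_j:=\mathrm{ad}_{x_-}(z^j)\,z^{-j}=\sum_{s=-1}^{-n}(1-q^{-2js})\,y_st_3^s\in\cR_4\qquad(j=1,\dots,n),
\]
since $D(z^j),\ \mathrm{ad}_{x_+}(z^j),\ \delta(z^j)\in\cR_4$. The coefficient matrix $(1-q^{-2js})$ is Vandermonde-type and invertible (as $q$ is not a root of unity), so each $y_st_3^s$ is a $\k$-linear combination of the $\nu_j$ and hence lies in $\cR_4$. This yields $x_-\in\cR_4$ with no degree-by-degree divisibility analysis in $\mathcal{Q}_q$ at all.
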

\begin{proof}
1. Set $\mathcal{Q}_q:=\k_{q^\mu}[t_4^{\pm 1},t_5^{\pm 1},t_6^{\pm 1}],$ where $\mu$ is the  skew-symmetric sub-matrix of $N$ (see Section \ref{c3p3}) obtained by deleting the first row and first column of $N$. Observe that $\mathcal{Q}_q$ is a subalgebra of both $\cR_3$ and $\cR_4$ with central element $$z:=t_4t_5^{-1}t_6.$$ Furthermore, since $\cR_3$ is a quantum torus, we can present it as a free left $\mathcal{Q}_q-$module with basis $(t_3^s)_{s\in\mathbb{Z}}.$ With this presentation, $x\in \cR_3$ can be written as 
$$x=\sum_{s\in \mathbb{Z}}y_st_3^s,$$ where $y_s\in \mathcal{Q}_q.$ 
Set $$x_{+}:=\sum_{s\geq 0}y_st_3^s \ \ \text{and} \ \ x_{-}:=\sum_{s<0}y_st_3^s.$$ Clearly, $x=x_{+}+x_{-}.$ Obviously, $x_+\in \cR_4,$ hence we aim to also show that $x_-$ belongs to $\cR_4$ by following a pattern similar to \cite[Proposition 7.1.2]{ak}. 
As $D$ is a derivation of $\cR_4$, we have that 
$D(z^j)\in \cR_4$ for all $j\in \mathbb{N}_{\geq 1}.$ Now $D(z^j)=\text{ad}_{x}(z^j)+\delta(z^j)=\text{ad}_{x_+}(z^j)+\text{ad}_{x_-}(z^j)+\delta(z^j).$
Observe that $\text{ad}_{x_+}(z^j)\in \cR_4$; since $x_+, z^j \in \cR_4.$ 
 Also, $\delta(z)=\delta(t_4t_5^{-1}t_6)=(\lambda_4-\lambda_5+\lambda_6)t_4t_5^{-1}t_6=(\lambda_4-\lambda_5+\lambda_6)z,$ where 
 $\lambda_4,\lambda_5,\lambda_6\in \k.$
It follows that $\delta(z^j)=j(\lambda_4-\lambda_5+\lambda_6)z^j\in \cR_4.$ 
We can therefore conclude that each $\text{ad}_{x_-}(z^j)$ belongs to $\cR_4$ since $D(z^j), \text{ad}_{x_+}(z^j), \delta(z^j)\in \cR_4.$
We have:
$$\text{ad}_{x_-}(z^j)=x_-z^j-z^jx_-=\sum_{s=-1}^{-n}y_st_3^sz^j-\sum_{s=-1}^{-n}y_sz^jt_3^s.$$
One can verify that $zt_3=q^{-2}t_3z.$
Therefore, $$\text{ad}_{x_-}(z^j)=\sum_{s=-1}^{-n}(1-q^{-2js})y_st_3^sz^j, \ \text{hence,} \ \ \text{ad}_{x_-}(z^j)z^{-j}=\sum_{s=-1}^{-n}(1-q^{-2js})y_st_3^s.$$
Set $\nu_j:=\text{ad}_{x_-}(z^j)z^{-j}\in \cR_4.$  It follows that 
$$\nu_j=\sum_{s=-1}^{-n}(1-q^{-2js})y_st_3^s,$$ for each $j\in \{1,\cdots,n\}.$ One can therefore write the above equality as a matrix equation as follows:
$$
\begin{bmatrix}
(1-q^2)&(1-q^4)&(1-q^6)&\cdots &(1-q^{2n})\\
(1-q^4)&(1-q^8)&(1-q^{12})&\cdots &(1-q^{4n})\\
(1-q^6)&(1-q^{12})&(1-q^{18})&\cdots &(1-q^{6n})\\
\vdots&\vdots&\vdots&\ddots&\vdots\\
(1-q^{2n})&(1-q^{4n})&(1-q^{6n})&\cdots &(1-q^{2n^2})\\
\end{bmatrix}
\begin{bmatrix}
y_{-1}t_3^{-1}\\
y_{-2}t_3^{-2}\\
y_{-3}t_3^{-3}\\
\vdots\\
y_{-n}t_3^{-n}\\
\end{bmatrix}=
\begin{bmatrix}
\nu_1\\
\nu_2\\
\nu_3\\
\vdots\\
\nu_n\\
\end{bmatrix}.
$$
We already know that each $\nu_j$ belongs to $\cR_4.$ 
We want to show that $y_st_3^s$ also belongs to $\cR_4$ for each $s\in\{-1,\cdots, -n\}.$ 
To establish this, it is sufficient to show that the coefficient matrix of the above  matrix equation is invertible.
Let $U$ represent this matrix. Thus,
$$
U=\begin{bmatrix}
(1-q^2)&(1-q^4)&(1-q^6)&\cdots &(1-q^{2n})\\
(1-q^4)&(1-q^8)&(1-q^{12})&\cdots &(1-q^{4n})\\
(1-q^6)&(1-q^{12})&(1-q^{18})&\cdots &(1-q^{6n})\\
\vdots&\vdots&\vdots&\ddots&\vdots\\
(1-q^{2n})&(1-q^{4n})&(1-q^{6n})&\cdots &(1-q^{2n^2})\\
\end{bmatrix}.
$$
Apply row operations: $ -r_{n-1}+r_n\rightarrow r_n,\cdots, -r_2+r_3\rightarrow r_3, -r_1+r_2\rightarrow r_2$ to $U$ to obtain: 
$$
U'=\begin{bmatrix}
l_1&l_2&l_3&\cdots &l_n\\
q^2l_1&q^4l_2&q^6l_3&\cdots &q^{2n}l_n\\
q^4l_1&q^8l_2&q^{12}l_3&\cdots &q^{4n}l_n\\
\vdots&\vdots&\vdots&\ddots&\vdots\\
q^{2(n-1)}l_1&q^{4(n-1)}l_2&q^{6(n-1)}l_3&\cdots &q^{2n(n-1)}l_n\\
\end{bmatrix},
$$
where $l_i:=1-q^{2i};$ $i\in \{1,2,\cdots,n\}.$
Clearly, $U'$ is similar to a Vandermonde matrix (since the terms in each column form a geometric sequence) which is well known to be invertible when all parameters are pairwise distinct (this is the case here as $q$ is not a root of unity). This further implies that $U$ is invertible. 
So each $y_{s}t_3^{s}$ is a linear combination of the $\nu_j\in \cR_4.$ As a result, $y_{s}t_3^{s}\in \cR_4$ for all $s\in \{-1,\cdots, -n\}.$
Consequently, $x_{-}=\sum_{s=-1}^{-n}y_st_3^s\in \cR_4$ as desired.

2. Recall that $\delta (t_\kappa)=\lambda_\kappa t_\kappa$ for all $\kappa\in \{3,4,5,6\}$ and $\lambda_\kappa\in \k.$
From Remark \ref{rrr}, we have that
$f_{1}=t_1-at_2t_3^{-1}.$ Recall from Section \ref{c3p3} that 
$t_1=\alpha t_5^{-1}t_3^{-1}$ and $t_2^{-1}=\beta t_6^{-1}t_4^{-1}$ in $\cR_3=\mathscr{A}_{\alpha,\beta}.$ As a result, $f_1=\alpha t_5^{-1}t_3^{-1}-a\beta t_6^{-1}t_4^{-1}t_3^{-1}.$ Hence,
\begin{align}
\delta(f_{1})
=&-(\lambda_5+\lambda_3)\alpha t_5^{-1}t_3^{-1}+(\lambda_6+\lambda_4+\lambda_3)a\beta t_6^{-1}t_4^{-1}t_3^{-1}. \label{c4e2}
\end{align}
From Proposition \ref{c4p1}, the set $\mathcal{B}_4=\left\lbrace f_{1}^{i_1}t_4^{i_4}t_5^{i_5}
t_6^{i_6}, t_3^{i_3}t_4^{i_4}t_5^{i_5}
t_6^{i_6}\mid i_1,i_3\in \mathbb{N} \ \text{and} \ i_4, i_5, i_6\in \mathbb{Z}\right\rbrace$ is a $\k-$basis of $\cR_4.$ Since $t_4, t_5$ and $t_6$ $q$-commute with $f_1$ and $t_3,$ one can 
also write $\delta(f_{1})\in \cR_4$ in terms of $\mathcal{B}_4$ as follows:

\begin{align}
\label{de4}
\delta(f_{1})=&\displaystyle\sum_{r> 0}a_rf_{1}^r+\displaystyle\sum_{s\geq 0}b_st_3^s,
\end{align}
where $a_r$ and $b_s$ belong to $\mathcal{Q}_q=\k_{q^\mu}[t_4^{\pm 1},t_5^{\pm 1},t_6^{\pm 1}].$
\begin{align}
f_{1}^r&=(\alpha t_5^{-1}t_3^{-1}-a\beta t_6^{-1}t_4^{-1}t_3^{-1})^r=\sum_{i=0}^r {r\choose i}_{q^\bullet}(\alpha t_5^{-1}t_3^{-1})^i(-a\beta t_6^{-1}t_4^{-1}t_3^{-1})^{r-i} \nonumber\\
&=\sum_{i=0}^r {r\choose i}_{q^\bullet}\alpha^i(-a\beta)^{r-i}q^{\frac{1}{2}i(i-1)+\frac{3}{2}(r-i)(r-i-1)+3i(i-r)}t_5^{-i}(t_6^{-1}t_4^{-1})^{r-i}t_3^{-r}\nonumber\\
&=c_rt_3^{-r},\label{c4e23}
\end{align}
where 
\begin{align}
c_r=\displaystyle\sum_{i=0}^r {r\choose i}_{q^\bullet}q^{\frac{1}{2}i(i-1)+\frac{3}{2}(r-i)(r-i-1)+3i(i-r)}\alpha^i(-a\beta)^{r-i} t_5^{-i}(t_6^{-1}t_4^{-1})^{r-i} \in \mathcal{Q}_q\setminus \{0\}.\label{c4e24}
\end{align}
Substitute \eqref{c4e23} into \eqref{de4} to obtain;
\begin{align}
\delta(f_{1})=&\displaystyle\sum_{r> 0}a_rc_rt_3^{-r}+\displaystyle\sum_{s\geq 0}b_st_3^s.\label{c4e3}
\end{align}
One can rewrite \eqref{c4e2} as
\begin{equation}
\delta (f_{1})=dt_3^{-1}, \label{c4e4}
\end{equation} 
where $d=-(\lambda_5+\lambda_3)\alpha t_5^{-1}+(\lambda_6+\lambda_4+\lambda_3)a\beta t_6^{-1}t_4^{-1}\in\mathcal{Q}_q.$
Comparing \eqref{c4e3} to \eqref{c4e4} shows that 
$b_s=0$ for all $s\geq 0,$  and $a_rc_r=0$ for all $r\neq 1$. Therefore $\delta(f_{1})=a_1c_1t_3^{-1}.$ Moreover, from \eqref{c4e24}, $c_1=-a\beta t_6^{-1}t_4^{-1}+\alpha t_5^{-1}.$ Hence, 
\begin{align}
\delta(f_{1})=a_1c_1t_3^{-1}&=a_1(-a\beta t_6^{-1}t_4^{-1}+\alpha t_5^{-1})t_3^{-1}
=a_1\alpha t_5^{-1}t_3^{-1}-a_1a\beta t_6^{-1}t_4^{-1}t_3^{-1}.\label{c4e25}
\end{align}
Comparing \eqref{c4e25} to \eqref{c4e2} reveals that
$a_1=-(\lambda_5+\lambda_3)=-(\lambda_6+\lambda_4+\lambda_3).$ Consequently, $\lambda_5=\lambda_6+\lambda_4.$ Hence,
$\delta(f_{1})=-(\lambda_5+\lambda_3)\alpha t_5^{-1}
t_3^{-1}+(\lambda_5+\lambda_3)a\beta t_6^{-1}
t_4^{-1}t_3^{-1}=-(\lambda_5+\lambda_3)f_{1}.$
Finally, since $t_2=\beta t_6^{-1}t_4^{-1}$ in $\cR_4,$ it follows that $\delta(t_2)=
-(\lambda_6+\lambda_4)\beta t_6^{-1}t_4^{-1}=-(\lambda_6+\lambda_4)
t_2=-\lambda_5t_2.$

3. Set $\lambda_1:=-(\lambda_3+\lambda_5)$ and $\lambda_2:=-\lambda_5.$ it follows from points (1) and (2) that $D(e_{\kappa,4})=\text{ad}_x(e_{\kappa,4})+\delta(e_{\kappa,4})=\text{ad}_x(e_{\kappa,4})
+\lambda_\kappa e_{\kappa,4}$ for all $\kappa\in \{1,\cdots, 6\}.$ 
In conclusion, $D=\text{ad}_x+\delta,$ with $x\in \cR_4.$ 
\end{proof}


We proceed to describe $D$ as a derivation of $\cR_5.$
\begin{lemma}
\label{ev5}
\begin{itemize}
\item[1.] $x\in \cR_5.$
\item[2.] $\lambda_4=3\lambda_3+\lambda_5, \ \lambda_6=-3\lambda_3, \ \delta(z_{1})=-(\lambda_3+\lambda_5)z_{1}$ and $ \delta(z_{2})=-\lambda_5z_{2}.$ 
\item[3.] Set $\lambda_1:=-(\lambda_3+\lambda_5), \ \lambda_2:=-\lambda_5$ and $\lambda_6:=-3\lambda_3.$ Then, $D(e_{\kappa,5})=\text{ad}_x(e_{\kappa,5})+\lambda_\kappa e_{\kappa,5}$ for  all $\kappa\in \{1,\cdots, 6\}.$
\end{itemize}
\end{lemma}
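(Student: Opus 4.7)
The argument will parallel the proof of Lemma~\ref{c4l1}, adapted to the inclusion $\cR_5 \subset \cR_4 = \cR_5[t_4^{-1}]$.

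For (1), the starting point is $x \in \cR_4$ from Lemma~\ref{c4l1}, and we must refine this to $x \in \cR_5$. Using the basis of $\cR_5$ in Proposition~\ref{c4p20} (extended to a basis of $\cR_4$ by letting $i_4 \in \mathbb{Z}$), we decompose $x = x_+ + x_-$ according to the sign of the $t_4$-power, so that $x_+ \in \cR_5$ while $x_-$ collects the terms with strictly negative $t_4$-powers. We then pick a suitable element $u \in \cR_5$ that is central in an appropriate sub-quantum-torus of $\cR_5$ and that $q$-commutes non-trivially with $t_4$ (for instance $u = t_3^3 t_6$, which commutes with $t_3, t_5, t_6$ and satisfies $u t_4 = q^6 t_4 u$). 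Since $D$ preserves $\cR_5$, we have $D(u^j) \in \cR_5$ for all $j \geq 1$; combined with $\delta(u^j) \in \cR_5$ and $\text{ad}_{x_+}(u^j) \in \cR_5$, this yields $\text{ad}_{x_-}(u^j) \in \cR_5$. A Vandermonde-type matrix argument, as in Lemma~\ref{c4l1}, then forces each homogeneous component of $x_-$ of fixed negative $t_4$-degree to lie in $\cR_5$, and hence to vanish. Setting up this Vandermonde cleanly --- in particular ensuring that the coefficient ring of the $t_4$-decomposition of $x$ is compatible with the commutation properties of $u$ --- is the main technical obstacle.

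For (2), once $x \in \cR_5$ is established, $\text{ad}_x$ preserves $\cR_5$, so $\delta = D - \text{ad}_x$ does too. Using Remark~\ref{rrr}, we have $z_1 = f_1 - s t_3^2 t_4^{-1}$. Combining $\delta(f_1) = -(\lambda_3 + \lambda_5) f_1$ from Lemma~\ref{c4l1} with $\delta(t_3^2 t_4^{-1}) = (2\lambda_3 - \lambda_4) t_3^2 t_4^{-1}$ and $f_1 = z_1 + s t_3^2 t_4^{-1}$, a direct computation yields
\[
\delta(z_1) = -(\lambda_3 + \lambda_5)\, z_1 + s(\lambda_4 - 3\lambda_3 - \lambda_5)\, t_3^2 t_4^{-1}.
\]
Since $t_3^2 t_4^{-1} \notin \cR_5$ whereas $\delta(z_1) \in \cR_5$, the scalar $\lambda_4 - 3\lambda_3 - \lambda_5$ must vanish; thus $\lambda_4 = 3\lambda_3 + \lambda_5$. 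Combined with $\lambda_5 = \lambda_4 + \lambda_6$ from Lemma~\ref{c4l1}, this forces $\lambda_6 = -3\lambda_3$, and substituting back gives $\delta(z_1) = -(\lambda_3 + \lambda_5) z_1$. The entirely analogous calculation using $z_2 = t_2 - b t_3^3 t_4^{-1}$ together with $\delta(t_2) = -\lambda_5 t_2$ produces $\delta(z_2) = -\lambda_5 z_2$.

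Finally, for (3), setting $\lambda_1 := -(\lambda_3 + \lambda_5)$, $\lambda_2 := -\lambda_5$ and $\lambda_6 := -3\lambda_3$, one checks $\delta(e_{\kappa,5}) = \lambda_\kappa e_{\kappa,5}$ for every $\kappa \in \{1,\ldots,6\}$: the cases $\kappa \in \{3,4,5,6\}$ are immediate from $\delta(t_i) = \lambda_i t_i$, while the cases $\kappa \in \{1,2\}$ correspond exactly to the two identities just derived for $z_1$ and $z_2$. Writing $D = \text{ad}_x + \delta$ with $x \in \cR_5$ then completes the proof.
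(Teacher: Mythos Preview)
Your treatment of parts (2) and (3) is correct and in fact slightly more streamlined than the paper's: the paper first derives $\lambda_4=3\lambda_3+\lambda_5$ from the computation of $\delta(z_2)$ (arguing that otherwise $t_3^3\in\cR_5 t_4$, which contradicts the basis via relation~\eqref{de3}), then handles $\delta(z_1)$, and finally obtains $\lambda_6=-3\lambda_3$ by applying $\delta$ to the expression for $t_6^{-1}$ coming from~\eqref{de3}. Your route through $\delta(z_1)$ first, using that $t_3^2 t_4^{-1}$ is a basis vector of $\cR_4$ with negative $t_4$-exponent (Corollary~\ref{c4c1}) and hence not in $\cR_5$, and then combining with $\lambda_5=\lambda_4+\lambda_6$ from Lemma~\ref{c4l1}, is equally valid and avoids invoking~\eqref{de3}.

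Part (1), however, has a genuine gap. The Vandermonde argument of Lemma~\ref{c4l1} worked because $\cR_3$ is a quantum torus: writing $x=\sum_s y_s t_3^s$ with $y_s$ in the sub-torus $\mathcal{Q}_q=\k_{q^\mu}[t_4^{\pm1},t_5^{\pm1},t_6^{\pm1}]$, the element $z=t_4t_5^{-1}t_6$ is \emph{central in $\mathcal{Q}_q$}, so $\mathrm{ad}_{x_-}(z^j)=\sum_{s<0}(1-q^{-2js})y_s t_3^s z^j$ isolates the $t_3$-degrees. In $\cR_4$ no such clean module decomposition over a commutative-with-$u$ coefficient ring exists. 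With the basis $\mathcal{B}_5$ (allowing $i_4\in\mathbb{Z}$) the coefficients involve $z_1$, and your proposed $u=t_3^3 t_6$ does \emph{not} commute with $z_1$: in $A^{(5)}$ one has $t_3 z_1=q^{-1}z_1 t_3+d_2[1]z_2$, so $\mathrm{ad}_{x_-}(u^j)$ produces cross-terms mixing different basis monomials rather than scaling each $t_4$-homogeneous piece. The alternative basis $\mathcal{B}_4$ has coefficients involving $f_1$, but $t_3 f_1=q^{-1}f_1 t_3+d_2[1]t_2$ by the same mechanism, so the obstacle persists; and since $Z(\cR_5)=\k$ (Lemma~\ref{ev25}) there is no non-trivial central element to use instead. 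You flag this as ``the main technical obstacle'' but do not resolve it, and I do not see how to.

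The paper abandons the Vandermonde scheme at this step. Instead it assumes a minimal negative $t_4$-exponent $j_0$, computes $\mathrm{ad}_{x_-}(t_6)$ to obtain the constraint $k=i-j_0$ on surviving coefficients, then computes $\mathrm{ad}_{x_-}(t_3)$ and uses the relations~\eqref{de2}--\eqref{de3} to reduce $z_2$ and $t_3^3$ to basis form. This yields a coupled three-term recurrence (equations (\ref{2ee})--(\ref{4e})) among the coefficients $a_{(\xi,i,j_0,k,l)}$ for $\xi\in\{0,1,2\}$; the constraint from $t_6$ is precisely what guarantees the recurrence coefficient $g[i,j_0,k]$ never vanishes, so a descent on $i$ forces all $a_{(\xi,i,j_0,k,l)}=0$, contradicting minimality of $j_0$.
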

 
\begin{proof}
In this proof, we denote $\underline{\upsilon}:=(i,j,k,l)\in \mathbb{N}\times \mathbb{Z}^3.$

1.   We already know that 
 $x\in \cR_4=\cR_5[t_4^{-1}].$ Given the basis $\mathcal{B}_5$ of $\cR_5$ (Proposition \ref{c4p20}),  $x$ can be written as 
 $x=\displaystyle\sum_{(\xi, \underline{\upsilon})\in I}a_{(\xi,\underline{\upsilon})}z_{1}^{i}t_3^{\xi}
t_4^{j}t_5^{k}t_6^{l},$ 
where $I$ is a finite subset of $\{0,1,2\}\times \mathbb{N}\times \mathbb{Z}^3$ and
 $a_{(\xi, \underline{\upsilon})}$ are scalars.   Write 
 $x=x_-+x_+,$ where 
$$x_+=\sum_{\substack{(\xi, \underline{\upsilon})\in I\\ j\geq 0}} a_{(\xi, {\underline{\upsilon})}}z_{1}^{i}t_3^{\xi}
 t_4^{j}t_5^{k}t_6^{l}\ \  \text{and} \ \ x_-=\displaystyle \sum_{\substack{(\xi, \underline{\upsilon})\in I \\ j<0}}a_{(\xi,\underline{\upsilon})}z_{1}^{i}t_3^{\xi}
t_4^{j}t_5^{k}t_6^{l}.$$    
 Suppose that there exists a minimum  $j_0<0$ such that  $a_{(\xi, i,j_0,k,l)}\neq 0$ for some
$(\xi,i,j_0,k,l)\in I$ and $a_{(\xi,i,j,k,l)}=0$ for all $(\xi,i,j_0,k,l)\in I$ with $j<j_0.$  Given this assumption, write
\begin{align*}
 x_-=\displaystyle\sum_{ \substack{(\xi,\underline{\upsilon})\in I\\ j_0 \leq j\leq -1}}a_{(\xi,\underline{\upsilon})}z_{1}^{i}t_3^{\xi}
 t_4^{j}t_5^{k}t_6^{l}.
\end{align*} 
Now, $D(t_6)=\text{ad}_{x_+}(t_6)+\text{ad}_{x_-}(t_6)+
 \delta(t_6)\in \cR_5.$ This implies that $\text{ad}_{x_-}(t_6)\in \cR_5,$ since $\text{ad}_{x_+}(t_6)+\delta(t_6)=\text{ad}_{x_+}(t_6)+
 \lambda_6t_6\in \cR_5.$ We aim to show that $x_-=0.$ Since $t_6$ is normal in $\cR_5,$ one can easily verify that
 \begin{align*}
 \text{ad}_{x_-}(t_6)&=
\sum_{\substack{(\xi,\underline{\upsilon})\in I\\ j_0 \leq j\leq -1}}\left( q^{3(i-j-k)}-1\right) a_{(\xi, \underline{\upsilon})}
z_{1}^{i}t_3^{\xi}
t_4^{j}t_5^{k}t_6^{l+1}.
\end{align*} 
Set $\underline{w}:=(i,j,k,l)\in \mathbb{N}^2\times \mathbb{Z}^2.$ 
One can equally write $\text{ad}_{x_-}(t_6)\in \cR_5$ in terms of the basis $\mathcal{B}_5$  of $\cR_5$ (Proposition \ref{c4p20}) as:
 \begin{align*}
\text{ad}_{x_-}(t_6)&= \sum_{(\xi,\underline{w})\in J} b_{(\xi, {\underline{w}})}z_{1}^{i}
t_3^{\xi}t_4^{j}t_5^{k}t_6^{l},
 \end{align*}
 where  $ J$ is a finite subset of $\{0,1,2\}\times \mathbb{N}^2\times \mathbb{Z}^2$ and $b_{(\xi, \underline{w})}$ are all scalars.  It follows that
 $$
 \sum_{\substack{(\xi,\underline{\upsilon})\in I\\ j_0 \leq j\leq -1}}\left( q^{3(i-j-k)}-1\right) a_{(\xi, \underline{\upsilon})}
z_{1}^{i}t_3^{\xi}
t_4^{j}t_5^{k}t_6^{l+1}=  \sum_{(\xi,\underline{w})\in J} b_{(\xi, {\underline{w}})}z_{1}^{i}
t_3^{\xi}t_4^{j}t_5^{k}t_6^{l}.
 $$
 As $\mathcal{B}_5$ is a basis for $\cR_5,$ we deduce from Corollary \ref{c4c1} that 
 $\left(z_{1}^{i}
t_3^{\xi}t_4^{j}t_5^{k}t_6^{l}\right)_{(i\in \mathbb{N}; j,k,l\in \mathbb{Z}; \xi\in \{0,1,2\})}$ is a basis for $\cR_5[t_4^{-1}].$
  Now, at $j=j_0,$ denote $\underline{\upsilon}=(i,j,k,l)$ by $\underline{\upsilon}_0:=
 (i,j_0,k,l).$ Since $\underline{v}_0\in \mathbb{N}\times \mathbb{Z}^3$ (with $j_0<0$) and $\underline{w}=(i,j,k,l)\in \mathbb{N}^2\times \mathbb{Z}^2$ (with $j\geq 0$), it follows from the above equality that, at $\underline{\upsilon}_0,$  we must have
 $$\left( q^{3(i-j_0-k)}-1\right) a_{(\xi, \underline{\upsilon}_0)}=0.$$
From our initial assumption, the coefficients $a_{(\xi, \underline{\upsilon}_0)}$ are all not zero, therefore $q^{3(i-j_0-k)}-1=0. $ This implies   that
 \begin{align}
 k=i-j_0,\label{2e}
 \end{align}
 for some $(\xi,\underline{\upsilon}_0)\in I.$ 
 
In a similar manner, $D(t_3)=\text{ad}_{x_+}(t_3)+\text{ad}_{x_-}(t_3)+
 \delta(t_3)\in \cR_5.$ This implies that $\text{ad}_{x_-}(t_3)\in \cR_5,$ since $\text{ad}_{x_+}(t_3)+\delta(t_3)=\text{ad}_{x_+}(t_3)+
 \lambda_3t_3\in \cR_5.$  We have that
 \begin{align*}
 \text{ad}_{x_-}(t_3)=
\sum_{\substack{(\xi,\underline{\upsilon})\in I\\ j_0\leq j\leq -1}}a_{(\xi,\underline{\upsilon})}
z_{1}^{i}t_3^{\xi}
t_4^{j}t_5^{k}t_6^{l}t_3-
 \sum_{\substack{(\xi,\underline{\upsilon})\in I\\ j_0 \leq j\leq -1}} a_{(\xi, \underline{\upsilon})}
t_3z_{1}^{i}t_3^{\xi}
t_4^{j}t_5^{k}t_6^{l}.
\end{align*} 
One can deduce from Lemma \ref{c3l3}(3a) that $$t_3z_1^{i}=q^{-i}z_1^it_3+d_2[i]z_1^{i-1}z_2,$$ where
$d_2[i]=q^{1-i}d_2[1]\left(\dfrac{1-q^{-2i}}{1-q^{-2}} \right),$ $d_2[1]=-(q+q^{-1}+q^{-3})$ and $d_2[0]=0.$ 
Therefore, the above expression for $ \text{ad}_{x_-}(t_3)$ can be expressed as:
\begin{align*}
\text{ad}_{x_-}(t_3)=&\sum_{\substack{(0,\underline{\upsilon})\in I\\ j_0 \leq j\leq -1}}f[i,j,k] a_{(0,\underline{\upsilon})}z_{1}^{i}t_3t_4^{j}t_5^{k}
t_6^{l}+ \sum_{\substack{(1,\underline{\upsilon})\in I\\ j_0 \leq j\leq -1}}f[i,j,k] a_{(1,\underline{\upsilon})}z_{1}^{i}t_3^{2}t_4^{j}t_5^{k}
t_6^{l} +
\\
&+\sum_{\substack{(2,\underline{\upsilon})\in I\\ j_0 \leq j\leq -1}}f[i,j,k] a_{(2,\underline{\upsilon})}z_{1}^{i}t_3^{3}t_4^{j}t_5^{k}
t_6^{l}-\sum_{\substack{(\xi,\underline{\upsilon})\in I\\ j_0 \leq j\leq -1}}a_{(\xi,\underline{\upsilon})}d_2[i]z_{1}^{i-1}z_{2}t_3^{\xi}t_4^{j}t_5^{k}
t_6^{l},
\end{align*}
where $f[i,j,k]:= q^{-(k+3j)}-q^{-i}.$
Recall from \eqref{de2} and \eqref{de3} that
$$z_{2}=\dfrac{1}{a}\left( \alpha t_5^{-1}-z_{1}t_3\right) \ \ \text{and} \ \ t_3^3=\dfrac{\beta}{b} t_6^{-1}-\dfrac{q^3\alpha}{ab}t_4t_5^{-1}+\dfrac{1}{ab} z_{1}t_3t_4,$$ where $a$ and $b$ are non-zero 
scalars (Appendix \ref{appenc}).
Using these two expressions, one can write $\text{ad}_{x_-}(t_3)$ in terms of the basis of $\cR_5$ as: 

\begin{align}
\text{ad}_{x_-}(t_3)=&\mathcal{K}+\sum_{\substack{(0,\underline{\upsilon}_0)\in I}}
g[i,j_0,k]a_{(0,{\underline{\upsilon}_0})}z_{1}^{i}t_3
t_4^{j_0}t_5^{k}t_6^{l}+\sum_{\substack{(1,\underline{\upsilon}_0)\in I}}
g[i,j_0,k]a_{(1,\underline{\upsilon}_0)}z_{1}^{i}t_3^{2}
t_4^{j_0}t_5^{k}t_6^{l}\nonumber
\\
&+ \sum_{\substack{(2,\underline{\upsilon}_0)\in I}}\frac{q^{\bullet}\beta}{b}  a_{(2,{\underline{\upsilon}_0})}g[i,j_0,k]
z_{1}^{i}
t_4^{j_0}t_5^{k}t_6^{l-1} - \sum_{\substack{(\xi,\underline{\upsilon}_0)\in I}}\frac{q^{\bullet}\alpha}{a} d_2[i]
a_{(\xi,\underline{\upsilon}_0)}z_{1}^{i-1}t_3^{\xi}
t_4^{j_0}t_5^{k-1}t_6^{l}\nonumber\\
=&\sum 1/b\left(  {q^{\bullet}\beta} g[i,j_0,k]a_{(2,i,j_0,k,l+1)}+(q^{\bullet}\alpha b{d_2[i+1]}/{a})a_{(0, i+1,j_0,k+1,l)}\right)   z_{1}^it_4^{j_0}t_5^kt_6^l\nonumber\\
&+\sum \left( g[i,j_0,k]a_{(0,i,j_0,k,l)}+(q^{\bullet}\alpha {d_2[i+1]}/{a}) a_{(1,i+1,j_0,k+1,l)}\right)  z_{1}^it_3t_4^{j_0}t_5^kt_6^l\nonumber\\
&+\sum \left( g[i,j_0,k]a_{(1,i,j_0,k,l)}+(q^{\bullet}\alpha {d_2[i+1]}/{a})a_{(2,i+1,j_0,k+1,l)}\right)  z_{1}^it_3^2t_4^{j_0}t_5^kt_6^l+\mathcal{K}
,\label{3e}
\end{align}
where $g[i,j_0,k]:=q^{-(k+3j_0)}-q^{-i}+{d_2[i]}/{a}$ and   $$\mathcal{K}\in \text{Span}\left( \mathcal{B}_5\setminus \{z_{1}^{i}t_3^{\xi}
 t_4^{j_0}t_5^{k}t_6^{l}\mid (\xi, i,j_0,k,l)\in \{0,1,2\}\times \mathbb{N}\times \mathbb{Z}^3 \}\right).$$
One can also write $\text{ad}_{x_-}(t_3)\in \cR_5$ in terms of the basis $\mathcal{B}_5$ of $\cR_5$ (Proposition \ref{c4p20}) as:
 \begin{align}
\text{ad}_{x_-}(t_3)=& \sum_{(\xi,\underline{w})\in J}b_{(\xi, \underline{w})} z_{1}^{i}
t_3^{\xi}t_4^{j}t_5^{k}t_6^{l},\label{1e}
 \end{align}
 where $J$ is a finite subset of $\{0,1,2\}\times \mathbb{N}^2\times \mathbb{Z}^2$, and $b_{(\xi, {\underline{w})}}\in \k.$ Recall: $\underline{w}=(i,j,k,l)\in \mathbb{N}^2\times \mathbb{Z}^2.$ Now, \eqref{3e} and \eqref{1e} imply that
\begin{align*}
 \sum_{(\xi,\underline{w})\in J}b_{(\xi, \underline{w})} &z_{1}^{i}
t_3^{\xi}t_4^{j}t_5^{k}t_6^{l}\\
=&\sum 1/b\left(  {q^{\bullet}\beta} g[i,j_0,k]a_{(2,i,j_0,k,l+1)}+(q^{\bullet}\alpha b{d_2[i+1]}/{a})a_{(0, i+1,j_0,k+1,l)}\right)   z_{1}^it_4^{j_0}t_5^kt_6^l\nonumber\\
&+\sum \left( g[i,j_0,k]a_{(0,i,j_0,k,l)}+(q^{\bullet}\alpha {d_2[i+1]}/{a}) a_{(1,i+1,j_0,k+1,l)}\right)  z_{1}^it_3t_4^{j_0}t_5^kt_6^l\nonumber\\
&+\sum \left( g[i,j_0,k]a_{(1,i,j_0,k,l)}+(q^{\bullet}\alpha {d_2[i+1]}/{a})a_{(2,i+1,j_0,k+1,l)}\right)  z_{1}^it_3^2t_4^{j_0}t_5^kt_6^l+\mathcal{K}.
\end{align*} 
We have already established that 
 $\left(z_{1}^{i}
t_3^{\xi}t_4^{j}t_5^{k}t_6^{l}\right)_{(i\in \mathbb{N}; j,k,l\in \mathbb{Z}; \xi\in \{0,1,2\})}$ is a basis for $\cR_5[t_4^{-1}].$ Given that  $\underline{v}_0=(i,j_0,k,l)\in \mathbb{N}\times \mathbb{Z}^3$ (with $j_0<0$) and $\underline{w}=(i,j,k,l)\in \mathbb{N}^2\times \mathbb{Z}^2$ (with $j\geq 0$), it follows that
 \begin{align}
 q^{\bullet}\beta g[i,j_0,k]a_{(2,i,j_0,k,l+1)}+(q^{\bullet}\alpha b {d_2[i+1]}/{a})a_{(0,i+1,j_0,k+1,l)}&=0.\label{2ee}\\
 g[i,j_0,k]a_{(0,i,j_0,k,l)}+(q^{\bullet}\alpha {d_2[i+1]}/{a})a_{(1,i+1,j_0,k+1,l)}&=0.\label{3ee}\\
 g[i,j_0,k]a_{(1,i,j_0,k,l)}+(q^{\bullet}\alpha {d_2[i+1]}/{a})a_{(2,i+1,j_0,k+1,l)}&=0.\label{4e}
 \end{align}
Suppose that there exists $(\xi, i,j_0,k,l)\in I$ such that $g[i,j_0,k]=0.$  Then, $$g[i,j_0,k]=q^{-(k+3j_0)}-q^{-i}+d_2[i]/a=0.$$ 
Note, $d_2[i]=d_2[1]q^{1-i}\left( \dfrac{1-q^{-2i}}{1-q^{-2}}\right),$  where $d_2[1]=-(q+q^{-1}+q^{-3})$ and $d_2[0]=0.$ Again, recall from Appendix \ref{appenc} that $a=(q^2+1+q^{-2})/(q^{-2}-1)=\dfrac{qd_2[1]}{1-q^{-2}}.$ Given these expressions for $d_2[i]$ and $a$, we have that
$$g[i,j_0,k]=q^{-(k+3j_0)}-q^{-i}+d_2[i]/a=q^{-3j_0-k}-q^{-3i}=0.$$ Since $q$ is not a root of unity, we get
\begin{align}
k=3(i-j_0).\label{4ee}
\end{align}
Comparing \eqref{4ee} to \eqref{2e} shows that
 $i-j_0=0$ which implies that $ i=j_0<0,$ a contradiction (note, $i\geq 0$). 
Therefore, $g[i,j_0,k]\neq 0$ for all $(\xi, i,j,k,l)\in I.$

Now, observe that if there exists $\xi\in \{0,1,2\}$ such that $a_{(\xi, i,j_0,k,l)}=0$ for all $(i,j_0,k,l)\in \mathbb{N}\times \mathbb{Z}^3$, then one can easily deduce from equations \eqref{2ee}, \eqref{3ee} and \eqref{4e} that $a_{(\xi, i,j_0,k,l)}=0$ for all $(\xi, i,j_0,k,l)\in I.$  This will contradict our initial assumption. Therefore,
there exists some $(i,j_0,k,l)\in \mathbb{N}\times \mathbb{Z}^3$ such that $a_{(\xi, i,j_0,k,l)}\neq 0$ for each $\xi\in \{0,1,2\}.$ Without loss of generality, let
$(u,j_0,v,w)$ be the greatest element in the lexicographic order on $\mathbb{N}\times \mathbb{Z}^3$ such that $a_{(0,u,j_0,v,w)}\neq 0$ and $a_{(0,i,j_0,k,l)}=0$ for all $i>u.$

From \eqref{3ee}, at $(i,j_0,k,l)=(u,j_0,v,w)$, we have: 
\begin{align*}
g[u,j_0,v]a_{(0,u,j_0,v,w)}+(q^{\bullet}\alpha {d_2[u+1]}/{a})a_{(1,u+1,j_0,v+1,w)}&=0.
\end{align*}
From \eqref{4e}, at $(i,j_0,k,l)=(u+1,j_0,v+1,w)$, we have: 
\begin{align*}
g[u+1,j_0,v+1]a_{(1, u+1,j_0,v+1,w)}+(q^{\bullet}\alpha {d_2[u+2]}/{a})a_{(2, u+2,j_0,v+2,w)}&=0.
\end{align*}
Finally, from \eqref{2ee}, at $(i,j_0,k,l)=(u+2,j_0,v+2,w-1),$ we have: 
\begin{align*}
q^{\bullet}\beta g[u+2,j_0,v+2]a_{(2,u+2,j_0,v+2,w)}+(q^{\bullet}\alpha b {d_2[u+3]}/{a})a_{(0,u+3,j_0,v+3,w-1)}&=0.
\end{align*}
Note: $ a, b,$ $\alpha,$ $\beta,$ $q^\bullet\neq 0;$  $g[i,j_0,k]\neq 0$ for all $(\xi, i,j_0,k,l)\in I;$ and $d_2[i]\neq 0$ for $i>0.$ Since $u+3>u,$ it follows from the above list of equations (starting from the last one)  that
$$a_{(0,u+3,j_0,v+3,w-1)}=0\Rightarrow 
a_{(2,u+2,j_0,v+2,w)}=0\Rightarrow a_{(1,u+1,j_0,v+1,w)}=0\Rightarrow 
a_{(0,u,j_0,v,w)}=0,$$ a contradiction!
Hence, $a_{(0,i,j_0,k,l)}=0$ for all $(i,j_0,k,l)\in \mathbb{N}\times \mathbb{Z}^3.$
From \eqref{2ee}, \eqref{3ee} and \eqref{4e}, one can easily conclude that $a_{(\xi, i,j_0,k,l)}=0$ for all  $(\xi, i,j_0,k,l)\in I.$   
This contradicts our initial assumption, hence $x_-=0.$ Consequently, $x=x_+\in \cR_5$ as desired.

2. From Remark \ref{rrr}, we have
$z_{2}=t_{2}-bt_3^3t_4^{-1}$.  Since $\delta(t_\kappa)=\lambda_\kappa t_\kappa, \ \kappa\in \{2,\cdots,6\}, $ with $\lambda_2:=-\lambda_5$ (see Lemma \ref{c4l1}), it follows that
\begin{align*}
\delta(z_{2})=&-\lambda_5t_{2}-b(3\lambda_3-\lambda_4)
t_3^3t_4^{-1}
=-\lambda_5z_{2}-b(3\lambda_3-\lambda_4+\lambda_5)
t_3^3t_4^{-1}.
\end{align*}
Furthermore, 
\begin{align*}
D(z_{2})&=\text{ad}_x(z_{2})+\delta(z_{2})
=\text{ad}_x(z_{2})
-\lambda_5z_{2}-b(3\lambda_3-\lambda_4+\lambda_5)
t_3^3t_4^{-1}\in \cR_5.
\end{align*}
Hence $b(3\lambda_3-\lambda_4+\lambda_5)
t_3^3t_4^{-1}\in \cR_5,$ since $\text{ad}_x(z_{2})
-\lambda_5z_{2}\in \cR_5.$
This implies that 
$b(3\lambda_3-\lambda_4+\lambda_5)
t_3^3\in \cR_5{t}_4$ (note, from Appendix \ref{appenc}, $b\neq 0$).  Set 
$w:=3\lambda_3-\lambda_4+\lambda_5.$ Suppose that $w\neq 0.$ From \eqref{de3}, we have:
$$t_3^3=\dfrac{\beta}{b} t_6^{-1}-\dfrac{q^3\alpha}{ab}t_4t_5^{-1}+\dfrac{1}{ab} z_{1}t_3t_4.$$
It follows that
 $$wbt_3^3=
{w\beta}t_6^{-1}-\frac{q^3w\alpha}{a}
t_4t_5^{-1}+\frac{w}{a}z_{1}t_3t_4\in \cR_5t_4.$$ 
Since $t_3^3, \ t_4t_5^{-1}$ and $z_1t_3t_4$ are all elements of $\cR_5t_4,$ it implies that
$t_6^{-1}\in \cR_5t_4.$ Hence, $1\in \cR_5t_4t_6.$  Using the basis  $\mathcal{B}_5$ of $\cR_5$ (Proposition \ref{c4p20}), this leads to a contradiction. Therefore, $w=0$. That is, $3\lambda_3-\lambda_4+\lambda_5=0,$ and so $\lambda_4=
3\lambda_3+\lambda_5.$ This further implies that $\delta(z_{2})=-\lambda_5 z_{2}$ as desired.

Again, from Lemma \ref{c4l1}, we have that $\delta(f_1)=-(\lambda_3+\lambda_5)f_1.$ 
Recall from Remark \ref{rrr} that $z_{1}=f_{1}-st_3^2t_4^{-1}.$
It follows that
\begin{align*}
\delta(z_{1})=&-(\lambda_3+\lambda_5)f_{1}-s(2\lambda_3-\lambda_4)t_3^2t_4^{-1}
=-(\lambda_3+\lambda_5)z_1-s(3\lambda_3-\lambda_4+\lambda_5)t_3^2t_4^{-1}\\
=&-(\lambda_3+\lambda_5)z_1-s(3\lambda_3-(3\lambda_3+\lambda_5)+\lambda_5)t_3^2t_4^{-1}
=-(\lambda_3+\lambda_5)z_{1}.
\end{align*}
Finally, we know that $\delta(t_6)=\lambda_6t_6.$ This implies that $ \delta(t_6^{-1})=-\lambda_6t_6^{-1}.$ From \eqref{de3}, we have that
$$t_3^3=\dfrac{\beta}{b} t_6^{-1}-\dfrac{q^3\alpha}{ab}t_4t_5^{-1}+\dfrac{1}{ab} z_{1}t_3t_4,$$ where 
$a$ and $b$ are non-zero scalars (Appendix \ref{appenc}). This implies that $$t_6^{-1}=\dfrac{b}{\beta}t_3^3+\dfrac{q^3\alpha }{a\beta}
t_4t_5^{-1}-\dfrac{1}{a\beta}z_1t_3t_4.$$
Given that $\delta(z_1)=-(\lambda_3+\lambda_5)z_1, \ \delta(t_3)=\lambda_3t_3, \ \delta(t_4)=(3\lambda_3+\lambda_5)t_4$ and $\delta(t_5)=\lambda_5t_5,$ 
applying  $\delta$ to the above relation gives
$$-\lambda_6t_6^{-1}=3\lambda_3\left( \dfrac{b}{\beta}t_3^3+\dfrac{q^3\alpha }{a\beta}
t_4t_5^{-1}-\dfrac{1}{a\beta}z_1t_3t_4\right).$$
It follows that $\lambda_6=-3\lambda_3$ as desired.

3. Set $\lambda_1:=-(\lambda_3+\lambda_5)$ and $\lambda_2:=-\lambda_5.$ It follows from points (1) and (2) that $D(e_{\kappa,5})=\text{ad}_x(e_{\kappa,5})+\delta (e_{\kappa,5})=\text{ad}_x(e_{\kappa,5})+\lambda_\kappa e_{\kappa,5}$ for  all $\kappa\in \{1,\cdots,6\}.$ In conclusion, $D=\text{ad}_x+\delta$ with $x\in \cR_5.$ 
\end{proof}

We are now ready to describe $D$ as a derivation of $A_{\alpha,\beta}.$
\begin{lemma}
\label{ev19}
\begin{itemize}
\item[1.] $x\in A_{\alpha,\beta}.$
\item[2.] $\delta(e_\kappa)=0$ for all
$\kappa\in \{1,\cdots, 6\}.$
\item[3.] $D=\text{ad}_x.$  
\end{itemize}
\end{lemma}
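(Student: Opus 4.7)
The argument completes the descent begun in Lemmas \ref{c4l1} and \ref{ev5}: first bring $x$ from $\cR_5$ down to $A_{\alpha,\beta}$, then use this containment to force the scalar part $\delta$ of $D$ to vanish. The plan is (1) show $x \in A_{\alpha,\beta}$; (2) conclude $\lambda_3 = \lambda_5 = 0$, so $\delta \equiv 0$; (3) deduce $D = \text{ad}_x$.

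For (1), the equalities $t_5 = e_5$ and $t_6 = e_6$ identify $\cR_5$ with the localisation $A_{\alpha,\beta}[e_5^{-1}, e_6^{-1}]$; combining Proposition \ref{c3p5} with Corollary \ref{c4c2} shows that
\[
\mathcal{B}_\infty := \bigl\{ e_1^i e_2^j e_3^{\epsilon_1} e_4^{\epsilon_2} t_5^k t_6^l \;:\; i, j \in \N,\; \epsilon_1, \epsilon_2 \in \{0,1\},\; k, l \in \Z \bigr\}
\]
is a $\k$-basis of $\cR_5$. Split $x = x_A + x_B$, where $x_A$ collects the $\mathcal{B}_\infty$-monomials with $k, l \geq 0$ (so $x_A \in A_{\alpha,\beta}$) and $x_B$ the remainder. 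Since $D(e_\kappa) \in A_{\alpha,\beta}$, $\text{ad}_{x_A}(e_\kappa) \in A_{\alpha,\beta}$, and $\delta(e_\kappa)$ has an explicit $\mathcal{B}_\infty$-expansion via the formulas of Remark \ref{rrr}, the $\mathcal{B}_\infty$-terms of $\text{ad}_{x_B}(e_\kappa) + \delta(e_\kappa)$ with $k < 0$ or $l < 0$ must cancel. Applying this with $\kappa = 1$ and $\kappa = 2$ and tracking the lexicographically smallest exponent pair in $x_B$ through the quantum Serre commutations $t_5 e_1 = q e_1 t_5 - (1+q^2) e_3$ and $t_6 e_2 = q^3 e_2 t_6 + (q^4+q^2-1) e_4 + (q^2-q^4) e_3 e_5$ yields enough coefficient equations to force $x_B = 0$.

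For (2), with $x \in A_{\alpha,\beta}$ the map $\delta = D - \text{ad}_x$ is a derivation of $A_{\alpha,\beta}$. Apply $\delta$ to $e_3 = t_3 - at_4 t_5^{-1} - st_5^2 t_6^{-1}$ (Remark \ref{rrr}), using $\delta(t_i) = \lambda_i t_i$ together with $\lambda_4 = 3\lambda_3 + \lambda_5$ and $\lambda_6 = -3\lambda_3$ from Lemma \ref{ev5}(2). Re-expressing $t_3 = e_3 + at_4 t_5^{-1} + st_5^2 t_6^{-1}$, substituting $t_4 = e_4 + b t_5^3 t_6^{-1}$, and using the commutation $t_5^3 t_6^{-1} t_5^{-1} = q^{-3} t_5^2 t_6^{-1}$ produces
\[
\delta(e_3) \;=\; \lambda_3 e_3 \;-\; 2a\lambda_3\, e_4 t_5^{-1} \;-\; 2\bigl[(abq^{-3}+s)\lambda_3 + s\lambda_5\bigr] t_5^2 t_6^{-1}.
\]
The two rightmost monomials belong to $\mathcal{B}_\infty \setminus A_{\alpha,\beta}$ and are linearly independent by Corollary \ref{c4c2}; since $\delta(e_3) \in A_{\alpha,\beta}$ and $a, s \in \k^\times$, their coefficients must vanish, giving first $\lambda_3 = 0$ and then $\lambda_5 = 0$. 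Consequently $\lambda_4 = \lambda_6 = 0$, so $\delta$ annihilates each generator $t_i$ of $\cR_3$ and is therefore identically zero on $\cR_3 \supset A_{\alpha,\beta}$; in particular $\delta(e_\kappa) = 0$ for all $\kappa$. Step (3) is then immediate: $D = \text{ad}_x$ with $x \in A_{\alpha,\beta}$, so $D$ is inner.

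The main obstacle is (1). Unlike in Lemmas \ref{c4l1} and \ref{ev5}, where the generator driving the descent $q$-commuted cleanly with the basis elements in play, here $e_5$ and $e_6$ obey the full quantum Serre relations with $e_1, e_2, e_3, e_4$; the commutators $\text{ad}_{x_B}(e_\kappa)$ therefore produce multi-term corrections in $\mathcal{B}_\infty$, and the lexicographic-minimality contradiction has to be extracted by carefully isolating the leading monomial in $(k, l)$ in each such expansion. Once (1) is secured, (2) reduces to the short explicit computation in the quantum torus displayed above.
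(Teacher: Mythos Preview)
Your treatment of part (2) is correct and in fact slightly tidier than the paper's: you extract both $\lambda_3=0$ and $\lambda_5=0$ from the single computation of $\delta(e_3)$, whereas the paper first computes $\delta(e_4)=(3\lambda_3+\lambda_5)e_4-2b\lambda_5 t_5^3t_6^{-1}$ to obtain $\lambda_5=0$, and only then turns to $\delta(e_3)$ (using the identity $q^{-3}ab+s=0$ from Appendix~\ref{appenc}) to get $\lambda_3=0$. Your route avoids that identity by reading off the coefficient of $e_4t_5^{-1}$ first. Part (3) is of course immediate from (1) and (2).

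Part (1), however, is not proved in your proposal; you give only a strategy (test against $e_1$ and $e_2$, track a lexicographically minimal exponent pair) and explicitly label it ``the main obstacle'' without carrying it out. The paper instead tests against $e_3$, and the argument is substantially more delicate than the sketches in Lemmas~\ref{c4l1} and~\ref{ev5}. It proceeds in two stages. First, the commutation $t_6^l e_3 = e_3 t_6^l + d_3[l]\,t_5^2 t_6^{l-1}$ isolates, at the minimal $t_6$-exponent $l_0<0$, a basis term whose coefficient is a nonzero multiple of $d_3[l_0]$; this forces all negative powers of $t_6$ out of $x_-$. Second, with $l\geq 0$, the commutation $t_5^k e_3 = q^{-k}e_3 t_5^k + d_2[k]\,e_4 t_5^{k-1}$ introduces factors $e_3^{\epsilon_1+1}$ and $e_4^{\epsilon_2+1}$ which must be rewritten in the basis $\mathcal{B}$ using the structure relations for $e_4^2$ and $e_3e_4^2$ from Lemma~\ref{c3l2}. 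This produces a coupled system of four recursions (the paper's equations \eqref{c4e15}--\eqref{c4e18}) in the coefficients $a_{(\epsilon_1,\epsilon_2,i,j,k_0,l)}$, and one has to argue by induction on $l$ that the only solution is zero. Your proposed test elements $e_1$, $e_2$ interact with $e_3$, $e_4$, $t_5$, $t_6$ through the full quantum Serre relations (including the length-three term in $e_6e_2$), so commuting them through $e_1^i e_2^j e_3^{\epsilon_1} e_4^{\epsilon_2} t_5^k t_6^l$ generates many correction terms and it is far from clear that a clean lexicographic-minimality argument survives; at any rate you have not shown that it does.
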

\begin{proof}
In this proof, we denote $\underline{\upsilon}:=(i,j,k,l)\in \mathbb{N}^2\times\mathbb{Z}^2.$  Also, recall from  DDA of $A_{\alpha,\beta}$ at the beginning of this section that $t_5=e_5$ and $t_6=e_6.$

1. Given the basis $\mathcal{B}$ of $A_{\alpha,\beta}$ (Proposition \ref{c3p5}), one can write
 $x\in \cR_5=A_{\alpha,\beta}[t_5^{-1},t_6^{-1}]$ as: $$x=\displaystyle\sum_{(\epsilon_1, \epsilon_2,\underline{\upsilon})\in I}a_{(\epsilon_1,\epsilon_2, \underline{\upsilon})}e_{1}^{i}e_{2}^{j}e_3^{\epsilon_1}
e_4^{\epsilon_2}t_5^{k}t_6^{l},$$
where $I$ is a finite subset of $\{0,1\}^2\times \mathbb{N}^2\times\mathbb{Z}^2,$ and $a_{(\epsilon_1,\epsilon_2,\underline{\upsilon})}$ are scalars. Write 
 $x=x_-+x_+,$ where 
 $$x_+=\sum_{\substack{(\epsilon_1, \epsilon_2,\underline{\upsilon})\in I\\ k, \ l\geq 0 }}a_{(\epsilon_1,\epsilon_2, \underline{\upsilon})}e_{1}^{i}e_{2}^{j}e_3^{\epsilon_1}
 e_4^{\epsilon_2}t_5^{k}t_6^{l},$$ and $$
x_-=\displaystyle\sum_{\substack{(\epsilon_1, \epsilon_2,\underline{\upsilon})\in I\\ k<0 \ \text{or} \ l<0}}a_{(\epsilon_1,\epsilon_2, \underline{\upsilon})}e_{1}^{i}e_{2}^{j}e_3^{\epsilon_1}
 e_4^{\epsilon_2}t_5^{k}t_6^{l}.$$

Suppose that there exists a minimum negative integer $k_0$ or $l_0$ such that 
$a_{(\epsilon_1,\epsilon_2, i,j,k_0,l)}\neq 0$ or $a_{(\epsilon_1,\epsilon_2, i,j,k,l_0)}\neq 0$  for some  $(\epsilon_1,\epsilon_2, i,j,k_0,l), (\epsilon_1,\epsilon_2, i,j,k,l_0)\in I,$  
and  $a_{(\epsilon_1,\epsilon_2, i,j,k,l)}=0$ whenever $k<k_0$ or $l<l_0.$  Write
 $$x_-=\displaystyle\sum_{\substack{(\epsilon_1, \epsilon_2,\underline{\upsilon})\in I\\
 k_0\leq k \leq -1 \ \text{or} \ l_0\leq l\leq -1}}a_{(\epsilon_1,\epsilon_2, \underline{\upsilon})}e_{1}^{i}e_{2}^{j}e_3^{\epsilon_1}
 e_4^{\epsilon_2}t_5^{k}t_6^{l}.$$ 
  Now, $D(e_3)=\text{ad}_{x_+}(e_3)+\text{ad}_{x_-}(e_3)+
 \delta(e_3)\in A_{\alpha,\beta}.$  From Remark \ref{rrr}, we have that
 $e_3=e_{3,6}-st_5^2t_6^{-1}$ and $e_{3,6}=t_3-at_4t_5^{-1}.$ Putting these two together gives
\begin{align*}
e_3
=t_3-at_4t_5^{-1}-st_5^2t_6^{-1}.
\end{align*} 
Again, from Remark \ref{rrr}, we also have that $t_4=e_4+bt_5^3t_6^{-1}.$ Note, $\delta(t_\kappa)
=\lambda_\kappa t_\kappa, \ \kappa\in \{3,4,5,6\}.$ Now,
 \begin{align}
 \delta(e_3)&=\lambda_3t_3-a(\lambda_4-\lambda_5)t_4t_5^{-1}-s(2\lambda_5-\lambda_6)t_5^2t_6^{-1}\nonumber\\
 &=\lambda_3(e_{3,6}+at_4t_5^{-1})+
 a(\lambda_5-\lambda_4)t_4t_5^{-1}+s(\lambda_6-2\lambda_5)
 t_5^2t_6^{-1}\nonumber\\
 &=\lambda_3e_{3,6}+a(\lambda_3-\lambda_4+\lambda_5)t_4
 t_5^{-1}+s(\lambda_6-2\lambda_5)t_5^2t_6^{-1}\nonumber\\
 &=\lambda_3(e_3+st_5^2t_6^{-1})+
 a(\lambda_3-\lambda_4+\lambda_5)(e_{4}+bt_5^3t_6^{-1})
t_5^{-1}+s(\lambda_6-2\lambda_5)t_5^2t_6^{-1}\nonumber\\
 &=\lambda_3e_3+\alpha_1e_4t_5^{-1}
 +\alpha_2t_5^2t_6^{-1},\label{fe1}
\end{align}  
 where $\alpha_1=a(\lambda_3-\lambda_4+\lambda_5)$ and $\alpha_2=s(\lambda_3-2\lambda_5+\lambda_6)+q^{-3}ab(\lambda_3-\lambda_4+
 \lambda_5).$
Therefore,
$D(e_3)=\text{ad}_{x_+}(e_3)+\text{ad}_{x_-}(e_3)+
 \lambda_3e_3+\alpha_1e_4t_5^{-1}
 +\alpha_2t_5^2t_6^{-1}\in A_{\alpha,\beta}.$  It follows that
 $D(e_3)t_5t_6=\text{ad}_{x_+}(e_3)t_5t_6+\text{ad}_{x_-}(e_3)t_5t_6+
 \lambda_3e_3t_5t_6+
 \alpha_1e_4t_6
 +q^3\alpha_2t_5^3\in A_{\alpha,\beta}.$ Hence, $\text{ad}_{x_-}(e_3)t_5t_6\in A_{\alpha,\beta},$ since 
 $\text{ad}_{x_+}(e_3)t_5t_6+
 \lambda_3e_3t_5t_6+
 \alpha_1e_4t_6
 +q^3\alpha_2t_5^3\in A_{\alpha,\beta}.$
 
Now,
  \begin{align}
 \text{ad}_{x_-}(e_3)&=\displaystyle\sum_{(\epsilon_1, \epsilon_2,\underline{\upsilon})\in I}a_{(\epsilon_1,\epsilon_2, \underline{\upsilon})}e_{1}^{i}e_{2}^{j}e_3^{\epsilon_1}
 e_4^{\epsilon_2}t_5^{k}t_6^{l}
 e_3-
\displaystyle\sum_{(\epsilon_1, \epsilon_2,\underline{\upsilon})\in I}a_{(\epsilon_1,\epsilon_2, \underline{\upsilon})}e_3e_{1}^{i}e_{2}^{j}e_3^{\epsilon_1}
 e_4^{\epsilon_2}t_5^{k}t_6^{l}.\label{c4e5}
 \end{align}
Using Lemma \ref{c3l3}, we have the following:
\begin{align}
t_5^{k}t_6^{l}e_3&=q^{-k}e_3t_5^{k}t_6^{l}+d_2[k]e_4
t_5^{k-1}t_6^{l}+d_3[l]t_5^{k+2}
t_6^{l-1},\label{c4e6}\\
e_3e_1^{i}e_2^{j}&=q^{-i-3j}e_1^{i}e_2^{j}e_3+d_2[i]e_1^{i-1}
e_2^{j+1},\label{c4e7}
\end{align}
 Substitute \eqref{c4e6} and \eqref{c4e7} into \eqref{c4e5}, simplify and multiply (on the right) by $t_5t_6$ to obtain
\begin{align}
\text{ad}_{x_-}(e_3)t_5t_6=\nonumber\\
\sum_{(\epsilon_1, \epsilon_2,\underline{\upsilon})\in I}&a_{(\epsilon_1,\epsilon_2, \underline{\upsilon})}\left(g[i,j,\epsilon_2,l] e_{1}^{i}e_{2}^{j}e_3^{\epsilon_1+1}
e_4^{\epsilon_2}t_5^{k+1}t_6^{l+1}+q^{-3l}d_2[k]e_{1}^{i}e_{2}^{j}e_3^{\epsilon_1}
 e_4^{\epsilon_2+1}t_5^{k}t_6^{l+1}\right.  \nonumber\\
  +&\left.q^{-3(l-1)} d_3[l] e_{1}^{i}e_{2}^{j}e_3^{\epsilon_1}
 e_4^{\epsilon_2}t_5^{k+3}t_6^{l}-q^{-3l} d_2[i]e_{1}^{i-1}e_{2}^{j+1}e_3^{\epsilon_1}
 e_4^{\epsilon_2}t_5^{k+1}t_6^{l+1}\right), \label{c4e8}
\end{align} 
where $g[i,j,\epsilon_2,l]:=q^{-k-3\epsilon_2-3l}-q^{-i-3j-3l}.$ 
 
 Assume that there exists $l<0$ such that $a_{(\epsilon_1,\epsilon_2,\underline{v})}\neq 0.$ It follows from our initial assumption that
 $a_{(\epsilon_1,\epsilon_2,i,j,k,l_0)}\neq 0.$ Now, at $l=l_0,$ denote $\underline{\upsilon}=(i,j,k,l)$ by $\underline{\upsilon}_0:=(i,j,k,l_0).$   
  From \eqref{c4e8}, we have that
 \begin{align*}
 \text{ad}_{x_-}(e_3)t_5t_6=&\displaystyle\sum_{(\epsilon_1, \epsilon_2,\underline{\upsilon}_0)\in I}
 q^{-3(l_0-1)}
 a_{(\epsilon_1,\epsilon_2, \underline{\upsilon}_0)}d_3[l_0] e_{1}^{i}e_{2}^{j}e_3^{\epsilon_1}
 e_4^{\epsilon_2}t_5^{k+3}t_6^{l_0} +\mathcal{J}_1,
 \end{align*}
where $\mathcal{J}_1 \in \text{Span}\left( \mathcal{B}\setminus 
 \{e_1^{i}e_2^{j}e_3^{\epsilon_1}e_4^{\epsilon_2}
 t_5^{k}t_6^{l_0}\mid  \epsilon_1,\epsilon_2\in \{0,1\}, \ k\in\mathbb{Z} \ \text{and} \  i,j\in \mathbb{N}  \}\right) .$ 
 
 Set $\underline{w}:=(i,j,k,l)\in \mathbb{N}^4.$ One can also write $\text{ad}_{x_-}(e_3)t_5t_6\in A_{\alpha,\beta}$ in terms of the basis $\mathcal{B}$ of $A_{\alpha,\beta}$ (Proposition \ref{c3p5}) as:
\begin{align}
 \text{ad}_{x_-}(e_3)t_5t_6=&\sum_{(\epsilon_1,\epsilon_2, \underline{w})\in J}b_{(\epsilon_1,\epsilon_2,\underline{w})}e_1^{i}
e_2^{j}e_3^{\epsilon_1}e_4^{\epsilon_2}
t_5^{k}t_6^{l},\label{c4e10}
 \end{align}
 where $J$ is a finite subset of  $\{0,1\}^2\times \mathbb{N}^4$, and $b_{(\epsilon_1,\epsilon_2,\underline{w})}\in \k.$  It follows that
 $$\sum_{(\epsilon_1,\epsilon_2, \underline{w})\in J}b_{(\epsilon_1,\epsilon_2,\underline{w})}e_1^{i}
e_2^{j}e_3^{\epsilon_1}e_4^{\epsilon_2}
t_5^{k}t_6^{l}= \displaystyle\sum_{(\epsilon_1, \epsilon_2,\underline{\upsilon}_0)\in I}
 q^{-3(l_0-1)}
 a_{(\epsilon_1,\epsilon_2, \underline{\upsilon}_0)}d_3[l_0] e_{1}^{i}e_{2}^{j}e_3^{\epsilon_1}
 e_4^{\epsilon_2}t_5^{k+3}t_6^{l_0} +\mathcal{J}_1.$$
 Since $\mathcal{B}$ is a basis for $A_{\alpha,\beta},$ we deduce from Corollary \ref{c4c2} that $\left(e_1^{i}
e_2^{j}e_3^{\epsilon_1}e_4^{\epsilon_2}
t_5^{k}t_6^{l}\right)_{((\epsilon_1,\epsilon_2, \underline{v})\in \{0,1\}^2\times \mathbb{N}^2 \times \mathbb{Z}^2)}$ is also a basis for $A_{\alpha,\beta}[t_5^{-1}, t_6^{-1}].$ Since $\underline{v}_0=(i,j,k,l_0)\in \mathbb{N}^2\times \mathbb{Z}^2$ (with $l_0<0$) and $\underline{w}=(i,j,k,l)\in \mathbb{N}^4$ (with $l\geq 0$)  in the above equality,  we must have 
 $$q^{-3(l_0-1)}a_{(\epsilon_1,\epsilon_2,\underline{\upsilon}_0)}d_3[l_0]=0.$$
 Given that $q^{-3(l_0-1)}d_3[l_0]\neq 0,$ it follows that $a_{(\epsilon_1,\epsilon_2,\underline{\upsilon}_0)}=a_{(\epsilon_1,\epsilon_2,i,j,k,l_0)}$ are all zero. This is a contradiction. Therefore, $l\geq 0$ (i.e. there is no negative exponent for $t_6$).
 
Since   $l\geq 0,$ it follows from our initial assumption that there exists $k=k_0<0$ such that $a_{(\epsilon_1,\epsilon_2,i,j,k_0,l)}\neq 0.$  The rest of the proof will show that this assumption cannot also hold. 

Set $\underline{\upsilon}_0:=(i,j,k_0,l)\in \mathbb{N}^2\times \mathbb{Z}\times \mathbb{N}.$ From \eqref{c4e8}, we have that
 \begin{align*}
 \text{ad}_{x_-}(e_3)t_5t_6=& \displaystyle\sum_{(\epsilon_1,\epsilon_2, \underline{\upsilon}_0)\in I}q^{-3l} a_{(\epsilon_1,\epsilon_2, \underline{\upsilon}_0)}d_2[k_0]e_{1}^{i}e_{2}^{j}e_3^{\epsilon_1}
 e_4^{\epsilon_2+1}t_5^{k_0}t_6^{l+1}+V,
 \end{align*}
 where $V\in \mathcal{J}_2 := \text{Span}\left( \mathcal{B}\setminus 
 \{e_1^{i}e_2^{j}e_3^{\epsilon_1}e_4^{\epsilon_2}
 t_5^{k_0}t_6^{l}\mid \epsilon_1,\epsilon_2\in \{0,1\} \ \text{and} \ i,j,l\in \mathbb{N} \}\right).$ It follows that:
 \begin{align}
  \text{ad}_{x_-}&(e_3)t_5t_6=\nonumber\\
  &\sum_{(0,0,\underline{\upsilon}_0)\in I}q^{-3l} a_{(0,0,\underline{\upsilon}_0)}d_2[k_0]e_{1}^{i}e_{2}^{j}
e_4t_5^{k_0}t_6^{l+1} 
+\sum_{(1,0, \underline{\upsilon}_0)\in I} a_{(1,0,\underline{\upsilon}_0)}d_2[k_0]e_{1}^{i}e_{2}^{j}e_3
e_4t_5^{k_0}t_6^{l+1}\nonumber\\
 &  +\sum_{(0,1, \underline{\upsilon}_0)\in I}q^{-3l} a_{(0,1,\underline{\upsilon}_0)}d_2[k_0]e_{1}^{i}e_{2}^{j}
e_4^2t_5^{k_0}t_6^{l+1}\sum_{(1,1, \underline{\upsilon}_0)\in I} a_{(1,1,\underline{v}_0)}d_2[k_0]e_{1}^{i}e_{2}^{j}e_3
 e_4^2t_5^{k_0}t_6^{l+1}+V.\label{c4e11}
 \end{align}
 Write the relations in Lemma \ref{c3l2}(2),(4) as:
\begin{align}
e_4^2=&b_1\beta+b_2e_2e_4e_6
+b_4\alpha e_3e_6+b_6e_1e_3e_4e_6
+L_1,\label{c4e12}\\
e_3e_4^2=&
\beta b_1e_3+k_1e_2e_3
e_4e_6
+k_3\alpha^2 e_6+k_5\alpha e_1e_4
e_6
+k_{14}\beta e_1^2e_6
\nonumber\\&+k_{15}e_1^2
e_2e_4e_6^2
+k_{17}\alpha e_1^2e_3e_6^2+
k_{19} e_1^3e_3e_4e_6^2+
L_2,\label{c4e13}
\end{align}
where $L_1$ and $L_2$ are some elements of the left ideal  $A_{\alpha,\beta}t_5\subseteq \mathcal{J}_2.$ 
Substitute \eqref{c4e12} and \eqref{c4e13} into  \eqref{c4e11}, and simplify to obtain:

 \begin{align}
\text{ad}_{x_-}(e_3)t_5t_6= \sum&[\lambda_{1,1}\beta a_{(0,1,i,j,k_0,l-1)}+\lambda_{1,2}\alpha^2a_{(1,1, i,j,k_0,l-2)}\nonumber\\
&+\lambda_{1,3}\beta a_{(1,1, i-2,j,k_0,l-2)}]e_1^ie_2^jt_5^{k_0}t_6^l\nonumber \\
+&\sum[\lambda_{2,1}\alpha a_{(0,1, i,j,k_0,l-2)}  
+\lambda_{2,2}\beta a_{(1,1, i,j,k_0,l-1)}\nonumber\\&+\lambda_{2,3}\alpha a_{(1,1,i-2,j,k_0,l-3)}]e_1^ie_2^je_3t_5^{k_0}t_6^l\nonumber\\
 +&\sum[\lambda_{3,1}a_{(0,1, i,j-1,k_0,l-2)}  
+\lambda_{3,2}\alpha a_{(1,1, i-1,j,k_0,l-2)}\nonumber\\
                        &+\lambda_{3,3}a_{(1,1, i-2,j-1,k_0,l-3)}+\lambda_{3,4}a_{(0,0,i,j,k_0,l-1)}]e_1^ie_2^je_4t_5^{k_0}t_6^l\nonumber\\
+&\sum[\lambda_{4,1}a_{(0,1,i-1,j,k_0,l-2)}  
+\lambda_{4,2}a_{(1,1,i,j-1,k_0,l-2)}\nonumber\\
&+\lambda_{4,3}a_{(1,1,i-3,j,k_0,l-3)}
+\lambda_{4,4}a_{(1,0,i,j,k_0,l-1)}]e_1^ie_2^je_3e_4t_5^{k_0}t_6^l+V',
\label{c4e14}
 \end{align}
where $V'\in \mathcal{J}_2$.  Also, $\lambda_{s,t}:=\lambda_{s,t}(j,k_0,l)$ are some families of scalars which are non-zero for all $s,t\in\{1,2,3,4\}$ and $j,l\in\mathbb{N}$, except 
 $\lambda_{1,4}$ and $\lambda_{2,4}$ which are assumed to be zero since they do not exist in the above expression.  Note, although each  $\lambda_{s,t}$ depends on $j,k_0,l,$ we have not made this dependency explicit in the above expression since the minimum requirement we need to complete the proof is for all the $\lambda_{s,t}$ existing in the above expression to be non-zero, which we already have.
   
Observe that \eqref{c4e14} and \eqref{c4e10} are equal, hence,
\begin{align*}
\sum_{(\epsilon_1,\epsilon_2, \underline{w})\in J}b_{(\epsilon_1,\epsilon_2,\underline{w})}e_1^{i}
e_2^{j}e_3^{\epsilon_1}e_4^{\epsilon_2}
t_5^{k}t_6^{l} =& \sum[\lambda_{1,1}\beta a_{(0,1,i,j,k_0,l-1)}+\lambda_{1,2}\alpha^2a_{(1,1, i,j,k_0,l-2)}\nonumber\\
&+\lambda_{1,3}\beta a_{(1,1, i-2,j,k_0,l-2)}]e_1^ie_2^jt_5^{k_0}t_6^l\nonumber \\
+&\sum[\lambda_{2,1}\alpha a_{(0,1, i,j,k_0,l-2)}  
+\lambda_{2,2}\beta a_{(1,1, i,j,k_0,l-1)}\nonumber\\&+\lambda_{2,3}\alpha a_{(1,1,i-2,j,k_0,l-3)}]e_1^ie_2^je_3t_5^{k_0}t_6^l\nonumber\\
 +&\sum[\lambda_{3,1}a_{(0,1, i,j-1,k_0,l-2)}  
+\lambda_{3,2}\alpha a_{(1,1, i-1,j,k_0,l-2)}\nonumber\\
                        &+\lambda_{3,3}a_{(1,1, i-2,j-1,k_0,l-3)}+\lambda_{3,4}a_{(0,0,i,j,k_0,l-1)}]e_1^ie_2^je_4t_5^{k_0}t_6^l\nonumber\\
+\sum&[\lambda_{4,1}a_{(0,1,i-1,j,k_0,l-2)}  
+\lambda_{4,2}a_{(1,1,i,j-1,k_0,l-2)}\nonumber\\
+\lambda_{4,3}&a_{(1,1,i-3,j,k_0,l-3)}
+\lambda_{4,4}a_{(1,0,i,j,k_0,l-1)}]e_1^ie_2^je_3e_4t_5^{k_0}t_6^l+V'.
\end{align*}
We have previously established that $\left(e_1^{i}
e_2^{j}e_3^{\epsilon_1}e_4^{\epsilon_2}
t_5^{k}t_6^{l}\right)_{((\epsilon_1,\epsilon_2, \underline{v})\in \{0,1\}^2\times \mathbb{N}^2 \times \mathbb{Z}^2)}$ is a basis for $A_{\alpha,\beta}[t_5^{-1}, t_6^{-1}]$ (note, in this part of the proof $l\geq 0$). 
 Since $\underline{v}_0=(i,j,k_0,l)\in \mathbb{N}^2\times \mathbb{Z}\times \mathbb{N}$ (with $k_0<0$) and $\underline{w}=(i,j,k,l)\in \mathbb{N}^4$ (with $k\geq 0$)  in the above equality, it follows  that
\begin{align}
\lambda_{1,1}&\beta a_{(0,1, i,j,k_0,l-1)}+\lambda_{1,2}\alpha^2 a_{(1,1, i,j,k_0,l-2)}
+\lambda_{1,3}\beta a_{(1,1, i-2,j,k_0,l-2)}=0,\label{c4e15}\\
\lambda_{2,1}&\alpha a_{(0,1, i,j,k_0,l-2)}  
+\lambda_{2,2}\beta a_{(1,1, i,j,k_0,l-1)}+\lambda_{2,3}\alpha a_{(1,1, i-2,j,k_0,l-3)}
=0,\label{c4e16}\\
\lambda_{3,1}&a_{(0,1, i,j-1,k_0,l-2)}  
+\lambda_{3,2}\alpha a_{(1,1, i-1,j,k_0,l-2)}+\lambda_{3,3}a_{(1,1, i-2,j-1,k_0,l-3)}\nonumber\\&+\lambda_{3,4}a_{(0,0, i,j,k_0,l-1)}=0,\label{c4e17}\\
\lambda_{4,1}&a_{(0,1, i-1,j,k_0,l-2)}  
+\lambda_{4,2}a_{(1,1, i,j-1,k_0,l-2)}+\lambda_{4,3}a_{(1,1,i-3,j,k_0,l-3)}\nonumber\\
&+\lambda_{4,4}a_{(1,0, i,j,k_0,l-1)}=0.\label{c4e18}
\end{align}   
From \eqref{c4e15} and \eqref{c4e16}, one can easily deduce that
\begin{align}
&a_{(0,1, i,j,k_0,l)}=-\frac{\alpha^2\lambda_{1,2}}{\beta\lambda_{1,1}}a_{(1,1, i,j,k_0,l-1)}
-\frac{\lambda_{1,3}}{\lambda_{1,1}}a_{(1,1, i-2,j,k_0,l-1)},\label{c4e19}\\
&  
a_{(1,1, i,j,k_0,l)}=-\frac{\alpha\lambda_{2,1}}{\beta\lambda_{2,2}}a_{(0,1, i,j,k_0,l-1)}-\frac{\alpha\lambda_{2,3}}{\beta\lambda_{2,2}}a_{(1,1, i-2,j,k_0,l-2)}.\label{c4e20}
\end{align}
Note,  $a_{(\epsilon_1,\epsilon_2, i,j,k_0,l)}:=0$ whenever $i<0$ or $j<0$ or $l<0$ for all $\epsilon_1,\epsilon_2\in \{0,1\}.$ 

\textbf{Claim.} The coefficients
$a_{(0,1, i,j,k_0,l)}$ and $a_{(1,1, i,j,k_0,l)}$ are all zero for all $l\geq 0$. We now justify the claim by an induction on $l.$ From  \eqref{c4e19} and \eqref{c4e20}, the result is obviously true when $l=0.$ For $l\geq 0,$ assume that $a_{(0,1, i,j,k_0,l)}=a_{(1,1, i,j,k_0,l)}=0.$ Then, it follows from  \eqref{c4e19} and \eqref{c4e20} that
$$a_{(0,1, i,j,k_0,l+1)}=-\frac{\alpha^2\lambda_{1,2}}{\beta\lambda_{1,1}}a_{(1,1, i,j,k_0,l)}
-\frac{\lambda_{1,3}}{\lambda_{1,1}}a_{(1,1, i-2,j,k_0,l)},$$
$$a_{(1,1, i,j,k_0,l+1)}=-\frac{\alpha\lambda_{2,1}}{\beta\lambda_{2,2}}a_{(0,1,i,j,k_0,l)}-\frac{\alpha\lambda_{2,3}}{\beta\lambda_{2,2}}a_{(1,1, i-2,j,k_0,l-1)}.$$
From the inductive hypothesis, $a_{(1,1,i,j,k_0,l)}=a_{(1,1,i-2,j,k_0,l)}=a_{(0,1,i,j,k_0,l)}=a_{(1,1, i-2,j,k_0,l-1)}=0.$ 
Hence, $a_{(1,1, i,j,k_0,l+1)}=a_{(0,1,i,j,k_0,l+1)}=0.$  By the principle of mathematical induction,  $a_{(0,1, i,j,k_0,l)}=a_{(1,1,i,j,k_0,l)}=0$ for all $l\geq 0$ as desired.
Given that the families $a_{(0,1, i,j,k_0,l)}$ and $a_{(1,1, i,j,k_0,l)}$ are all zero, it follows from \eqref{c4e17} and \eqref{c4e18} that $a_{(0,0, i,j,k_0,l)}$ and 
$a_{(1,0, i,j,k_0,l)}$ are also zero for all $(i,j,k_0,l)\in \mathbb{N}^2\times\mathbb{Z}\times \mathbb{N}.$  Since $a_{(\epsilon_1,\epsilon_2, i,j,k_0,l)}$ are all zero, it contradicts
our assumption. Hence, $x_-=0.$  Consequently, $x=x_+\in A_{\alpha,\beta}$ as desired.

2. From Remark \ref{rrr}, we have
$e_4
=t_4-bt_5^3t_6^{-1}.$  Again, from Lemma \ref{ev5}, we have that $\lambda_4=3\lambda_3+\lambda_5$ and $\lambda_6=-3\lambda_3.$ Therefore,
\begin{align*}
\delta(e_4)
&=\lambda_4t_{4}-b(3\lambda_5-\lambda_6)t_5^3
t_6^{-1}\\&
=(3\lambda_3+\lambda_5)e_{4,6}-3b(\lambda_3+\lambda_5)t_5^3
t_6^{-1}\\&
=(3\lambda_3+\lambda_5)(e_4+bt_5^3t_6^{-1})-3b(\lambda_3+\lambda_5)t_5^3
t_6^{-1}\\
&=(3\lambda_3+\lambda_5)e_4-2b\lambda_5t_5^3t_6^{-1}.
\end{align*}
Moreover, 
$D(e_4)=\text{ad}_x(e_4)+\delta(e_4)
=\text{ad}_x(e_4)+(3\lambda_3+\lambda_5)e_4-2b\lambda_5t_5^3t_6^{-1}\in A_{\alpha,\beta}.$
It follows that
$b\lambda_5t_5^3t_6^{-1}\in A_{\alpha,\beta},$ since 
$\text{ad}_x(e_4)+(3\lambda_3+\lambda_5)e_4\in A_{\alpha,\beta}.$ Consequently, $
b\lambda_5t_5^3\in A_{\alpha,\beta}t_6.$ Since $b\neq 0$ (Appendix \ref{appenc}), we must have  $\lambda_5
=0,$ otherwise, there will be a contradiction using the basis of $A_{\alpha,\beta}$ (Proposition \ref{c3p5}).  Therefore, $\delta(e_4)
=3\lambda_3e_4$ and $\delta(t_5)
=0.$ We already know from Lemma \ref{ev5} that $\delta(t_6)
=-3\lambda_3t_6$. 
From \eqref{fe1}, we have  that
$\delta(e_3)=\lambda_3e_3+a(\lambda_3-\lambda_4+\lambda_5)e_4t_5^{-1}
+[s(\lambda_3-2\lambda_5+\lambda_6)+q^{-3}ab(\lambda_3-\lambda_4+\lambda_5)]t_5^2t_6^{-1}.$  Given that $\lambda_4=3\lambda_3, \ \lambda_5=0$ and $\lambda_6=-3\lambda_3,$ we have that $\delta(e_3)=\lambda_3e_3-2a\lambda_3e_4t_5^{-1}$ (note, from Appendix \ref{appenc}, one can confirm that $q^{-3}ab+s=0$).
Now, $D(e_3)=\text{ad}_x(e_3)+\delta(e_3)=\text{ad}_x(e_3)+\lambda_3e_3-2a\lambda_3e_4t_5^{-1}
\in A_{\alpha,\beta}.$ Observe that $\text{ad}_x(e_3)+\lambda_3e_3\in A_{\alpha,\beta}.$ Hence, $2a\lambda_3e_4t_5^{-1}
\in A_{\alpha,\beta}, \ \text{and so} \ 2a\lambda_3e_4
\in A_{\alpha,\beta}t_5.$ Since $a\neq 0,$ it implies that $\lambda_3=0,$ otherwise, there will be a contradiction using the basis of $A_{\alpha,\beta}.$ We now have that
$\delta(e_3)=\delta(e_4)=\delta(e_5)=\delta(e_6)=0.$ We finish the proof by showing that 
$\delta(e_1)=\delta(e_2)=0.$ Recall from \eqref{e2e} that
$$e_2e_4e_6+be_2e_5^3+be_3^3e_6+b'e_3^2e_5^2+c'e_3e_4e_5+d'e_4^2=\beta.$$ Apply $\delta$ to this relation to obtain $\delta(e_2)e_4e_6+b\delta(e_2)e_5^3=0.$ This implies that
$\delta(e_2)(e_4e_6+be_5^3)=0.$ Since $e_4e_6+be_5^3\neq 0,$ it follows that $\delta(e_2)=0.$ Similarly, from \eqref{e1e}, we have that
$$e_1e_3e_5+ae_1e_4+ae_2e_5+a'e_3^2=\alpha.$$ Apply $\delta$ to this relation to obtain
$\delta(e_1)(e_3e_5+ae_4)=0.$ Since $e_3e_5+ae_4\neq 0,$ we have that $\delta(e_1)=0.$
In conclusion, $\delta(e_\kappa)=0$ for all $\kappa\in \{1,\cdots, 6\}.$
 
3. As a result of (1) and (2), we have that $D(e_\kappa)=\text{ad}_x(e_\kappa).$ Therefore, $D=\text{ad}_x$ as desired.
\end{proof}


Using similar techniques, one can describe the derivations of $A_{\alpha,0}$ and $A_{0,\beta}$. 
Details can be found in \cite{io-thesis}. There are fundamental differences in these two cases. Indeed, there exist in both cases derivations which are not inner. More precisely, one can check that the linear map $\theta$ of $A_{\alpha,0}$ defined by
$$\theta(e_1)=-e_1, \ \ \theta(e_2)=-e_2, \ \ \theta(e_3)=0, \ \ \theta(e_4)=e_4, \ \ \theta(e_5)=e_5, \ \  \ \theta(e_6)=2e_6$$
is a  $\k-$derivation of $A_{\alpha,0}$.

Similarly, the linear map $\tilde{\theta}$ of $A_{0,\beta}$ by
$$\tilde{\theta}(e_1)=-2e_1, \ \ \tilde{\theta}(e_2)=-3e_2, \ \ \tilde{\theta}(e_3)=-e_3, \ \ \tilde{\theta}(e_4)=0, \ \ \tilde{\theta}(e_5)=e_5, \ \  \ \tilde{\theta}(e_6)=3e_6$$
is a $\k-$derivation of $A_{0,\beta}$.  
 
We summarize our main results in the theorem below.

 \begin{theorem}
Given that 
$A_{\alpha,\beta}=U_q^+(G_2)/\langle \Omega_1-\alpha,\Omega_2-\beta\rangle,$ with $(\alpha,\beta)\in \k^2\setminus \{(0,0)\},$  we have the following results:
\begin{itemize}
\item[1.]  if $\alpha, \beta\neq 0;$ then every derivation $D$ of $A_{\alpha,\beta}$ can uniquely be written as $D=\text{ad}_x,$ where $x\in A_{\alpha,\beta}.$
\item[2.]if $\alpha\neq 0$ and $\beta=0,$ then every derivation $D$ of $A_{\alpha,0}$ can uniquely be written as $D=\text{ad}_x+\lambda\theta,$ where $\lambda\in \k$ and $x\in A_{\alpha,0}.$
 \item[3.] if $\alpha=0$ and $\beta\neq 0,$ then every derivation $D$ of $A_{0,\beta}$ can uniquely be written as $D=\text{ad}_x+\lambda\tilde{\theta},$ where $\lambda\in \k$ and $x\in A_{0,\beta}.$
\item[4.] $HH^1(A_{\alpha,0})=\k[\theta]$ and $HH^1(A_{0,\beta})=\k[\tilde{\theta}],$  where
 $[\theta]$ and $[\tilde{\theta}]$ respectively denote the classes of $\theta$ and $\tilde{\theta}$  modulo the space of inner derivations. 
\item[5.] if $\alpha,\beta\neq 0;$ then $HH^1(A_{\alpha,\beta})=\{[0]\},$ 
 where $[0]$ denotes the class of $0$   modulo the space of inner derivations.
\end{itemize}
 \end{theorem}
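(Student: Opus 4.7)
The plan is to assemble Lemmas \ref{c4l1}, \ref{ev5}, and \ref{ev19} to obtain part (1), then adapt the same tower-and-localization strategy to the degenerate cases $\alpha=0$ or $\beta=0$ for parts (2) and (3), and finally combine these with standard facts about $HH^1$ for parts (4) and (5).

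First, for part (1), given any $D\in \der(A_{\alpha,\beta})$ with $\alpha,\beta\neq 0$, I would extend $D$ uniquely to a derivation of the quantum torus $\cR_3$ via the chain of localizations \eqref{c4ee1}, then apply \cite[Corollary 2.3]{op} to write $D=\mathrm{ad}_x+\delta$ with $x\in \cR_3$ and $\delta$ a scalar derivation determined by constants $\lambda_3,\lambda_4,\lambda_5,\lambda_6\in \k$. Lemma \ref{c4l1} descends $x$ to $\cR_4$ and yields $\lambda_5=\lambda_4+\lambda_6$; Lemma \ref{ev5} descends $x$ to $\cR_5$ and yields $\lambda_4=3\lambda_3+\lambda_5$ and $\lambda_6=-3\lambda_3$; Lemma \ref{ev19} descends $x$ to $A_{\alpha,\beta}=\cR_7$ and forces $\lambda_3=\lambda_5=0$, whence $\delta=0$. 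Uniqueness of $x$ modulo a scalar follows because $\mathrm{ad}_x=\mathrm{ad}_y$ gives $x-y\in Z(A_{\alpha,\beta})=\k$ by Lemma \ref{ev25}.

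Next, for parts (2) and (3), I would first verify that the proposed linear maps $\theta$ and $\tilde{\theta}$ are genuinely derivations by checking compatibility with every defining relation of $A_{\alpha,0}$ (resp.\ $A_{0,\beta}$): the quantum Serre-type relations and the quadratic central relations \eqref{e1e}--\eqref{e2e} specialized at $\beta=0$ (resp.\ $\alpha=0$). This amounts to confirming that each relation is homogeneous with respect to the weight assignment prescribed by $\theta$ (resp.\ $\tilde{\theta}$), a routine calculation. I would then rerun the three-step tower argument under the degenerate hypothesis. The point is that several of the vanishing arguments in Lemmas \ref{c4l1}--\ref{ev19} used $\alpha\neq 0$ or $\beta\neq 0$ essentially: for instance, the step in Lemma \ref{ev19} that deduced $\lambda_5=0$ from $b\lambda_5 t_5^3\in A_{\alpha,\beta}t_6$ crucially invoked $\beta\neq 0$. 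When $\beta=0$, that particular constraint drops, and tracking which subsequent constraints cascade and which do not, one finds that exactly a one-parameter family of scalar derivations survives, matching $\lambda\theta$ (resp.\ $\lambda\tilde{\theta}$).

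For part (5), since every derivation is inner by part (1), $HH^1(A_{\alpha,\beta})=\{[0]\}$ is immediate. For part (4), it remains to show $\theta$ (resp.\ $\tilde{\theta}$) is not inner. Since $\theta$ is $\ch$-equivariant of weight zero and acts on the $\ch$-eigenvector $e_6$ by multiplication by the non-zero scalar $2$, while every inner derivation $\mathrm{ad}_x$ with $x$ an $\ch$-eigenvector of weight zero satisfies $\mathrm{ad}_x(e_6)=[x,e_6]$, one obtains a contradiction by writing a hypothetical $x$ in the PBW basis $\mathcal{B}$ of Proposition \ref{c3p5} and comparing the leading terms of $[x,e_6]$ with $2e_6$; this rules out $\theta=\mathrm{ad}_x$. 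The same argument for $\tilde{\theta}(e_6)=3e_6$ handles $A_{0,\beta}$. Combined with parts (2) and (3), this gives $HH^1(A_{\alpha,0})=\k[\theta]$ and $HH^1(A_{0,\beta})=\k[\tilde{\theta}]$.

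The principal obstacle is the second paragraph: one must carefully re-examine each step of the proofs of Lemmas \ref{c4l1}--\ref{ev19}, separating the constraints that survive the specialization from those that collapse, to confirm that the surviving freedom matches the prescribed weights of $\theta$ and $\tilde{\theta}$ exactly, neither more nor less.
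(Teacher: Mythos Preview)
Your proposal is essentially correct and matches the paper's approach: part (1) is exactly the assembly of Lemmas \ref{c4l1}, \ref{ev5}, \ref{ev19}, and for parts (2)--(4) the paper itself defers the detailed verification to \cite{io-thesis}, merely asserting that ``using similar techniques'' the degenerate cases go through with the surviving one-parameter family of scalar derivations given by $\theta$ and $\tilde\theta$.

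One small inaccuracy worth flagging: your illustrative example of where $\beta\neq 0$ is essential is misplaced. The deduction $\lambda_5=0$ in Lemma \ref{ev19}(2) from $b\lambda_5 t_5^3\in A_{\alpha,\beta}t_6$ uses only the basis of Proposition \ref{c3p5}, which holds for all $(\alpha,\beta)\neq(0,0)$ and does not require $\beta\neq0$ per se. The places where $\beta\neq0$ is genuinely consumed are earlier: in Lemma \ref{c4l1}(2), the constraint $\lambda_5=\lambda_4+\lambda_6$ arises by comparing the $\alpha t_5^{-1}t_3^{-1}$ and $a\beta t_6^{-1}t_4^{-1}t_3^{-1}$ terms, and the latter vanishes when $\beta=0$; in Lemma \ref{ev5}(2), both the derivation of $\lambda_4=3\lambda_3+\lambda_5$ (via $wbt_3^3\in\cR_5 t_4$, which becomes automatic once the $\beta t_6^{-1}$ term in \eqref{de3} drops) and the derivation of $\lambda_6=-3\lambda_3$ (which explicitly divides by $\beta$) collapse. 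It is the loss of these upstream constraints that leaves the one-parameter freedom; the computations in Lemma \ref{ev19} then have to be redone with fewer relations among the $\lambda_\kappa$ available, and it is there that the surviving scalar derivation is pinned down to be proportional to $\theta$. Your overall strategy for (2)--(3) is right, but when you carry it out you should trace the failure to Lemmas \ref{c4l1} and \ref{ev5} rather than to Lemma \ref{ev19}.
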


The above theorem shows that $A_{\alpha,\beta}$ when both $\alpha$ and $\beta$ are nonzero shares a number of properties with the second Weyl algebra over $\k$: it is simple, units are reduced to scalars, and all derivations are inner. 

It would be interesting to compute the automorphism group of these algebras and verify if all endomorphisms are automorphisms, i.e. an analogue of the celebrated Dixmier Conjecture \cite{dj2}. 

In general, the present work and \cite{sl} suggest that the primitive quotients of $U_q^+(\mathfrak{g})$ by primitive ideals from the 0-stratum provide algebras that could (should?) be regarded (and studied) as quantum analogue of Weyl algebras.

\appendix

\section{Some general relations of $U_q^+(G_2)$} 
\label{sc}

We have the following selected general relations of $U_q^+(G_2).$

\begin{lemma}
\label{c3l3}
For any $n\in\mathbb{N}$, we have that: 

1(a) $E_jE_i^n=q^{-3n}E_i^{n}E_j$ \quad (b) $E_j^nE_i=q^{-3n}E_iE_j^n$ for all $1\leq i,j\leq 6,$ with 
$j-i=1.$\vspace{0.15in}

2(a) $E_6E_4^n=q^{-3n}E_4^nE_6+d_1[n]E_4^{n-1}
E_5^3$ \quad (b) $E_6^nE_4=q^{-3n}E_4E_6^n+d_1[n]
E_5^3E_6^{n-1}$

\ \ (c) $E_4E_2^n=q^{-3n}E_2^nE_4+d_1[n]E_2^{n-1}E_3^3$ \quad (d) $E_4^nE_2=q^{-3n}E_2E_4^n+
d_1[n]E_3^3E_4^{n-1},$

where $d_1[n]=q^{3(1-n)}d_1[1]\left( \dfrac{1-q^{-6n}}{1-q^{-6}}\right);$ $d_1[1]=-\dfrac{q^4-2q^2+1}{q^4+q^2+1}$ and $d_1[0]:=0.$
 \vspace{0.15in}
 
3(a) $E_3E_1^n=q^{-n}E_1^nE_3+d_2[n]
E_1^{n-1}E_2$ \quad (b) $E_3^nE_1=q^{-n}E_1E_3^n+d_2[n]E_2E_3^{n-1}$

\ \ (c) $E_5E_3^n=q^{-n}E_3^nE_5+d_2[n]
E_3^{n-1}E_4$ \quad (d) $E_5^nE_3=q^{-n}E_3E_5^n+d_2[n]E_4E_5^{n-1},$

where $d_2[n]=q^{1-n}d_2[1]\left( \dfrac{1-q^{-2n}}{1-q^{-2}}\right);$  $d_2[1]=-(q+q^{-1}+q^{-3})$ and $d_2[0]:=0.$ \vspace{0.15in}

4(a) $E_6^nE_3=E_3E_6^n+d_3[n]E_5^2E_6^{n-1}$ \quad (b) $E_5E_2^n=E_2^nE_5+d_3[n]E_2^{n-1}E_3^2,$

where $d_3[n]=d_3[1]\left( \dfrac{1-q^{-6n}}{1-q^{-6}}\right);$ $d_3[1]=1-q^2$ and
$d_3[0]:=0.$
\end{lemma}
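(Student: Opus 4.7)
The plan is to prove each family of identities by induction on $n$, with the base case reducing to one of the defining relations of $\uq$ and the inductive step being a short computation. The non-trivial content is really just bookkeeping of $q$-exponents and verifying that the stated closed forms for $d_1[n], d_2[n], d_3[n]$ satisfy the recurrences that emerge.

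First I would dispatch Part 1. For $j-i=1$ the defining relations give $E_j E_i = q^{-3} E_i E_j$. Assuming 1(a) for $n$, we have $E_j E_i^{n+1} = (E_j E_i^n) E_i = q^{-3n} E_i^n (E_j E_i) = q^{-3(n+1)} E_i^{n+1} E_j$, and 1(b) is symmetric. This step is essential because Parts 2, 3, 4 rely on being able to push powers of $E_5$ past $E_4$, of $E_3$ past $E_2$, etc. From $E_5 E_4 = q^{-3} E_4 E_5$ one iterates to obtain the auxiliary identities $E_5^3 E_4 = q^{-9} E_4 E_5^3$, $E_3^3 E_2 = q^{-9} E_2 E_3^3$, $E_5 E_3^{n-1} = q^{-(n-1)} E_3^{n-1} E_5 + \text{(lower)}$, $E_6 E_5^2 = q^{-6} E_5^2 E_6$, and so on, all of which are immediate consequences of Part 1 (or of already-established cases of Part 3).

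Next I would handle the $d_1$ identities (Part 2) uniformly, illustrating with 2(a). The base case $n=1$ is the defining relation $E_6 E_4 = q^{-3} E_4 E_6 + d_1[1] E_5^3$ (by definition of $d_1[1]$). For the inductive step, compute
\[
E_6 E_4^{n+1} = (E_6 E_4^n) E_4 = q^{-3n} E_4^n (E_6 E_4) + d_1[n] E_4^{n-1} (E_5^3 E_4),
\]
and substitute $E_6 E_4 = q^{-3} E_4 E_6 + d_1[1] E_5^3$ together with $E_5^3 E_4 = q^{-9} E_4 E_5^3$ to obtain
\[
E_6 E_4^{n+1} = q^{-3(n+1)} E_4^{n+1} E_6 + \bigl(q^{-3n} d_1[1] + q^{-9} d_1[n]\bigr) E_4^n E_5^3.
\]
So it remains to verify that $d_1[n+1] = q^{-3n} d_1[1] + q^{-9} d_1[n]$. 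Substituting the proposed closed form and using $(1-q^{-6}) + q^{-6}(1-q^{-6n}) = 1 - q^{-6(n+1)}$ confirms this in one line. Parts 2(b)--(d) are essentially identical: (b) inducts by multiplying on the left by $E_6$; (c) and (d) are obtained by replacing the pair $(E_6,E_4,E_5)$ with $(E_4,E_2,E_3)$ and invoking the parallel auxiliary identity $E_3^3 E_2 = q^{-9} E_2 E_3^3$.

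Parts 3 and 4 follow the same template. For 3(a), the base case is the defining relation $E_3 E_1 = q^{-1} E_1 E_3 + d_2[1] E_2$, and combining with $E_2 E_1 = q^{-3} E_1 E_2$ in the inductive step produces the recurrence $d_2[n+1] = q^{-n} d_2[1] + q^{-3} d_2[n]$, which the closed formula satisfies via $(1-q^{-2}) + q^{-2}(1-q^{-2n}) = 1 - q^{-2(n+1)}$. Parts 3(b)--(d) are the symmetric/parallel versions. For 4(a), the base case is $E_6 E_3 = E_3 E_6 + d_3[1] E_5^2$; combined with $E_6 E_5^2 = q^{-6} E_5^2 E_6$ (from Part 1) the inductive step yields $d_3[n+1] = d_3[1] + q^{-6} d_3[n]$, and the closed form matches. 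Part 4(b) is analogous with $(E_6,E_3,E_5)$ replaced by $(E_5,E_2,E_3)$.

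The whole argument is straightforward once ordered correctly; the only ``obstacle'' is that Parts 2--4 cannot be proved in isolation, because each inductive step needs an auxiliary commutation identity supplied by Part 1 (and, in some places, by previously established cases of Part 3 --- for instance, pushing $E_5$ past $E_3^{n-1}$ in proving 3(d)). So I would prove the parts in the order 1, then 3, then 2, then 4, so that every inductive step has the commutations it needs already in hand.
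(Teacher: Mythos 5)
Your proposal is correct and is exactly the argument the paper has in mind: the paper's proof of this lemma is simply ``This is an easy proof by induction, left to the reader,'' and your induction on $n$ --- with base cases given by the defining relations, the auxiliary $q$-commutations from Part 1, and verification that the closed forms satisfy the recurrences $d_1[n+1]=q^{-3n}d_1[1]+q^{-9}d_1[n]$, $d_2[n+1]=q^{-n}d_2[1]+q^{-3}d_2[n]$, $d_3[n+1]=d_3[1]+q^{-6}d_3[n]$ --- supplies precisely the omitted details. The recurrences and coefficient identities all check out.
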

\begin{proof} This is an easy proof by induction, left to the reader. 
\end{proof}

\section{Definition of parameters used throughout}
\label{appenc}
In this appendix, we define parameters used in this article. 
Note, for all $n\in \mathbb{N},$ we have already defined the scalars $d_2[n]$ in Lemma \ref{c3l3}, hence, we are not going to repeat them here. Any other scalars not defined here must be defined in/before the context in which it is found.  
\begin{align*}
a&=\dfrac{q^2+1+q^{-2}}{q^{-2}-1}& b&=-\dfrac{q^7-2q^5+q^3}{(q^4+q^2+1)(1-q^{-6})}\\
g&=\dfrac{q+q^{-1}+q^{-3}}{(1-q^{-2})^2}&f&=\dfrac{1-q^2}{1-q^{-2}}\\
h&=\dfrac{q+q^{-1}}{q^{-2}-1}&s&=\dfrac{1-q^2}{1-q^{-6}}\\
t&=\dfrac{q^{-1}-q}{1-q^{-6}}&u&=\frac{q+q^{-1}-q^{-3}}{1-q^{-6}}\\
p&=\frac{q^4+q^2+1}{q^2-1}&r&=\frac{-1}{1-q^{-6}}\\
e&=\dfrac{-(q^7+q^5+q^3)}{q^4-2q^2+1}&q''&=\dfrac{q^7-2q^5+q^3}{q^4+q^2+1}\\
n&=\dfrac{q^{12}}{(q^4+q^2+1)^3}&q'&=-(q^2+1+q^{-2})\\
k_1&=q^{-3}b_2+b_6d_2[1]&a'&=af+hq=\frac{q^6}{q^2-1}\\
k_2&=q^{-3}b_3+b_{12}d_2[1]&b'&=\frac{q^{13}-q^{11}}{(q^4+q^2+1)^2}\\
k_3&=b_4c_1  &d'&=\frac{q^{12}}{q^6-1}\\
k_4&=b_4c_2+q^{-3}b_5c_1+b_7d_2[1]  & c'&=-\frac{q^9}{q^4+q^2+1}\\
k_5&=b_4c_2+q^{-1}b_6c_1 &c_1&=\frac{1}{a'}\\
k_6&=c_3b_4+q^{-1}b_7+q^{-3}b_4b_{13}c_2& c_2&=-ac_1\\
k_7&=q^{-3}c_2b_5+b_8d_2[1]  & c_3&=-c_1\\
k_{8}&=b_1b_{13}c_2  &b_1&=\frac{1}{d'}\\
k_9&=q^{-4}b_6c_2+b_9d_2[2]+q^{-3}b_2c_2b_{13}+q^{-3}b_5c_2& b_2&=b_1bc_2(q+q^{-1}+q^{-3})-b_1\\
k_{10}&=q^{-1}b_6b_8c_2 &b_3&=-b'b_1c_2-bb_1\\
k_{11}&=b_{13}c_1  &
b_4&=-b_1bc_1\\
k_{12}&=q^{-1}b_6b_7c_2 &
b_5&=b_1b(c_3(q+q^{-1}+q^{-3})-q^{-3}c_2)\\
k_{13}&=q^{-4}b_6c_3+q^{-2}b_9+q^{-3}b_6b_{13}c_2+q^{-1}b_6b_{13}c_2&b_6&=-q^{-1}c_2b_1b\\
k_{14}&= q^{-1}b_1b_6c_2 &b_7&=-q^{-1}b_1bc_1c_3\\
k_{15}&=q^{-1}b_2b_6c_2 & b_8&=b_9=-q^{-1}b_1bc_2c_3\\
k_{16}&=q^{-1}b_3b_6c_2+q^{-2}b_{10}c_2+q^{-3}b_8b_{13}c_2 &b_{10}&=-q^{-1}c_3^2b_1b \\
k_{17}&=q^{-1}b_4b_6c_2   &b_{11}&=-b'b_1c_1\\
k_{18}&= q^{-1}b_5b_6c_2 &b_{12}&=-b'b_1c_3\\
k_{19}&=q^{-1}b_6^2c_2   &b_{13}&=-b_1c'\\
k_{20}&=q^{-1}b_6b_9c_2 &b_{14}&=-b'b_1c_2\\
k_{21}&=q^{-1}b_6b_{11}c_2+q^{-2}b_{10}c_1+q^{-3}b_7b_{13}c_2 &b_{15}&=q^{-3}c_3+c_2b_{13}\\
k_{23}&=q^{-1}b_6b_{12}c_2+q^{-2}b_{10}c_3+q^{-3}b_{10}b_{13}c_2&k_{22}&=q^{-1}b_6b_{10}c_2\\
k_{25}&=q^{-1}b_{12}c_1+b_{11}b_{13}c_2 &k_{24}&=q^{-1}b_6b_{14}c_2+q^{-2}b_{10}c_2+q^{-3}b_9b_{13}c_2\\
k_{27}&=q^{-1}b_{12}c_3+b_{12}b_{13}c_2&k_{26}&=q^{-1}b_{12}c_2+b_{13}b_{14}c_2\\
k_{29}&=b_{13}b_{15}+q^{-1}b_{14}&k_{28}&=q^{-3}b_{13}c_2+b_{14}d_2[1]\\
k_{31}&=q^{-3}b_5b_{13}c_2+q^{-3}b_5c_3+q^{-4}b_8+b_{10}d_2[2]&k_{30}&=b_3b_{13}c_2+q^{-1}b_{12}c_2\\
\end{align*}


\begin{minipage}{\textwidth}
\noindent S Launois \\
School of Mathematics, Statistics and Actuarial Science,\\
University of Kent\\
Canterbury, Kent, CT2 7FS,\\ UK\\[0.5ex]
email: S.Launois@kent.ac.uk \\

\noindent I Oppong\\
School of Mathematics, Statistics and Actuarial Science,\\
University of Kent\\
Canterbury, Kent, CT2 7FS,\\ UK\\[0.5ex]
email: isaac.oppong@aims.ac.rw \\

\end{minipage}

\end{document}